\definecolor{boo}{rgb}{1.0, 0.0, 1.0}
\newcommand{\D}{\mathbb D}
\newcommand{\R}{\mathbb R}
\newcommand{\E}{\mathbb E}
\def\i{\mathbf{i}}
\def\wt{\widetilde}
 \newcommand{\1}{{\bf 1}}
\def\H{\mathfrak H}
\newcommand{\F}{\mathscr F}
\newcommand{\e}{\varepsilon}
\newcommand{\Var}{ {\rm Var}}
\newcommand{\Cov}{{\rm Cov}}
\newcommand{\ep}{\varepsilon}
\newtheorem{theorem}{Theorem}[section]
\newtheorem{lemma}[theorem]{Lemma}
\newtheorem{proposition}[theorem]{Proposition}
\theoremstyle{definition}
\newtheorem{definition}[theorem]{Definition}
\theoremstyle{remark}
\newtheorem{remark}{Remark}
\numberwithin{equation}{section}
\begin{document}
\title[Spatial averages for the PAM driven by rough noise]{Spatial averages for the Parabolic Anderson model driven by rough noise}

\author[D. Nualart]{David Nualart$^{\ast,1}$} \thanks{David Nualart is supported by NSF Grant DMS 1811181.}
  
\author[X. Song]{Xiaoming Song$^{\ddagger,2}$}
  
\author[G. Zheng]{Guangqu Zheng$^{\ast,3}$}

 \maketitle
 \vspace{-0.5cm}

\begin{center}  {\small  University of Kansas$^\ast$ and Drexel University$^\ddagger$; 

\smallskip

Emails: nualart@ku.edu$^{1}$, xs73@drexel.edu$^2$,  zhengguangqu@gmail.com$^3$ } \end{center}

\begin{abstract} 
In this paper, we study  spatial averages for the  parabolic Anderson model in the Skorohod sense driven by  rough Gaussian noise, which is colored  in space and time. We include the case of a fractional noise  with Hurst parameters
$H_0$ in time and $H_1$ in space, satisfying  $H_0 \in (1/2,1)$, $H_1\in (0,1/2)$ and $H_0 + H_1 > 3/4$.
Our main result is a functional central limit theorem for the spatial averages.  
As an important ingredient of our analysis, we present a  Feynman-Kac formula that is new  for these values of the Hurst parameters.
\end{abstract}

\medskip\noindent
{\bf Mathematics Subject Classifications (2010)}: 	60H15; 60H07; 60G15; 60F05.

\medskip\noindent
{\bf Keywords:} Parabolic Anderson model;  fractional rough noise; Malliavin calculus; central limit theorem;  Wiener chaos expansion; Feynman-Kac formula; fourth moment theorems.

\allowdisplaybreaks

\section{Introduction}
 We consider the   Parabolic Anderson Model (PAM) on $\R_+\times \R$
\begin{align} \label{PAM1}
 \begin{cases}
  \dfrac{\partial u }{\partial t}  = \dfrac{1}{2}   \Delta u + u  \diamond \dot{W}, \\
  \quad\\
  u(0,\cdot)\equiv 1,  \end{cases}
 \end{align}
  where $\dot{W}(t,x)$ is a generalized centered Gaussian random field  with    covariance
\begin{equation} \label{cov}
\E\big[  \dot{W}(t,x) \dot{W}(s,y)\big] = \gamma_0(t-s) \gamma( x-y).
 \end{equation}
 The product in \eqref{PAM1}  is the Wick product and the mild solution is defined in the  Skorohod sense.
We assume  that  the covariance  (\ref{cov}) satisfies one of the following two (overlapping) sets of  conditions:

\smallskip

{\bf (H1)} $\gamma_0:\R \rightarrow \R_+$  is a nonnegative-definite  locally integrable function and  $\gamma$ is a tempered distribution, whose Fourier transform   $\mu$  admits a density $\varphi$ that satisfies the following \textit{modified   Dalang's condition}:
\begin{align} \tag{D}\label{mDc}
\int_{\R}\frac{\varphi(x)^2}{1 + x^2} dx < +\infty. 
\end{align}
 We also assume that $\varphi$ is {\it continuous at zero} with $\varphi(0) = 0$ and the following {\it concavity condition} is satisfied: $\exists \ \kappa_0\in (0,\infty)$ such that
\begin{align}\tag{C}
\varphi(x+y) \leq \kappa_0\big[  \varphi(x) + \varphi(y) \big]  ~ \text{for every $x,y\in\R$}. \label{concond}
\end{align}

{\bf (H2)} $\gamma_0(t) = |t |^{2H_0 - 2}$ for some $H_0 \in( 1/2,1)$ and $\varphi(z) =  | z |^{1-2H_1}$   for some $H_1\in(0,1/2)$ such that\footnote{In the fractional   case
we actually have $\varphi(z) = c(H_1) | z |^{1-2H_1}$, with $c(H_1)= \frac {\sin(\pi H_1)}{2\pi} \Gamma(2H_1+1)$, but to simplify the presentation we take $c(H_1)=1$.} $H_0 + H_1 > 3/4$.

The existence of a unique mild solution under condition {\bf (H1)} has been established in \cite{HLN16,HLN17}. Notice, that unlike many of the works on  the stochastic heat equation with spatial colored noise,  in {\bf (H1)}  the correlation function  $\gamma$  does not need to be a measure. Previously the random field solution theory for the stochastic heat equation  driven by a Gaussian noise white in time and rough  in space,  was restricted to the range $H_1\in(1/4, 1/2)$ (see, for instance,  \cite{BJQ,HHLNT1,HHLNT2}).
Condition $H_0 + H_1 > 3/4$ in  case {\bf (H2)}  breaks the barrier $1/4$, which  is required under {\bf (H1)} if
$\varphi(x)=|x|^{1-2H_1}$, and it has been observed in \cite{SSX19} that the random field solution exists  for the PAM when $H_0\in(1/2,1)$, $H_1\in (0,1/2)$  and $H_0 + H_1 > 3/4$.  

We are interested in deriving a functional  central limit theorem (CLT) for spatial averages of the form
\begin{align}\label{ATR}
A_t(R):=\int_{-R}^R \big[ u(t,x) - 1 \big] dx,
 \end{align}
 as $R \to \infty$, where $t\in[0,\infty)$.
 Using the chaos expansion of the solution (see  \eqref{soln}) and a 
chaotic CLT (see Theorem \ref{thm00})), we prove the following main result.

   \begin{theorem} \label{MainResult} Let $A_t(R)$ be defined as in \eqref{ATR} and suppose that one of the assumptions {\rm \textbf{(H1)} or \textbf{(H2)}} holds. Then, as $R\to \infty$,
 \begin{center}
$\left\{ \dfrac{1}{\sqrt{R}}A_t(R): t\in\R_+ \right\}$ converges in law to a centered Gaussian process $\mathcal{G}$ on $C(\R_+;\R)$,
 \end{center}
where for any $t_1, t_2\in\R_+$,
\begin{align*}%\label{L-Var}
\E\big[ \mathcal{G}_{t_1}  \mathcal{G}_{t_2}  \big] = 2 \int_{\R}     \E\Big[ \mathfrak{g}\big(  \mathcal{I}^{1,2}_{t_1, t_2}(z)  \big)    \Big]    dz,
\end{align*}
with $\mathfrak{g}(z) = e^z - z -1$ and  $\mathcal{I}^{1,2}_{t_1, t_2}(z) $ being the random variable  defined in Proposition {\rm \ref{prop1}}.
 \end{theorem}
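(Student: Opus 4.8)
The plan is to combine the Wiener chaos expansion of the Skorohod solution \eqref{soln} with the chaotic central limit theorem of Theorem~\ref{thm00}, splitting the statement into the convergence of the finite-dimensional distributions and tightness in $C(\R_+;\R)$. Writing $u(t,x)-1=\sum_{n\ge 1}I_n(f_{n,t,x})$, where $f_{n,t,x}$ is the symmetric kernel from \eqref{soln} built out of the heat semigroup, the spatial average becomes
\[
\frac{1}{\sqrt R}A_t(R)=\sum_{n\ge 1} I_n\!\left(\frac{1}{\sqrt R}\int_{-R}^R f_{n,t,x}\,dx\right)=:\sum_{n\ge 1}I_n(g_{n,t,R}).
\]
By the orthogonality of distinct chaoses, $\Cov\big(\tfrac1{\sqrt R}A_{t_1}(R),\tfrac1{\sqrt R}A_{t_2}(R)\big)=\sum_{n\ge 1} n!\,\langle g_{n,t_1,R},g_{n,t_2,R}\rangle_{\H^{\otimes n}}$, so everything is reduced to estimates of these kernels in the Hilbert space $\H^{\otimes n}$ associated with the covariance \eqref{cov}.

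For the finite-dimensional distributions I would apply Theorem~\ref{thm00} (via the Cram\'er--Wold device, to arbitrary linear combinations) to the vector $\big(\tfrac1{\sqrt R}A_{t_1}(R),\dots,\tfrac1{\sqrt R}A_{t_k}(R)\big)$. This reduces the CLT to two ingredients: (i) convergence of the covariance matrix to $\big(\E[\mathcal G_{t_i}\mathcal G_{t_j}]\big)_{i,j}$, and (ii) the decay, as $R\to\infty$, of the contraction norms $\|g_{n,t_i,R}\otimes_r g_{n,t_j,R}\|_{\H^{\otimes 2(n-r)}}$ for $1\le r\le n-1$ (the fourth-moment/$\Gamma$-type conditions), together with a tail bound in $n$, uniform in $R$, that justifies exchanging the limit with the infinite sum.

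For ingredient (i) the key observation is that the first chaos does not contribute in the limit: a direct Fourier computation shows $n!\,\|g_{1,t,R}\|_\H^2=\Var_1(A_t(R))/R \asymp R^{2H_1-1}\to 0$, precisely because $H_1<1/2$ (equivalently $\varphi(0)=0$), so the surviving part comes from $n\ge 2$. To sum these contributions I would pass through the Feynman--Kac representation announced in the abstract: expressing the kernels through two independent Brownian motions and performing the change of variables $z=x-y$ in the double spatial integral $\int_{[-R,R]^2}$, the factor $2R-|z|$ yields the prefactor $2$ and an integral over $z\in\R$ in the limit, while the sum over $n\ge 2$ assembles the Taylor series $\sum_{n\ge 2}z^n/n!=\mathfrak g(z)=e^z-z-1$ evaluated at the Brownian functional $\mathcal I^{1,2}_{t_1,t_2}(z)$ of Proposition~\ref{prop1}; the removed terms $-z$ and $-1$ correspond exactly to the asymptotically negligible first chaos and the centering $u-1$. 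This identifies the limit with $2\int_{\R}\E[\mathfrak g(\mathcal I^{1,2}_{t_1,t_2}(z))]\,dz$.

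Tightness in $C(\R_+;\R)$ I would obtain from the Kolmogorov continuity criterion: by hypercontractivity applied chaos by chaos, the $p$-th moment $\E\big[\big|\tfrac1{\sqrt R}(A_{t_2}(R)-A_{t_1}(R))\big|^p\big]$ is controlled by a power of the corresponding second moment, which I would bound by $C\,|t_2-t_1|^{\beta}$ uniformly in $R$ for a suitable $\beta>0$. The main obstacle throughout is the rough spatial structure: since $H_1<1/2$ the correlation $\gamma$ is a genuine distribution rather than a measure, so $\H$ carries a negative-order Sobolev norm and all inner products and contraction norms must be estimated in the frequency domain, where $\varphi(\xi)$ is unbounded. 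The condition $H_0+H_1>3/4$ is exactly what makes the relevant space-time integrals converge, and verifying the decay of the contractions in ingredient (ii) together with the uniform chaos-tail bound, within this sharp range, is the crux of the argument.
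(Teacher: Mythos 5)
Your proposal follows essentially the same route as the paper: chaos expansion of $A_t(R)$, negligibility of the first chaos from $\varphi(0)=0$, identification of the limiting covariance $2\int_{\R}\E[\mathfrak g(\mathcal I^{1,2}_{t_1,t_2}(z))]\,dz$ by assembling the chaoses into the exponential series via the Feynman--Kac moment formula and the change of variables $z=x-y$, the multivariate chaotic CLT of Theorem~\ref{thm00} for the finite-dimensional distributions, and tightness via hypercontractivity and Kolmogorov's criterion. The roadmap is accurate, though the items you defer as ``the crux'' --- the uniform-in-$R$ summability of the chaos tail (which the paper obtains from the exponential integrability of $\mathcal I^{1,2}_{t,t,\e}(z)$ and the estimates of Lemma~\ref{LEM:H12}), the vanishing of the contraction norms (Proposition~\ref{prop2}), and the uniform modulus bound $\|A_t(R)-A_s(R)\|_k\le C\sqrt{R}\,|t-s|^{1/2}$ --- are precisely where the bulk of the paper's work lies.
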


Since the work \cite{HNV18} of Huang, Nualart and Viitasaari, there have been a series of papers devoted to central limit theorems  for spatial averages of  stochastic partial differential equations. 
The authors of  \cite{HNV18} consider the  nonlinear stochastic heat equation driven by space-time white noise on $\R_+\times\R$ and they are able to provide a functional central limit theorem for  $A_t(R)$.  Their methodology begins with rewriting $A_t(R)=\delta(V_{t,R})$ as a Skorohod integral and  then appeal to the recent Malliavin-Stein approach \cite{bluebook} to obtain a quantitative CLT for $A_t(R)$ in total variation distance. The convergence of finite-dimensional distributions can be proved  using the multivariate Malliavin-Stein bound and the tightness is established  using Kolmogorov's criterion. Other key tools used in \cite{HNV18} are  Clark-Ocone  formula, Burkholder's inequality and they essentially rely on the assumption that the underlying Gaussian noise is white in time so as to render us a martingale structure. Soon later, the authors of  \cite{HNVZ19} consider the same equation with spatial dimension $d\geq 1$; while imposing that the Gaussian noise is white in time and  it has a spatial covariance given  by the Riesz kernel,  they establish a functional CLT and a quantitative CLT for  spatial averages. We also refer  interested readers to several other investigations on stochastic heat equations in   \cite{CKNP19, CKNP20, GL20, KNP20, PU20} and on stochastic wave equations in \cite{BNZ20, DNZ18, NZ20a}; these papers more or less follow the path paved by the work \cite{HNV18}, although the nature of the problems and the techniques differ.

In the above references, the underlying  Gaussian noise is always white in time, which is crucial to apply stochastic calculus  techniques.
These techniques are not available  in our framework and  we will make use of Wiener chaos expansions and Feynman-Kac formulas.
In \cite{NZ19BM}, that is  the closest work to the present paper, Nualart and Zheng consider the PAM \eqref{PAM1} on $\R_+\times\R^d$ with the correlation kernels  $\gamma_0$ and $\gamma$ satisfying the following conditions:
\begin{itemize}
\item[(i)]
$\gamma_0:\R\to \R_+$ is nonnegative-definite and  locally integrable.

\item[(ii)]  $\gamma$ is  a positive finite  measure, expressed as the Fourier transform of some nonnegative tempered measure $\mu$ that satisfies   Dalang's condition (\cite{Dalang99, HHNT15})
$\int_{\R^d} \frac{\mu(d\xi)}{1 + \| \xi\|^2}<\infty$, where $\| \cdot \|$ denotes the Euclidean norm.

\end{itemize}
   In \cite{NZ19BM},  the Gaussian fluctuation is established for 
\[
\int_{\{\| x\| \leq R\}} [u(t,x) -1 ]dx
\]
for each $t>0$ and, under the extra integrability condition on $\gamma_0$
\begin{equation}  \label{Int}
\int_0^{t_0}\int_0^{t_0} \gamma_0(s-t) s^{-\alpha} t^{-\alpha} dsdt <\infty,  \quad  \text{for some} \,\, \alpha\in (0,1/2), \,\, t_0>0,
\end{equation}
 the functional CLT holds as well; see \cite[Theorems 1.6, 1.9]{NZ19BM} for more precise statements.   Note also that in our Theorem \ref{MainResult}, we do not  need to assume  condition \eqref{Int} used  to guarantee the tightness.

Observe that unlike in the papers \cite{HNVZ19, DNZ18}, the variance order of $A_t(R)$ is $R$, which does not depend on the parameters of the covariance, for example,  the Hurst index $H_1$ in the setting \textbf{(H2)}. This is due to our assumption $\varphi(0)=0$ in both settings of  \textbf{(H1)} and \textbf{(H2)}, that forces the negligibility of the first  chaotic component of $A_t(R)$ in the limit, while the other chaoses contribute to the order $R$.   This situation is completely different from the case $H_1 >1/2$, where a nonchaotic behavior occurs.

 In what follows, we   briefly sketch the main steps of the proof of Theorem \ref{MainResult}.
The first step in proving Theorem \ref{MainResult} is  to show the order of the limiting variance and more generally, to establish the limit covariance structure.

\begin{proposition}\label{PROP:COV1}  Assume hypothesis  $\mathbf{(H1)}$ or $\mathbf{(H2)}$.  For $t_1, t_2\in\R_+$,
\begin{equation}\label{eqn-1}
\frac{1}{2R}  \Cov\big( A_{t_1} (R), A_{t_2}(R) \big) \xrightarrow{R\to\infty} \int_\R dz   \E\Big[   e^{ \mathcal{I}^{1,2}_{t_1, t_2}(z)} -\mathcal{I}^{1,2}_{t_1, t_2}(z) -1  \Big],
\end{equation}
where  $\mathcal{I}^{1,2}_{t_1, t_2}(z)$ is  defined in Proposition \ref{prop1}. Moreover if  $\Pi_p(\bullet)$ denotes the orthogonal projection   onto the $p$th Wiener chaos associated to $W$ $($see Section \ref{SEC2} for more details$)$, we have
  for $p=1$, 
\begin{equation}\label{eqn-2}
\lim\limits_{R\to\infty}\frac{1}{2R}\Cov\big( \Pi_1 A_{t_1} (R), \Pi_1 A_{t_2}(R) \big) = 0,
\end{equation}
 and for each $p\geq 2$,
\begin{equation}\label{eqn-3}
\lim\limits_{R\to\infty}\frac{1}{2R}  \Cov\big( \Pi_p A_{t_1} (R), \Pi_p A_{t_2}(R) \big)= \frac{1}{p!} \int_\R dz  \E\Big[  \left(\mathcal{I}^{1,2}_{t_1, t_2}(z)\right)^p  \Big].
\end{equation}
\end{proposition}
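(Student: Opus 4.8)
The plan is to exploit the Wiener chaos expansion of the solution. Writing $u(t,x)=1+\sum_{p\geq 1}I_p(f_{t,x,p})$ as in \eqref{soln} and using linearity of the integral, we have $A_t(R)=\sum_{p\geq 1}I_p(g_{t,R,p})$ with $g_{t,R,p}=\int_{-R}^R f_{t,x,p}\,dx$, so that $\Pi_p A_t(R)=I_p(g_{t,R,p})$. By the orthogonality of distinct chaoses and the isometry property of $I_p$,
\begin{align*}
\Cov\big(\Pi_p A_{t_1}(R),\Pi_p A_{t_2}(R)\big)=p!\,\langle g_{t_1,R,p},g_{t_2,R,p}\rangle_{\mathfrak H^{\otimes p}}=p!\int_{-R}^R\!\!\int_{-R}^R\langle f_{t_1,x,p},f_{t_2,y,p}\rangle_{\mathfrak H^{\otimes p}}\,dx\,dy,
\end{align*}
where $\mathfrak H$ is the Hilbert space associated with $W$. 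Since the covariance \eqref{cov} is translation invariant in space, $\langle f_{t_1,x,p},f_{t_2,y,p}\rangle_{\mathfrak H^{\otimes p}}$ depends on $x,y$ only through $z=x-y$; denote it $\Psi_p(z)$. Using the elementary identity $\int_{-R}^R\int_{-R}^R F(x-y)\,dx\,dy=\int_{-2R}^{2R}(2R-|z|)F(z)\,dz$ and dividing by $2R$, I would obtain
\begin{align*}
\frac{1}{2R}\Cov\big(\Pi_p A_{t_1}(R),\Pi_p A_{t_2}(R)\big)=p!\int_{-2R}^{2R}\Big(1-\frac{|z|}{2R}\Big)\Psi_p(z)\,dz,
\end{align*}
which, once $\Psi_p\in L^1(\R)$ is established, converges by dominated convergence to $p!\int_\R\Psi_p(z)\,dz$ as $R\to\infty$.

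The heart of the argument, yielding \eqref{eqn-3}, is to identify this limit with the prescribed moment. Using the explicit form of the chaos kernels (products of heat kernels along a time simplex) together with the Feynman-Kac representation underlying Proposition \ref{prop1}, the goal is to show the pointwise identity
\begin{align*}
(p!)^2\,\Psi_p(z)=\E\big[(\mathcal{I}^{1,2}_{t_1,t_2}(z))^p\big],
\end{align*}
where $\mathcal{I}^{1,2}_{t_1,t_2}(z)$ is the Brownian functional built from two independent Brownian motions carrying the two time horizons $t_1,t_2$ and the spatial shift $z$. Concretely, expanding the $p$-th power of $\mathcal{I}^{1,2}_{t_1,t_2}(z)$ and taking expectation reproduces, via the Markov property, exactly the $\mathfrak H^{\otimes p}$-pairing of the symmetrized kernels, the two combinatorial factors of $p!$ accounting for the symmetrization of $f_{t,x,p}$ and for the ordering of the $p$ integration times. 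Combined with the limit $p!\int_\R\Psi_p(z)\,dz$ from the first step, this gives $\frac{1}{p!}\int_\R\E[(\mathcal{I}^{1,2}_{t_1,t_2}(z))^p]\,dz$, which is \eqref{eqn-3}.

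For the first chaos, \eqref{eqn-2}, the same reduction applies, but the limit is now governed by $\varphi(0)$. Since $f_{t,x,1}(s,y)=p_{t-s}(x-y)\mathbf{1}_{[0,t]}(s)$ for the heat kernel $p_t$, passing to spatial Fourier variables gives
\begin{align*}
\frac{1}{2R}\Cov\big(\Pi_1 A_{t_1}(R),\Pi_1 A_{t_2}(R)\big)=\frac{1}{2R}\int_0^{t_1}\!\!\int_0^{t_2}\!\!\gamma_0(s-s')\int_\R e^{-\frac{(t_1-s)+(t_2-s')}{2}\xi^2}\,\frac{4\sin^2(R\xi)}{\xi^2}\,\varphi(\xi)\,\frac{d\xi}{2\pi}\,ds\,ds'.
\end{align*}
As $R\to\infty$ the normalized Fej\'er kernel $\frac{\sin^2(R\xi)}{\pi R\xi^2}$ concentrates at $\xi=0$, so the inner integral tends to the value of the remaining (smooth, Gaussian-damped) factors at $\xi=0$ times $\varphi(0)$; the standing assumption $\varphi(0)=0$ then forces the limit to vanish, which is \eqref{eqn-2}.

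Finally, \eqref{eqn-1} follows by summing the per-chaos limits: from $\frac{1}{2R}\Cov(A_{t_1}(R),A_{t_2}(R))=\sum_{p\geq 1}\frac{1}{2R}\Cov(\Pi_p A_{t_1}(R),\Pi_p A_{t_2}(R))$, interchanging the limit in $R$ with the sum and inserting \eqref{eqn-2}--\eqref{eqn-3} produces $\sum_{p\geq 2}\frac{1}{p!}\int_\R\E[(\mathcal{I}^{1,2}_{t_1,t_2}(z))^p]\,dz$, which equals $\int_\R\E[e^{\mathcal{I}^{1,2}_{t_1,t_2}(z)}-\mathcal{I}^{1,2}_{t_1,t_2}(z)-1]\,dz$ by the Taylor expansion $e^m-m-1=\sum_{p\geq 2}m^p/p!$. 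I expect the main obstacle to be precisely the uniform control needed to justify these last two interchanges and to guarantee $\Psi_p\in L^1(\R)$: under the rough spatial noise \textbf{(H2)} the spectral density $\varphi(\xi)=|\xi|^{1-2H_1}$ grows at infinity and $\gamma$ is only a distribution, so the $\mathfrak H^{\otimes p}$-norms must be estimated in Fourier space with care, and one must produce summable bounds on $\int_\R|\Psi_p(z)|\,dz$ (equivalently, on $\E[(\mathcal{I}^{1,2}_{t_1,t_2}(z))^p]$) allowing both dominated convergence in $R$ and the exchange of the summation over $p$ with the $z$-integral and the expectation. This is where the condition $H_0+H_1>3/4$ under \textbf{(H2)} (or Dalang's condition \eqref{mDc} under \textbf{(H1)}) must be used, to ensure finiteness of the relevant moments.
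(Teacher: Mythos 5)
Your outline follows the paper's proof essentially step for step: chaos expansion plus isometry, translation invariance reducing the double spatial integral to $\int_\R \Psi_p(z)\big(1-\tfrac{|z|}{2R}\big)\,dz$, the identification $(p!)^2\Psi_p(z)=\E\big[(\mathcal{I}^{1,2}_{t_1,t_2}(z))^p\big]$ via the Feynman--Kac moment formula, the Fej\'er-kernel concentration together with $\varphi(0)=0$ for the first chaos, and finally summation over $p$ with an exponential-series identification. All of these steps are correct and are exactly what the paper does.

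The genuine gap is that the entire quantitative core of the argument --- the bound
\[
\sum_{p\ge 2}\frac{1}{p!}\int_\R \E\big[\,|\mathcal{I}^{1,2}_{t_1,t_2}(z)|^{p}\,\big]\,dz<\infty
\]
(the paper's \eqref{above}) --- is only flagged as ``the main obstacle'' rather than proved, and it is this estimate that simultaneously gives $\Psi_p\in L^1(\R)$, dominated convergence in $R$, and the interchange of $\lim_R$ with $\sum_p$. Establishing it is most of the work: the paper first reduces odd moments to even ones via $e^{|x|}-|x|-1\le e^{x}+e^{-x}-2$, inserts a Gaussian mollifier $G(a,\cdot)$ in the $z$-variable to justify Fubini before letting $a\downarrow 0$, works with the regularized $\mathcal{I}^{1,2}_{t_1,t_2,\e}(z)$ and Fatou, and then invokes Lemma \ref{LEM:H12}. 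That lemma in turn uses the concavity condition \eqref{concond} to factor $\prod_j\varphi(\eta_j-\eta_{j-1})$ under \textbf{(H1)}, and the Hardy--Littlewood embedding $L^{1/H_0}(\R_+^n)\hookrightarrow\mathcal{H}^{\otimes n}$ in \eqref{F12} to decouple the temporal kernel under \textbf{(H2)}; the exponent bookkeeping there is precisely where $H_0+H_1>3/4$ enters (through $\mathfrak{h}>1/4$ and the Gamma-function growth that makes the series in $p$ converge despite the factor $(p!)^{2H_0-1}$). You correctly name the conditions that must be used, but without these devices the interchanges you invoke are not justified, so the proposal as written establishes the mechanism of \eqref{eqn-1}--\eqref{eqn-3} but not their proof.
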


 An important  ingredient to prove   Proposition \ref{PROP:COV1} is  the following  Feynman-Kac formula for  the  moments  of the solution.  
 In the case {\bf (H1)},   this formula   is essentially a reformulation of  Corollary 4.4 in \cite{HLN17} and the result for  {\bf (H2)}  is new, so we provide  in Section \ref{FKproof}  a unified proof for both cases. 

\begin{proposition}\label{prop1} In both cases {\bf (H1)} and  {\bf (H2)}, we have  for any $(s_i, t_i,x_i, y_i)\in (0,\infty)^2\times\R^2$, $i=1,\dots, k$ $(k\geq 2)$
\begin{align}
&\E\left[\prod_{i=1}^k u(t_i, x_i)\right] =  \E\left[  \exp\left(  \sum_{1\leq i<j \leq k} \mathcal{I}^{i,j}_{t_i, t_j}(x_i-x_j) \right) \right],   \label{prop:FK} 
\end{align}
where
\[
 \mathcal{I}^{i,j}_{t, s}(z) :=\int_0^{t}\int_0^{s} drdv\gamma_0(r-v) \int_{\R}d\xi  \varphi(\xi) e^{-\i (B^i_{t-r} - B_{s-v}^j + z) \xi} 
\]
is understood as the $L^p(\Omega)$-limit  $($for any $p\geq 1$$)$ of
\[
\int_0^{t}\int_0^{s} drdv\gamma_0(r-v) \int_{\R}d\xi e^{-\e \xi^2}  \varphi(\xi) e^{-\i (B^i_{t-r} - B_{s-v}^j + z) \xi} =:  \mathcal{I}^{i,j}_{t, s,\e}(z),
\]
with  $B^1,\dots, B^k$  being i.i.d. standard   Brownian motions on $\R$.  
Moreover, for each $i<j$ and for any $\lambda>0$,
\[
\sup\left\{ \E\Big[ \exp\big( \lambda \vert  \mathcal{I}^{i,j}_{t, s,\e}(z) \vert \big)\Big] : \e>0,~ z\in\R \right\}<\infty.
\]

\end{proposition}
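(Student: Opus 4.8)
The plan is to build everything from a regularized version of the model. Set $\varphi_\e(\xi) = e^{-\e\xi^2}\varphi(\xi)$, whose inverse Fourier transform $\gamma_\e := \gamma * p_\e$ (with $p_\e$ the centered Gaussian density of variance $2\e$) is a bounded smooth function. For this mollified noise the classical Feynman--Kac representation for moments,
\begin{align*}
\E\Big[\prod_{i=1}^k u_\e(t_i,x_i)\Big] = \E^B\Big[\exp\Big(\sum_{1\le i<j\le k}\mathcal{I}^{i,j}_{t_i,t_j,\e}(x_i-x_j)\Big)\Big],
\end{align*}
holds; in case \textbf{(H1)} this is essentially Corollary 4.4 of \cite{HLN17}, and for each fixed $\e>0$ in case \textbf{(H2)} it follows by the same chaos-expansion argument, since $\gamma_\e$ is a genuine bounded kernel. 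The whole proposition then reduces to two analytic facts about the real random variables $\mathcal{I}^{i,j}_{t,s,\e}(z)$ (real because $\varphi_\e$ is even): a moment bound uniform in $\e$ and $z$, and the $L^p(\Omega)$-convergence as $\e\downarrow 0$. Throughout I fix a pair $i<j$, so that $B^i,B^j$ are independent, and abbreviate $\mathcal{I}_\e = \mathcal{I}^{i,j}_{t,s,\e}(z)$.

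The heart of the matter, and the step I expect to be the main obstacle, is the uniform even-moment estimate
\begin{align*}
\E\big[\mathcal{I}_\e^{2n}\big]\le C^{2n}\,\big((2n)!\big)^{1-\theta}\qquad\text{for all }n\ge 1,\ \e>0,\ z\in\R,
\end{align*}
for some $\theta\in(0,1)$ and a constant $C=C(t,s)$. To prove it I would raise $\mathcal{I}_\e$ to the $2n$-th power, write it as a $6n$-fold integral in the variables $(r_a,v_a,\xi_a)_{a=1}^{2n}$, and take the expectation over the two independent Brownian motions. Since $\sum_a B^i_{t-r_a}\xi_a$ and $\sum_a B^j_{s-v_a}\xi_a$ are centered Gaussian, this produces the factor
\begin{align*}
\exp\Big(-\tfrac12\sum_{a,b}\xi_a\xi_b\,\min(t-r_a,t-r_b)-\tfrac12\sum_{a,b}\xi_a\xi_b\,\min(s-v_a,s-v_b)\Big),
\end{align*}
whose modulus kills all dependence on $z$ (because $|e^{-\i z\sum_a\xi_a}|=1$) and on $\e$ (because $e^{-\e\sum_a\xi_a^2}\le 1$). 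After sorting the time points and passing to increment variables, each quadratic form becomes a sum of squares, which lets me integrate out the $\xi$-variables against $\prod_a\varphi(\xi_a)$; the growth $\varphi(\xi)=|\xi|^{1-2H_1}$ is then tamed by the Gaussian factor and yields negative powers of the consecutive time gaps. The surviving temporal integral, carrying these gap factors together with the singular kernels $\gamma_0(r_a-v_a)=|r_a-v_a|^{2H_0-2}$, is precisely where $H_0+H_1>3/4$ is indispensable: it is the borderline condition guaranteeing convergence of the combined space-time integral and producing the subfactorial growth $((2n)!)^{1-\theta}$. In case \textbf{(H1)} the concavity condition \eqref{concond} plays the analogous role of splitting $\varphi$ across the increment variables after the change of variables.

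Granting this bound, the uniform exponential integrability follows at once. Using $e^{\lambda|x|}\le e^{\lambda x}+e^{-\lambda x}$ and the reality of $\mathcal{I}_\e$, one has
\begin{align*}
\E\big[e^{\lambda|\mathcal{I}_\e|}\big]\le \E\big[e^{\lambda\mathcal{I}_\e}\big]+\E\big[e^{-\lambda\mathcal{I}_\e}\big]=2\sum_{n\ge 0}\frac{\lambda^{2n}}{(2n)!}\,\E\big[\mathcal{I}_\e^{2n}\big]\le 2\sum_{n\ge 0}\frac{(C\lambda)^{2n}}{\big((2n)!\big)^{\theta}},
\end{align*}
and the last series converges for every $\lambda>0$ with a value independent of $\e$ and $z$, giving the claimed finite supremum. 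For the $L^p$-convergence I would run the same Fourier-moment computation on the difference $\mathcal{I}_\e-\mathcal{I}_{\e'}$: its integrand carries a factor of the form $(e^{-\e\xi^2}-e^{-\e'\xi^2})$ in each of the $\xi$-variables, which tends to zero pointwise and is dominated by the $\e=0$ integrand already shown to be integrable. Hence $\E[|\mathcal{I}_\e-\mathcal{I}_{\e'}|^p]\to 0$, so $(\mathcal{I}_\e)_{\e}$ is Cauchy in every $L^p(\Omega)$ and its limit defines $\mathcal{I}^{i,j}_{t,s}(z)$.

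Finally I would let $\e\downarrow 0$ in the regularized Feynman--Kac formula. The uniform exponential moment bound furnishes uniform integrability of $\exp\big(\sum_{i<j}\mathcal{I}^{i,j}_{t_i,t_j,\e}\big)$, while the $L^p$-convergence of each $\mathcal{I}^{i,j}_{t_i,t_j,\e}$ (hence of the finite sum in the exponent) allows passage to the limit on the right-hand side; convergence of the moments $\E[\prod_i u_\e(t_i,x_i)]\to\E[\prod_i u(t_i,x_i)]$ on the left is supplied by the solution theory under \textbf{(H1)}/\textbf{(H2)}. This yields \eqref{prop:FK} and completes the proof.
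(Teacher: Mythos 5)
Your overall architecture (mollify, invoke the moment formula for the mollified model, prove a uniform exponential-moment bound, show $L^p$-convergence of $\mathcal{I}^{i,j}_{t,s,\e}(z)$, and pass to the limit) parallels the paper's, but the step you yourself flag as ``the heart of the matter'' contains a genuine error in case \textbf{(H1)}. You claim a uniform bound $\E[\mathcal{I}_\e^{2n}]\le C^{2n}((2n)!)^{1-\theta}$ with a \emph{fixed} $\theta\in(0,1)$, and then sum $\sum_n (C\lambda)^{2n}/((2n)!)^{\theta}$ to get all exponential moments. Under \textbf{(H2)} such a subfactorial bound does hold (with $\theta=H_1$, coming from $\Gamma(2n\mathfrak{h}+1)^{2H_0}$ in the denominator), but under \textbf{(H1)} it does not: the modified Dalang condition together with the concavity condition only yields $\E[\mathcal{I}_\e^{2n}]\le (2n)!\,(8\kappa_0 C_N\Gamma_{t+s})^{2n}e^{tD_N/(2C_N)}$, i.e.\ genuinely factorial growth (take $\varphi$ growing like $|\xi|^{1/2}$ up to logarithms, which \eqref{mDc} permits). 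With $\theta=0$ your series converges only for $\lambda$ smaller than a fixed constant, which is not the claim. The paper's mechanism is different and essential here: the geometric constant $C_N=\int_{\{|\eta|\ge N\}}(1+\varphi+\varphi^2)\eta^{-2}d\eta$ can be made arbitrarily small by choosing $N$ large, so for each given $\lambda$ one picks $N=N(\lambda)$ with $16\lambda\kappa_0 C_N\Gamma_{t+s}<1$ and sums a geometric series. Without this $N$-tuning your argument does not close in case \textbf{(H1)}.

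Two further points. First, your plan to ``sort the time points and pass to increment variables'' so that ``each quadratic form becomes a sum of squares'' cannot be carried out for both Brownian motions simultaneously: the two quadratic forms $\Var\sum_a\xi_a B^i_{t-r_a}$ and $\Var\sum_a\xi_a B^j_{s-v_a}$ involve the same $\xi$-variables but incompatible orderings of the $r$'s and $v$'s, and the kernel $\gamma_0(r_a-v_a)$ couples the two time families. The paper decouples them by discarding one of the two variances in case \textbf{(H1)}, and by the Hardy--Littlewood embedding \eqref{F12} of $L^{1/H_0}$ into $\mathcal{H}^{\otimes n}$ followed by Cauchy--Schwarz in case \textbf{(H2)}; the latter is precisely where $H_0+H_1>3/4$ enters, not merely as ``a borderline integrability condition.'' Second, the convergence $\E[\prod_i u_\e(t_i,x_i)]\to\E[\prod_i u(t_i,x_i)]$ is not simply ``supplied by the solution theory'': the paper devotes its Step~3 to identifying the limit of the approximating solutions with the mild solution, via duality against exponential test functionals $F=\exp(W(g)-\frac12\|g\|_{\H}^2)$ and uniqueness of the mild solution. (The paper also mollifies in time with $\varphi_a$ as well as in space, precisely so that the approximate solution admits the explicit exponential representation \eqref{ECU9} from which the moment identity \eqref{quan1} is read off; your space-only mollification forces you to lean on an external moment formula whose validity for the mollified model in case \textbf{(H2)} you assert rather than prove.)
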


Once Proposition \ref{PROP:COV1} is proved, we  apply the  multivariate chaotic central limit theorem  to establish the convergence in law of finite-dimensional distributions (f.d.d.). This chaotic CLT 
is a consequence of the well-known  fourth moment theorems \cite{bluebook,NP05,PT05}.

\begin{theorem}[Multivariate chaotic CLT]\label{thm00}  Fix an integer $n\geq 1$ and consider a family $\big\{ A_R: R> 0 \big\}$ of centered  random vectors in $\R^n$ such that each component of $A_R=(A_{R, 1}, \ldots, A_{R,n})$ belongs to $ L^2(\Omega, \sigma\{W\}, \mathbb{P})$ and has the following chaos expansion
\[
\qquad\qquad\qquad A_{R,j} = \sum_{q=1}^\infty I_q^W (  g_{q, j,R}  )\quad \text{with $g_{q, j,R}\in \H^{\odot q}$},
\]
where $I^W_q$ denotes the $q$th-multiple stochastic integral with respect to $W$ $($see Section 2.2$)$.
Suppose the following conditions {\rm (a)-(d)} hold:
\begin{itemize}
\item[(a)]  $\forall i,j\in\{1, \ldots, n\}$ and  $\forall q\geq 1$, $ \E\big[ I_q^W (  g_{q, j,R}  ) I_q^W (  g_{q, i,R}  )\big]\xrightarrow{R\to+\infty}  \sigma_{i,j,q}$.
\medskip
\item[(b)]  $\forall i\in\{1, \ldots, n\}$, ${\displaystyle \sum_{q=1}^\infty \sigma_{i,i,q} < \infty}$.
\medskip
\item[(c)]   For any $1\leq r \leq q-1$,     
$
\big\| g_{q, i,R}\otimes_{r} g_{q, i,R} \big\|_{ \H^{\otimes (2q-2r)}} \xrightarrow{R\to +\infty} 0$.
 \medskip
\item[(d)]  $\forall i\in\{1, \ldots, n\}$, ${\displaystyle \lim_{N\to+\infty} \sup_{R>0} \sum_{q=N+1}^\infty   \E\big[   I_q^W (  g_{q, i,R}  )^2 \big]  = 0}$.

\end{itemize}
Then $A_R$ converges in law to $N(0, \Sigma)$ as $R\to+\infty$, where $\Sigma = \big( \sigma_{i,j} \big)_{i,j=1}^n$ is given by  $\sigma_{i,j} = \sum_{q=1}^\infty \sigma_{i,j,q}$.
\end{theorem}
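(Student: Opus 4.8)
The plan is to reduce Theorem~\ref{thm00} to the multivariate fourth moment theorem of Peccati and Tudor by a truncation argument, using condition~(d) to control the high-order chaoses uniformly in $R$. First I would introduce, for each integer $N\geq 1$, the truncated vectors $A_R^N=(A_{R,1}^N,\ldots,A_{R,n}^N)$ with $A_{R,j}^N:=\sum_{q=1}^N I_q^W(g_{q,j,R})$, and treat $N$ as fixed to begin with.

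For fixed $N$, I would assemble the enlarged random vector whose entries are the individual chaotic components $I_q^W(g_{q,i,R})$ for $1\leq i\leq n$ and $1\leq q\leq N$. Because multiple integrals of distinct orders are orthogonal in $L^2(\Omega)$, condition~(a) shows that the covariance matrix of this enlarged vector converges as $R\to\infty$. Combining the variance convergence from~(a) (the case $i=j$) with the contraction estimate~(c), the fourth moment theorem of \cite{NP05} gives that each scalar component $I_q^W(g_{q,i,R})$ converges in law to a centered Gaussian; the theorem of \cite{PT05} (see also \cite{bluebook}) then upgrades this componentwise convergence, together with the covariance convergence, to joint convergence of the enlarged vector to a Gaussian law. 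Applying the linear map that sums over $q\leq N$, and using once more the orthogonality of distinct chaoses to identify the limiting covariance, I conclude that $A_R^N$ converges in law to $G^N\sim N(0,\Sigma^N)$ as $R\to\infty$, where $\Sigma^N=\big(\sum_{q=1}^N\sigma_{i,j,q}\big)_{i,j=1}^n$.

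It remains to let $N\to\infty$. Condition~(b) ensures $\sum_{q=1}^\infty\sigma_{i,i,q}<\infty$, while Cauchy--Schwarz gives $|\sigma_{i,j,q}|\leq\frac12(\sigma_{i,i,q}+\sigma_{j,j,q})$, so the series defining $\Sigma$ converge absolutely and $\Sigma^N\to\Sigma$; hence $G^N$ converges in law to $G\sim N(0,\Sigma)$. On the other hand, orthogonality and condition~(d) give
\[
\sup_{R>0}\E\big[(A_{R,i}-A_{R,i}^N)^2\big]=\sup_{R>0}\sum_{q=N+1}^\infty\E\big[I_q^W(g_{q,i,R})^2\big]\xrightarrow{N\to\infty}0,
\]
so that by Chebyshev's inequality $\lim_{N\to\infty}\sup_{R>0}\mathbb{P}\big(|A_R-A_R^N|>\varepsilon\big)=0$ for every $\varepsilon>0$. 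Feeding these three facts into the standard approximation lemma for convergence in distribution (a triangular-array argument in the spirit of Billingsley) yields $A_R\Rightarrow N(0,\Sigma)$, as desired.

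I expect the only delicate point to be the correct application of the multivariate fourth moment theorem to the enlarged vector of chaotic components rather than to $A_R^N$ directly: the hypotheses of \cite{PT05} must be verified componentwise, which is precisely what~(a) and~(c) supply, and the orthogonality of distinct Wiener chaoses is what lets one read off the limiting covariance $\Sigma^N$ from the within-chaos data $\sigma_{i,j,q}$. The remaining steps---the Cauchy--Schwarz bookkeeping for $\Sigma$ and the approximation argument controlled by~(d)---are routine.
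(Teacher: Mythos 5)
Your proof is correct and is exactly the standard argument behind this result: the paper itself gives no proof but refers to \cite{CNN, bluebook, HN05}, where the theorem is established by the same truncation scheme---Peccati--Tudor applied to the enlarged vector of chaotic components for each fixed $N$, followed by the $N\to\infty$ passage controlled by condition (d) via the converging-together lemma. No gaps; the Cauchy--Schwarz step ensuring absolute convergence of $\sum_q \sigma_{i,j,q}$ and the uniform $L^2$ tail bound are handled correctly.
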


We refer to \cite{CNN, bluebook, HN05} for more details  on this result and to  Section  2.2 for the definition of  
the Hilbert space $\H$ and the $r$-contraction $\otimes_r$ in our setting.
 As the last step in the proof of Theorem \ref{MainResult}, we will establish the tightness  by using Kolmogorov's criterion. The key tool is the hypercontractivity property of the Ornstein-Uhlenbeck generator, see \eqref{HYPER}.

\begin{remark}
Theorem \ref{MainResult} and Proposition \ref{prop1} also hold under Hypothesis {\bf (H1)} if $\gamma_0=\delta_0$, that means, if the noise is white in time. The proofs are   similar and  we omit the details. In this framework, the random variables 
$\mathcal{I}^{i,j}_{t, s,\e}(z)$ defined in Proposition \ref{prop1} would have the expression
\[
\int_0^{t\wedge s}  dr \int_{\R}d\xi e^{-\e \xi^2}  \varphi(\xi) e^{-\i (B^i_{t-r} - B_{s-r}^j + z) \xi} .
\]
\end{remark}

The rest of this article is organized as follows: Section \ref{SEC2} is devoted to preliminary knowledge that is required for later proofs, and  we prove Proposition \ref{PROP:COV1} in Section \ref{SEC3}; we show the f.d.d. convergence and tightness respectively in Section \ref{SEC4} and Section \ref{SEC5}; the last section contains the proof of  Feynman-Kac formula.

\section{Preliminaries}\label{SEC2} 

We first introduce some handy  notation here.

\subsection{Notation} For $r\in \mathbb{N}$ and $\pmb{x_r}=(x_1,\dots, x_r)$, we write   $d\pmb{x_r}=dx_1\dots dx_r$, $\mu(d\pmb{x_r})=\mu(dx_1)\dots\mu(dx_r)$ and $\tau(\pmb{x_r})=x_1+\dots+x_r$. For integers $1\leq r<p$, we write $(\xi_1,\dots, \xi_p)=\pmb{\xi_p}=(\pmb{\xi_r}, \pmb{\eta_{p-r}})$ with $ \pmb{\xi_r}=(\xi_1, \dots, \xi_r)$ and $ \pmb{\eta_{p-r}}=(\xi_{r+1}, \dots, \xi_p)$.  

 For any $p\in\mathbb{N}$, $\mathfrak{S}_{p}$ denotes the set of permutations of $\{1, 2, \dots, p\}$, and for any $\pmb{s_p}=(s_1, \dots, s_p)$ we write   $\pmb{s^\sigma_p}=(s_{\sigma(1)}, \dots, s_{\sigma(p)})$.

 For any interval $I$, we use $|I|$ to denote its length.

For any $t>0$ and $m\in \mathbb{N}$,  put $\Delta_m(t):= \big\{ \pmb{r_m} \in\R_+^m: t>r_1> \cdots> r_m>0  \big\} $ and $\text{SIM}_m(t) : = \big\{ \pmb{w_m} \in\R_+^m: w_1+\cdots +w_m\leq t  \big\} $.

For any $p>0$ and any random variable $X\in L^p(\Omega)$, we write $\Vert X\Vert_p=\big(\E[ |X|^p]\big)^{1/p}$.
 
\subsection{Wiener chaos expansion} 
For $\phi, \psi \in C_c^\infty(\R_+\times\R)$, we define 
\begin{align*}%\label{SpaceH}
\langle \phi,\psi \rangle_\H  = \int_{\R_+^2}dsdt \gamma_0(s-t)  \int_{\R} d\xi \varphi(\xi) \F \phi(s, \xi) \F \psi(t, -\xi) 
\end{align*}
where 
$$
 \F \phi(s, \xi)  : = \int_{\R} e^{-\i x \xi} \phi(s, x) dx
 $$ 
is the Fourier transform with respect to the spatial variable only. Due to our assumptions ({\bf (H1)} or {\bf (H2)}), the above functional  $(\phi, \psi)\longmapsto\langle \phi,\psi \rangle_{\H}$ defines an inner product, under which $C_c^\infty(\R_+\times\R)$ can be extended to a Hilbert space, denoted by $\H$.  

We can view the noise $W$ as an isonormal Gaussian process over $\H$, that is, $\{W(h):  h\in\H\}$ is a centered Gaussian family with  
\[
\E\big[ W(\phi) W(\psi)  \big] = \langle \phi,\psi \rangle_\H,
\]
for any $\phi, \psi\in\H$. For any $n\in \mathbb{N}$, we denote by $\H^{\otimes n}$ the $n$th tensor product of $\H$ and by $\H^{\odot n}$ the symmetric subspace of $\H^{\otimes n}$. 

It is a well-known fact that $L^2(\Omega, \sigma\{W\}, \mathbb{P})$ can be decomposed into an infinite  orthogonal sum:
\[
L^2(\Omega, \sigma\{W\}, \mathbb{P}) = \bigoplus_{p= 0}^\infty \mathbb{C}_p,
\]
where $\mathbb{C}_p$ is called the $p$th Wiener chaos and  it is the $L^2(\Omega)$-completion of the set
\[
\big\{ H_p\big(W(\psi) \big): \| \psi\|_\H =1 \big\},
\]
with $H_p(x) = (-1)^p e^{x^2/2} \frac{d^p}{dx^p} e^{-x^2/2} $ denoting the $p$th Hermite polynomial.  For any integer $p\ge 1$, the multiple integral $I_p^W$ of order $p$ is a bounded linear operator from $\mathfrak{H}^{\otimes p}$ onto $\mathbb{C}_p$ uniquely characterized by the following conditions: 
\begin{itemize}
\item[(i)] Given any orthogonal   unit vectors $e_1, \dots  ,e_k\in\H$ $(k\geq 2)$ and any nonnegative integers $n_1, \dots, n_k$ such that $n_1 + \cdots + n_k = p$, it holds that
\[
I_p^W\Big(  e_1^{\otimes n_1} \otimes e_2^{\otimes n_2} \otimes \cdots \otimes e_k^{\otimes n_k}   \Big) = \prod_{i=1}^k H_{n_i}\big(W(e_i) \big).
\]

\item[(ii)] For any $f\in \mathfrak{H}^{\otimes p}$, $I_p^W(f) = I_p^W(\wt{f}) $, where $\wt{f} \in\mathfrak{H}^{\odot p}$ denotes the symmetrization of $f$.
\end{itemize}
For $p=0$, $\mathbb{C}_0 = \R$ and $I_0$ is the identity. 
 The following  isometry  property holds for any $f\in \mathfrak{H}^{\otimes p}$ and $g\in \mathfrak{H}^{\otimes q}$ (see \cite{bluebook, Nualart} for more details):
\[
\E\big[ I_p^W(f)  I_q^W(g) \big] =  p! \big\langle \wt{f}, \wt{g} \big\rangle_{\H^{\otimes p} } \mathbf{1}_{\{p=q \}}.
\] 
  
 Another important property of Wiener chaos is the following consequence of \emph{hypercontractivity} (see \emph{e.g.} \cite[Corollary 2.8.14]{bluebook}): If  $F\in \mathbb{C}_p$ for $p\geq 1$, then for any $k\geq 2$, 
 \begin{align}\label{HYPER}
 \big\| F \big\|_k \leq (k-1)^{p/2} \big\| F \big\|_2.
 \end{align}
 
 Let us introduce the contractions appearing in condition (c) of Theorem \ref{thm00}.
   For integers $p,q\geq 1$, the $r$-contraction   $f\otimes_r g$ for $f\in\H^{\otimes p}, g\in\H^{\otimes q}$ and $1\leq r\leq p\wedge q$ is the element of $\H^{\otimes (p+q-2r)}$ defined by
\[
 f\otimes_r g=   \sum_{i_1, \dots, i_r =1} ^\infty  \langle f, e_{i_1} \otimes \cdots  \otimes e_{i_r} \rangle_{\H^{\otimes r}}  \langle g, e_{i_1} \otimes \cdots \otimes e_{i_r} \rangle_{\H^{\otimes r}} ,
 \]
 where $\{e_i,     i\ge 1\}$ is a complete orthonormal system in $\H$.
In the particular case where $f$ and $g$ are locally integrable functions, the contraction   $f\otimes_r g$ has the following expression
\begin{align*}
& (f\otimes_r g)\big( \pmb{t_{p-r}}, \pmb{\xi_{p-r}} ;  \pmb{s_{q-r}},  \pmb{\eta_{q-r}} \big) \\
&= \int_{\R_+^{2r}\times\R^{r}}  (\F_rf)\big( \pmb{t_{p-r}}, \pmb{\xi_{p-r}}, \pmb{a_r}, \pmb{\zeta_r} \big) (\F_rg)\big( \pmb{s_{q-r}}, \pmb{\eta_{q-r}}, \pmb{b_r}, -\pmb{\zeta_r} \big) \prod_{j=1}^r \gamma_0(a_j-b_j) \varphi(\zeta_j)da_jdb_jd\zeta_j,
\end{align*}
where $\F_r$ denotes the Fourier transform with respect to the $r$ space variables. We refer readers to  the appendix of \cite{bluebook} for more explanations on the contractions.

\subsection{Malliavin calculus}
 We will denote by $D$ the derivative operator in the sense of
Malliavin calculus. That is, if $F$ is a smooth and cylindrical
random variable of the form
\begin{equation*}
F=f(W(h_1),\dots,W(h_n))\,,
\end{equation*}
with $h_i \in \H$ and  $f \in C^{\infty}_c (\R^n)$, then $DF$ is the
$\H$-valued random variable defined by
\begin{equation*}
DF=\sum_{j=1}^n\frac{\partial f}{\partial
x_j}(W(h_1),\dots,W(h_n))h_j\,.
\end{equation*}
The Sobolev space $\mathbb{D}^{1,2}$ is
the closure of the space of smooth and cylindrical random variables
under the norm
\[
\|DF\|_{1,2}=\sqrt{ \E [F^2]+\E[\|DF\|^2_{\H}]}\,.
\]
We denote by $\delta$ the adjoint of the derivative operator given
by the duality formula
\begin{equation*}%\label{dual}
\E ( \delta (u)F ) =\E ( \langle DF,u
\rangle_{\H}  ) , 
\end{equation*}
for any $F \in \mathbb{D}^{1,2}$ and any  $u \in L^2(\Omega;
\H)$ in the domain of $\delta$. The operator $\delta$ is
also called the {\it Skorohod integral} because in the case of the
Brownian motion, it coincides with an extension of the It\^o
integral introduced by Skorohod. We refer to \cite{Nualart}
for a detailed account of the Malliavin calculus with respect to a
Gaussian process.

If $F\in \mathbb{D}^{1,2}$ and $h$ is a deterministic element of $\H$, then $Fh$ is Skorohod
integrable and, by definition, the
Wick product  $F\diamond W(h)$ equals the Skorohod integral of $Fh$, that is, 
\begin{equation*}
\delta (Fh)=F\diamond W(h).  %\label{Wick}
\end{equation*}

\subsection{Mild solution} 
 Now we define 
\[
\mathcal F_t = \sigma\big\{W(\phi) : \phi\in C^\infty_c(\R_+\times\R) \text{  has support contained in } [0,t]\times \R\big\}\vee \mathcal N,
\]
where $\mathcal N$ denotes the collection of null sets.  This gives us a filtration $\mathbb{F}:=\{\mathcal{F}_t: t\geq 0 \}$.

For each $t>0$ and $x\in \R$, $G(t,x)=(2\pi t)^{-1/2}   e^{-x^2/(2t)}$ denotes the  fundamental solution of the heat equation.

\begin{definition}
An $\mathbb{F}$-adapted random field $u=\{u(t,x): t\ge 0, x\in\R\}$ is a mild 
  solution to
\eqref{PAM1}   if for all $(t,x)\in \R_+\times\R$,  
the process $\{ G(t-s, x-y) u(s,y) \mathbf{1}_{[0,t]}(s): (s,y) \in \R_+\times \R\}$ is Skorohod integrable and    following integral equation holds
\begin{equation}\label{sko-sol}
u(t,x)=1+\int_0^t\int_{\R} G(t-s, x-y) u(s,y) W(ds,dy), 
\end{equation}
where the stochastic integral   is understood in the Skorohod sense.
\end{definition}

From the results of \cite{HLN16,HLN17,NZ19BM}, under  hypotheses {\bf (H1)} or {\bf (H2)}, there exists a unique mild solution to equation \eqref{PAM1},
 which has the following Wiener chaos expansion 
 \begin{align}\label{soln}
 u(t,x) = 1 + \sum_{n= 1}^\infty I_n^W(f_{t,x,n}).
 \end{align}
where  $ f_{t,x,n} \in\H^{\odot n}$  is given by 
\begin{align*} %\label{form-f}
 f_{t,x,n}(\pmb{s_n}, \pmb{y_n}) = \frac 1 {n!} G(t-s_{\sigma(1)}, x - y_{\sigma(1)})   \prod_{i=1}^{n-1} G(s_{\sigma(i)}-s_{\sigma(i+1)},  y_{\sigma(i) }-y_{\sigma(i+1) }   ),
 \end{align*}
 with $\sigma\in\mathfrak{S}_{n}$ such that $t > s_{\sigma(1)} > \ldots > s_{\sigma(n)} > 0$; we refer readers to \cite{HHNT15, HLN17, SSX19} for the rigorous derivation of \eqref{soln}.

  Then the core object in this paper $A_t(R)$ has the following Wiener chaos expansion
 \[
 A_t(R) = \sum_{p=1}^\infty I_p^W\left(  \int_{-R}^R  f_{t,x,p}dx \right),
 \]
 as a consequence of stochastic Fubini.

In the end of this section, let us state a useful embedding result that is a consequence of the Hardy-Littlewood inequality. 

\subsection{Embedding of $ L^{1/H_0}(\R_+^n)$ into $\mathcal{H}^{\otimes n}$}  Let $\mathcal{H}$ be the Hilbert space associated to the fractional Brownian motion with Hurst parameter $H_0\in(1/2, 1)$. That means, $\mathcal{H}$ is the closure of  $C^\infty_c(\R_+)$ under the seminorm
\[
\langle f, g \rangle_{\mathcal{H}} = \int_{\R_+} f(s) g(t) | s-t| ^{2H_0-2} dsdt.
\]
We have the continuous embedding
$  L^{1/H_0}(\R_+) \hookrightarrow  \mathcal{H} $. More precisely, for any
    $f,g:\R_+\to\R$ with $\| f \| _{L^{1/H_0}(\R_+) } <\infty$ and $\| g \| _{L^{1/H_0}(\R_+) } <\infty$, we have
   \[
    | \langle f,g \rangle _{\mathcal{H}}| \leq C_{H_0} \| f \| _{L^{1/H}(\R_+)}  \| g \| _{L^{1/H}(\R_+)},
   \]
   where the constant $C_{H_0}$  only depends on $H_0$ (see, for instance, \cite{MMV01}). Then, by iteration, one can easily show that given two functions
   $f,g \in   L^{1/H_0}(\R^n_+)$, then
      \begin{align}  \notag
    | \langle f,g \rangle _{\mathcal{H}^{\otimes n}} |  & =  \left| \int_{\R_+^{2n}}ds_1 \cdots ds_n    dt_1 \cdots dt_n  f(s_1,\dots, s_n)g(t_1,\dots , t_n)  \prod_{j=1}^n | s_j - t_j |^{2H_0-2} \right| \\  \label{F12}
    & \le C_{H_0}^n  \| f \|_{L^{1/H_0}(\R_+^n) }    \| g \|_{L^{1/H_0}(\R_+^n) }.
    \end{align}

      \medskip
 
 \section{Limiting covariance structure} \label{SEC3}

 For any nonnegative measurable function $\Theta$ and for any integer $p\ge 1$, we define the following two quantities:
   \begin{align}
 K_{1,p}(\Theta,t) &= \int_{[0,t]^p} d\pmb{s_p}    \int_{\R^{p}}  \mu(d\pmb{\xi_p})  \Theta( \tau(\pmb{\xi_p}))   \exp\left[   -\frac 12 \Var  \sum_{j=1}^p   B_{s_j}   \xi_j     \right]     \label{K1} \\
 K_{2,p}(\Theta,t)&= \int_{[0,t]^p} d\pmb{s_p}   \left(  \int_{\R^{p}}  \mu(d\pmb{\xi_p})  \Theta( \tau(\pmb{\xi_p}))   \exp\left[   - \Var  \sum_{j=1}^p   B_{s_j}   \xi_j      \right]    \right)^{\frac 1 {2H_0}},  \label{K2}
 \end{align}
 where,  here and along the paper, $B$ denotes a standard real-valued Brownian motion.  
 
 Notice that integrating on the simplex $\Delta_p(t)$ and making the change of variables $\eta_j = \xi_1 + \cdots +\xi_j$ and
 $s_{j-1}-s_j=w_j$, $j=1,\dots, p$, with the convention $\eta_0=0$ and $s_0=0$, we obtain
  \begin{align} 
 K_{1,p}(\Theta,t)&= p!\int_{\text{SIM}_p(t)} d\pmb{w_p}    \int_{\R^{p}}  d\pmb{\eta_p} \Theta(  \eta_p)    e^{-\frac 12\sum_{k=1}^p w_k \eta _k^2}
 \prod_{j=1}^p \varphi(\eta_j-\eta_{j-1})    \label{F2} \\
K_{2,p}(\Theta,t)&= p!\int_{\text{SIM}_p(t)} d\pmb{w_p}    \left( \int_{\R^{p}}  d\pmb{\eta_p} \Theta(  \eta_p)    e^{-\sum_{k=1}^p w_k \eta_k^2}
 \prod_{j=1}^p \varphi(\eta_j-\eta_{j-1})  \right)^{\frac 1 {2H_0}}. \label{F7}
  \end{align}
 
 The next technical lemma will play a fundamental role along the paper.
  \begin{lemma}\label{LEM:H12} Under hypothesis   $\mathbf{(H1)}$, we have, for any  $N>0$,
  \begin{equation} \label{F4}
   K_{1,p}(\Theta,t) \le p! t \left(  \int_{\R} \Theta (x) (1+ \varphi(x))dx\right)  \frac 12 (8\kappa_0C_N)^{p-1} \exp\left(  \frac{tD_N }{ 2C_N}   \right),
  \end{equation}
  where the constants   $C_N$ and $D_N$, given by 
  \begin{align}\label{quan:CDN1}
C_N:= \int_{\{| \eta | \geq N\}} \frac{1+\varphi(\eta) + \varphi(\eta)^2}{\eta^2}d\eta \quad  {\rm and } \quad D_N: =  \int_{\{| \eta | \leq N\}} (1+\varphi(\eta) + \varphi(\eta)^2)d\eta,
\end{align}
are finite for each $N>0$ due to the modified Dalang's condition\footnote{ Condition (D) implies $\int_{\R} \frac { \varphi(x)+ \varphi(x)^2}{1+ x^2}dx <\infty$.} \eqref{mDc}.   Moreover 
\begin{equation} \label{F1}
K_{1,p}(1,t) \le \frac {p!}{2\kappa_0} (8\kappa_0C_N)^{p} \exp\left(  \frac{tD_N }{ 2C_N}   \right).
\end{equation}
Under hypotheses  $\mathbf{(H2)}$, we have
\begin{equation} \label{F6}
   K_{2,p}(\Theta,t) \le p!  \left(  \int_{\R} \Theta (x) (1+ |x|^{1-2H_1})dx\right)  \frac{C_1^p  \, \left(t^{p\mathfrak{h} +  \frac{1-H_1}{2H_0}} \vee t^{p\mathfrak{h} + \frac{1}{4H_0} } \right)}{\Gamma(p\mathfrak{h} +  \frac{1-H_1}{2H_0}  )\wedge\Gamma( p\mathfrak{h} + \frac{1}{4H_0}  ) },
  \end{equation}
 where $ \mathfrak{h}=\frac{2H_0+H_1-1}{2H_0}>\frac 14$; moreover,
 \begin{equation} \label{F8}
 K_{2,p} (1,t) \le p! C_2^p \frac{ t^{p \mathfrak{h}}}  {\Gamma (p \mathfrak{h} +1)},
 \end{equation}
 where $C_1$ and $C_2$ are  constants  depending only on $H_0$ and  $H_1$.
  \end{lemma}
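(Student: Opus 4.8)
The plan is to treat the two hypotheses separately, but in both cases to work from the simplex representations \eqref{F2} and \eqref{F7}, in which the spatial variables are the cumulative sums $\eta_j=\xi_1+\cdots+\xi_j$ (with $\eta_0=0$) and the temporal variables are the gaps $w_j$, so that the domain is $\mathrm{SIM}_p(t)$ and $\Theta$ depends only on the last variable $\eta_p$. The common scheme is to first carry out the $\pmb{\eta_p}$-integral for fixed gaps, reducing the chained product $\prod_j\varphi(\eta_j-\eta_{j-1})$ to a product of one-dimensional integrals, and only then to integrate the resulting gap-dependence over the simplex. In both regimes the chain is disentangled through the (sub)additivity in \eqref{concond}, applied in the symmetric form $\varphi(\eta_j-\eta_{j-1})\le\kappa_0[\varphi(\eta_j)+\varphi(\eta_{j-1})]$.

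For \textbf{(H1)} [estimates \eqref{F4} and \eqref{F1}] the engine is an elementary split of every Gaussian factor at level $N$. On the tail $\{|\eta_k|\ge N\}$ I would keep $e^{-\frac12 w_k\eta_k^2}$ and, once the spatial integrals are done, integrate it in the gap over $[0,\infty)$ to manufacture the decay $\int_0^\infty e^{-\frac12 w_k\eta_k^2}\,dw_k=2/\eta_k^2$ (this is what avoids any non-integrable $w_k^{-1}$); on the core $\{|\eta_k|<N\}$ I would bound $e^{-\frac12 w_k\eta_k^2}\le 1$ and pay for the corresponding gaps by the simplex volume, which for $j$ core variables is at most $t^{j}/j!$. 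After decoupling the chain, each variable $\eta_k$ inherits the weight $1$, $\varphi$ or $\varphi^2$ according to whether it is touched by neither, one, or both of its neighbouring factors; integrated against $\eta_k^{-2}$ on the tail this is controlled by $C_N$, and integrated against Lebesgue measure on the core by $D_N$, and these are finite thanks to \eqref{mDc}, which is exactly why \eqref{quan:CDN1} carries the triple $1+\varphi+\varphi^2$. Summing over the $2^p$ tail/core configurations, the core terms (with $j$ cores) assemble, via the factors $t^{j}/j!$, into the truncated exponential bounded by $\exp(tD_N/(2C_N))$, while the tail terms produce the geometric factor, the constants $\kappa_0$ and the doubling from \eqref{concond} being absorbed into $8\kappa_0$. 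For \eqref{F4} the variable $\eta_p$ is singled out because it carries $\Theta$: I would not extract decay from it, bounding its integral by $\int_\R\Theta(x)(1+\varphi(x))\,dx$ with one factor of $t$ from its own gap, which accounts for the exponent $p-1$; for \eqref{F1}, where $\Theta\equiv 1$, the last variable is treated like the others and contributes the extra $8\kappa_0C_N$.

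For \textbf{(H2)} [estimates \eqref{F6} and \eqref{F8}] the extra power $1/(2H_0)$ in \eqref{F7} is the footprint of the embedding $L^{1/H_0}(\R_+^n)\hookrightarrow\mathcal H^{\otimes n}$ and the inequality \eqref{F12}, through which the temporal kernel $|r-v|^{2H_0-2}$ has already been turned into an $L^{1/H_0}$-norm. I would first evaluate the inner spatial integral: decoupling $\prod_j|\eta_j-\eta_{j-1}|^{1-2H_1}$ by $|\eta_j-\eta_{j-1}|^{1-2H_1}\le|\eta_j|^{1-2H_1}+|\eta_{j-1}|^{1-2H_1}$ and using $\int_\R e^{-w_k\eta^2}|\eta|^{\alpha}\,d\eta=c_\alpha w_k^{-(\alpha+1)/2}$, each variable produces a negative power of its gap. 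Raising to the power $1/(2H_0)$ and integrating over $\mathrm{SIM}_p(t)$ leads to a Dirichlet integral
\[
\int_{\mathrm{SIM}_p(t)}\prod_{k=1}^p w_k^{\mathfrak h-1}\,d\pmb{w_p}=t^{p\mathfrak h}\,\frac{\Gamma(\mathfrak h)^p}{\Gamma(p\mathfrak h+1)},\qquad \mathfrak h=\frac{2H_0+H_1-1}{2H_0},
\]
which yields \eqref{F8} after absorbing $\Gamma(\mathfrak h)^p$ and $c_\alpha$ into $C_2^p$; here $H_0>1/2$ and $H_0+H_1>3/4$ force $\mathfrak h>1/4$, the fact used later for summability. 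The decisive point is that the most singular term, in which an interior $\eta_k$ inherits $|\eta_k|^{1-2H_1}$ from both neighbouring factors and so contributes $w_k^{-(3-4H_1)/2}$, remains integrable over the simplex precisely because $H_0+H_1>3/4$ makes the exponent $1-(3-4H_1)/(4H_0)$ positive. For \eqref{F6} the $\Theta$-variable is again isolated and, depending on which part of $\int_\R\Theta(x)(1+|x|^{1-2H_1})\,dx$ is used, its gap carries one of two powers; carried through the Dirichlet integral these give the two exponents $p\mathfrak h+\frac{1-H_1}{2H_0}$ and $p\mathfrak h+\frac{1}{4H_0}$ together with the matching Gamma functions, whence the maximum and minimum in \eqref{F6}.

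The step I expect to be the main obstacle is common to both regimes: disentangling the chained product $\prod_j\varphi(\eta_j-\eta_{j-1})$ by \eqref{concond} without destroying the Gaussian decay that makes each one-dimensional integral converge, and then keeping the resulting combinatorics organized. Under \textbf{(H1)} this means tracking the $2^p$ tail/core splittings so that the core terms really do sum to $\exp(tD_N/(2C_N))$ and the tail terms to the clean geometric factor; under \textbf{(H2)} it means controlling every term of the expansion of $\prod_j(|\eta_j|^{1-2H_1}+|\eta_{j-1}|^{1-2H_1})$, the binding one being exactly the case pinned down by the hypothesis $H_0+H_1>3/4$.
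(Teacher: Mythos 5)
Your proposal is correct and follows essentially the same route as the paper: the chain $\prod_j\varphi(\eta_j-\eta_{j-1})$ is decoupled via \eqref{concond} into the $2^{p-1}$ configurations $\pmb{\beta}\in\mathcal{A}_p$ (which is exactly why $C_N$ and $D_N$ carry $1+\varphi+\varphi^2$), the tail/core split at level $N$ with $\int_0^\infty e^{-\frac12 w\eta^2}dw=2/\eta^2$ reproduces the paper's binomial sum and its bound $\exp(tD_N/(2C_N))$ under \textbf{(H1)}, the Dirichlet integral over $\mathrm{SIM}_{p-1}(t)$ with exponents $\alpha_j=-\frac{1+\beta_j(1-2H_1)}{4H_0}\in(-1,0)$ handles \textbf{(H2)}, and in both regimes the $\Theta$-variable is isolated exactly as in the paper's recursion \eqref{F5}. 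The only minor imprecision is that your displayed Dirichlet formula treats the homogeneous configuration $\beta_k\equiv1$, but since $\sum_k\beta_k=p$ for every $\pmb{\beta}\in\mathcal{A}_p$ the total exponent $p\mathfrak{h}$ (respectively the two exponents in \eqref{F6}, governed by $\beta_p\in\{0,1\}$) is configuration-independent, so the argument goes through unchanged.
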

 
 \begin{remark}
 Notice that inequalities \eqref{F1} and \eqref{F8}  \emph{cannot} be obtained from \eqref{F4} and \eqref{F6}, respectively,   by  simply putting $\Theta =1$.
 \end{remark}

\begin{proof}[Proof of Lemma \ref{LEM:H12}]
Let us first show the inequality \eqref{F1} under hypothesis $\mathbf{(H1)}$.  We will make use of the expression \eqref{F2}, where $\Theta =1$.
 Using the concavity condition \eqref{concond} and the fact that $\varphi(-x)=\varphi(x)$, we can write 
\begin{equation} \label{ECU1}
 \prod_{k=1}^p \varphi(\eta_k - \eta_{k-1}) \leq \kappa_0^{p-1} \sum_{\pmb{\beta}\in\mathcal{A}_p} \prod_{k=1}^p \varphi(\eta_k)^{\beta_k},
 \end{equation}
where $\mathcal{A}_p$ is a collection of the indices   $\pmb{\beta}=(\beta_1, \dots, \beta_p)\in  \{ 0,1,2\}^p$  satisfying 
\[
x_1 \prod_{j=2}^p (x_j + x_{j-1}) = \sum_{\pmb{\beta}\in\mathcal{A}_p} \prod_{k=1}^p  x_k^{\beta_k}.
\]
It is easy to see that the cardinality of $\mathcal{A}_p$ is $2^{p-1}$. Plugging the inequality   (\ref{ECU1}) into  (\ref{F2}) yields
\[
K_{1,p}(1,t) \le 
p! \kappa_0^{p-1} \sum_{\pmb{\beta}\in\mathcal{A}_p}    \int_{\text{SIM}_p(t)} d\pmb{w_p}    \int_{\R^{p}}  d\pmb{\eta_p}   e^{-\frac 12\sum_{k=1}^p w_k \eta_k^2}
 \prod_{j=1}^p \varphi(\eta_j)^{\beta_j}.
 \]
Following the same arguments as in the proof of Lemma 3.3 in \cite{HHNT15}, we have 
 \begin{equation}
K_{1,p}(1,t) \le      p!(2 \kappa_0)^{p-1} \sum_{k=0}^{p} \binom{p}{k} \frac{t^k}{k!} (D_N)^k (2C_N)^{p-k}, \label{reexp:1}
\end{equation}
where the quantities    $C_N$ and $D_N$ are defined in \eqref{quan:CDN1}.
   The sum in the right-hand side of \eqref{reexp:1} can be estimated as follows:
\begin{equation} \label{F3}
 \sum_{k=0}^{p} \binom{p}{k} \frac{t^k}{k!} (D_N)^k (2C_N)^{p-k} =  (2C_N)^{p} \sum_{k=0}^{p} \binom{p}{k} \frac{(tD_N)^k  }{ (2C_N)^kk!}   \leq  (4C_N)^{p} e^{  \frac{tD_N }{ 2C_N}   }.
 \end{equation}
 Substituting \eqref{F3} into \eqref{reexp:1} yields  \eqref{F1}.
 
 To show \eqref{F4}, we write, using the expression \eqref{F2} and the estimates $\varphi(\eta_p -\eta_{p-1}) \le \kappa_0 [ \varphi(\eta_p) + \varphi(\eta_{p-1})]$ and $\exp(-\frac 12w_p \eta_p^2) \le 1$,
 \begin{equation} \label{F5}
 K_{1,p}(\Theta,t) =  p  t \kappa_0  \left( \int_\R  \Theta(  \eta_p)  \varphi(\eta_p) d\eta_p  \right)  K_{1,p-1}(1,t)
 +   p  t \kappa_0  \left( \int_\R  \Theta(  \eta_p)    d\eta_p  \right)   \widetilde{K}_{1,p-1}(1,t),
 \end{equation}
 where
\[
    \widetilde{K}_{1,p-1}(1,t)=  (p-1)!
 \int_{\text{SIM}_{p-1}(t)} d\pmb{w_{p-1}}   \int_{\R^{p-1}}  d\pmb{\eta_{p-1}}     e^{-\frac 12\sum_{k=1}^{p-1} w_k \eta _k^2}
\left( \prod_{j=1}^{p-1} \varphi(\eta_j- \eta_{j-1}) \right) \varphi(\eta_{p-1}).
\]
 It is easy to show that the estimate \eqref{F1} still holds if we replace $K_{1,p}(1,t)$ by $\widetilde{K}_{1,p}(1,t)$. 
 Therefore, substituting the estimate
 \[
 \max(K_{1,p-1}(1,t), \widetilde{K}_{1,p-1}(1,t) ) \le  \frac {   (p-1)!}{2\kappa_0} (8\kappa_0C_N)^{p-1} \exp\left(  \frac{tD_N }{ 2C_N}   \right)
 \]
 into  \eqref{F5}, we obtain \eqref{F4}.

Now let us  prove \eqref{F6}  under hypothesis {\bf (H2)}. We observe that, using the expression \eqref{F7}, we can write
\begin{align*}
K_{2,p} (\Theta,t) &=  p!  \int_{\text{SIM}_p(t)} d\pmb{w_p} \left( \int_{\R}d\eta_p  \Theta(\eta_p)    \int_{\R^{p-1}}  d\pmb{\eta_{p-1}}    e^{-\sum_{k=1}^p w_k \eta_k^2}   \prod_{j=1}^p \varphi(\eta_j - \eta_{j-1} ) \right)^{\frac{1}{2H_0}}  \\
 \leq&\ p!   \int_{\text{SIM}_p(t)} d\pmb{w_p} \left( \sum_{\pmb{\beta}\in\mathcal{A}_p}\int_{\R}d\eta_p  \Theta( \eta_p)    \int_{\R^{p-1}}  d\pmb{\eta_{p-1}}    e^{-\sum_{k=1}^p w_k \eta_k^2}  \prod_{j=1}^p|\eta_j  |^{\beta_j(1-2H_1)} \right)^{\frac{1}{2H_0}}   \\
 \leq&\ p!  \sum_{\pmb{\beta}\in\mathcal{A}_p} \int_{\text{SIM}_p(t)} d\pmb{w_p} \left( \int_{\R}d\eta_p  \Theta( \eta_p)    \int_{\R^{p-1}}  d\pmb{\eta_{p-1}}    e^{-\sum_{k=1}^p w_k\eta_k^2}  \prod_{j=1}^p |\eta_j  |^{\beta_j(1-2H_1)} \right)^{\frac{1}{2H_0}} . 
  \end{align*}
For $\pmb{\beta}\in\mathcal{A}_p$, we have $\beta_p\in\{0,1\}$.  As a consequence,
\begin{align}
  K_{2,p} (\Theta,t) 
 &   \le C_\Theta   t p! \sum_{\pmb{\beta}\in\mathcal{A}_p} \int_{\text{SIM}_{p-1}(t)} d\pmb{w_{p-1}}  \prod_{j=1}^{p-1} \left(     \int_{\R}  d\eta_j    e^{- w_j \eta_j^2} |\eta_j  |^{\beta_j(1-2H_1)} \right)^{\frac{1}{2H_0}} \notag\\
 &=   C_\Theta    t p! \sum_{\pmb{\beta}\in\mathcal{A}_p}  \prod_{j=1}^pK\big( \beta_j(1-2H_1)\big)^{\frac{1}{2H_0}}  \int_{\text{SIM}_{p-1}(t)} d\pmb{w_{p-1}}  \prod_{j=1}^{p-1} w_j^{-\frac{1+ \beta_j(1-2H_1)}{4H_0} } , \notag % \label{calso} 
   \end{align}
  where   $C_\Theta = \int_{\R} \Theta(x) (1+ |x| ^{1-2H_1} ) dx$ and we use the notation
\begin{align*} %\label{Kintro}
K(\theta) := \int_{\R} e^{-x^2} x^{\theta}dx, ~\theta \ge 0.
\end{align*} 
 It is clear that by our assumptions on $(H_0, H_1)$,  that the quantities $\alpha_j:= -\frac{1+ \beta_j(1-2H_1)}{4H_0}$ belong to the interval  $(-1,0)$, so that 
  \begin{align*}
 \int_{\text{SIM}_{p-1}(t)} d\pmb{w_{p-1}}  \prod_{j=1}^{p-1} w_j^{-\frac{1+ \beta_j(1-2H_1)}{4H_0} }   &= \int_{\text{SIM}_{p-1}(t)} d\pmb{w_{p-1}}  \prod_{j=1}^{p-1} w_j^{\alpha_j} \notag \\
   &=  \frac{ t^{p-1+\alpha_1+\cdots +\alpha_{p-1}} }{\Gamma(p+\alpha_1+\cdots + \alpha_{p-1})} \prod_{i=1}^{p-1} \Gamma(1+\alpha_i); %\label{GBBD}
   \end{align*}
   see also Lemma 6.2 in \cite{SSX19}. As $ \beta_1+\cdots +\beta_{p-1}\in\{p-1,  p\}$, then 
   \[
    \sum_{j=1}^{p-1}\alpha_j \in\left\{  -(p-1) \frac{1-H_1}{2H_0},   \frac{1}{4H_0} - p \frac{1-H_1}{2H_0} \right\}   
   \]
   and 
   $$
   p+\alpha_1+...+ \alpha_{p-1} \in \left\{  p  \mathfrak{h}  + \frac{1-H_1}{2H_0}, p  \mathfrak{h}  + \frac{1}{4H_0} \right\} \,\,\,\text{with $ \mathfrak{h} = \frac{2H_0+H_1 -1}{2H_0} >\frac 14 $. }
   $$ 
   Thus,  
   \[
   \Gamma(p+\alpha_1+\cdots + \alpha_{p-1}) \geq  \Gamma(p\mathfrak{h} +  \frac{1-H_1}{2H_0}  ) \wedge \Gamma(p\mathfrak{h} +  \frac{1}{4H_0}  ) .
   \]
Then,  inequality \eqref{F6} follows from the previous computations.   The inequality \eqref{F8} has been proved in \cite[Remark 3.3]{SSX19}.
 \end{proof}
 
 \begin{remark}
Suppose that we choose   $\Theta(x)= G(a,x)$ in Lemma  \ref{LEM:H12}, for some $a\in (0,1)$. Then, if $\delta>0$ is such that $\varphi$ is bounded in $[-\delta, \delta]$, we obtain, under hypothesis  ${\bf (H1)}$ or  ${\bf (H2)}$,
 \[
 \int_{\R} dx G(a,x) \varphi(x)dx  \le \sup_{x\in [-\delta, \delta]} \varphi(x)+ \sqrt{a} \int_{\{ |x| \ge \delta\}} dx  \frac { \varphi(x)} {x^2},
 \]
 which  implies
 \begin{equation} \label{C0}
 \mathcal{C}_0= \sup_{a\in (0,1) }  \int_{\R} G(a,x) (1+\varphi(x))dx  <\infty.
 \end{equation}
\end{remark}

Next, we will give a useful estimate that will be applied in many places. For any $R>0$, we define
 \begin{align} \label{def:LR}
  \ell_R(\xi)=\frac{\sin^2{(R\xi)}}{\pi R\xi^2}
  \end{align}  and note that
  \[
  \int_{[-R,R]^2} dxdy e^{-\i (x-y)\xi} = \frac{4 \sin^2(R\xi)}{ \xi^2}=4\pi R\ell_R(\xi).
  \]
By \cite[Lemma 2.1]{NZ19BM}, $\{\ell_R\}_{ R>0}$ defines an approximation to the identity (as $R\to\infty$). 
\begin{lemma}\label{lemma-uni}
Assume {\bf (H1)} or {\bf (H2)}.  For any $\ep>0$, there exists a constant $C(\ep)$ such that 
 \begin{equation}\label{uni-bound}
 \int_{\R}\ell_R(\xi)\varphi(\xi)d\xi  \le  \ep +\frac {C(\ep)}R.
 \end{equation}
 \end{lemma}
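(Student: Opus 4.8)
The plan is to split the integral at a small neighborhood of the origin whose radius depends on $\ep$, exploiting the two structural features that $\varphi$ enjoys in both settings: it vanishes and is continuous at $0$, while $\{\ell_R\}_{R>0}$ is an approximation to the identity with total mass one.

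First I would fix $\ep>0$ and use the continuity of $\varphi$ at the origin together with $\varphi(0)=0$ to select $\delta=\delta(\ep)>0$ such that $\varphi(\xi)\le \ep$ whenever $|\xi|\le\delta$. (Under $\mathbf{(H2)}$ this is automatic, since $\varphi(\xi)=|\xi|^{1-2H_1}\to 0$ as $\xi\to 0$ because $1-2H_1>0$.) Recalling that $\int_{\R}\ell_R(\xi)\,d\xi=1$ — which follows from the normalization $\int_{\R}\frac{\sin^2 u}{u^2}\,du=\pi$ after the change of variables $u=R\xi$ — the contribution of the region $\{|\xi|\le\delta\}$ is controlled by
\[
\int_{\{|\xi|\le\delta\}}\ell_R(\xi)\varphi(\xi)\,d\xi \le \ep\int_{\R}\ell_R(\xi)\,d\xi=\ep.
\]

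For the complementary region I would invoke the crude bound $\ell_R(\xi)\le \frac{1}{\pi R\xi^2}$, coming from $\sin^2\le 1$, to obtain
\[
\int_{\{|\xi|>\delta\}}\ell_R(\xi)\varphi(\xi)\,d\xi \le \frac{1}{\pi R}\int_{\{|\xi|>\delta\}}\frac{\varphi(\xi)}{\xi^2}\,d\xi.
\]
The crux is then to verify that $M_\delta:=\int_{\{|\xi|>\delta\}}\xi^{-2}\varphi(\xi)\,d\xi$ is finite. Under $\mathbf{(H1)}$ this is where the modified Dalang condition enters: on $\{|\xi|>\delta\}$ one has $\xi^{-2}\le \frac{1+\delta^2}{\delta^2}(1+\xi^2)^{-1}$, so that $M_\delta\le \frac{1+\delta^2}{\delta^2}\int_{\R}\frac{\varphi(\xi)}{1+\xi^2}\,d\xi<\infty$ by the footnoted consequence of \eqref{mDc}. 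Under $\mathbf{(H2)}$ a direct computation gives $M_\delta=\int_{\{|\xi|>\delta\}}|\xi|^{-1-2H_1}\,d\xi=\frac{\delta^{-2H_1}}{H_1}<\infty$, using $H_1>0$. Adding the two pieces yields \eqref{uni-bound} with the explicit choice $C(\ep)=M_{\delta(\ep)}/\pi$.

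The argument presents no genuine obstacle; the only step demanding care is the finiteness of $M_\delta$, which rests exactly on the hypothesis \eqref{mDc} in case $\mathbf{(H1)}$ and on $H_1>0$ in case $\mathbf{(H2)}$. It is worth emphasizing that the smallness of the near-origin piece relies crucially on $\varphi(0)=0$: without this assumption the first term would only be of order $1$ rather than of order $\ep$, and the uniform bound \eqref{uni-bound} would fail.
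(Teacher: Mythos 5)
Your proof is correct and follows essentially the same route as the paper: split at a radius $\delta(\ep)$ where $\varphi\le\ep$ by continuity and $\varphi(0)=0$, bound the inner piece by $\ep$ using $\int_\R \ell_R=1$, and bound the outer piece by $\frac{1}{\pi R}\int_{\{|\xi|>\delta\}}\varphi(\xi)\xi^{-2}d\xi$, whose finiteness follows from the footnoted consequence of \eqref{mDc} under \textbf{(H1)} and from $H_1>0$ under \textbf{(H2)}. Your justification of the finiteness of $M_\delta$ is in fact slightly more explicit than the paper's.
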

 \begin{proof}
 In both cases {\bf (H1)} and {\bf (H2)}, the function $\varphi$ is continuous at $0$. Thus, for any $\e>0$, there exists some $\delta>0$ such that $\varphi(x) < \e$ for $|x|<\delta$. Then,
\begin{align*}
\int_{\{ | \xi | <\delta \}} \ell_R(\xi) \varphi(\xi) d\xi\leq \e\int_\R \ell_R(\xi) d\xi=\e,
\end{align*}
and
\[%\label{R-limit}
\int_{\{ | \xi | \geq \delta \}}\ell_R(\xi) \varphi(\xi)  d\xi   \leq  
 \begin{dcases} {\displaystyle \frac{1}{\pi R} \int_{\{ | \xi | \geq \delta \}}  \frac{\varphi(\xi)}{\xi^2}    d\xi }\leq \frac{C(\e)}{R} ~\text{ in case ($\mathbf{H1}$)},\\
\quad\\
{\displaystyle \frac{1}{\pi R} \int_{\{ | \xi | \geq \delta \}} |\xi|^{-1-2H_1} d\xi }\le \frac{C(\e)}{R}  ~\text{ in case ($\mathbf{H2}$)}.
 \end{dcases}
\]
In this way, we just proved \eqref{uni-bound}.
\end{proof}

 \begin{remark}
If we choose   $\Theta(x)= \ell_R(x)$ in Lemma  \ref{LEM:H12},  then,  from Lemma \ref{lemma-uni}, we get  
 \begin{equation} \label{C1}
 \mathcal{C}_1= \sup_{R>1}  \int_{\R}  \ell_R(x)(1+\varphi(x))dx  <\infty.
 \end{equation}
\end{remark}

 \medskip
 
 Let $\Pi_p A_{t} (R)$ denote the projection of $A_t(R)$ on the $p$th Wiener chaos, where $A_t(R)$ is defined in \eqref{ATR}, that is, 
\[
\Pi_p A_{t} (R)=I^W_p\left(\int_{-R}^R f_{t,x,p}dx\right).
\]
  In view of Theorem \ref{thm00}, we need first to establish the asymptotic covariance of each chaos of $ \frac{1}{\sqrt{R}} A_t(R)$ and we know 
  that 
    \[
 \frac{1}{\sqrt{R}} A_t(R)=\sum_{p=1}^\infty \frac{1}{\sqrt{R}} \Pi_p A_{t} (R) =\sum_{p=1}^\infty  I^W_p\left( \frac{1}{\sqrt{R}}\int_{-R}^R f_{t,x,p}dx\right).
 \]

Now we are ready to present the proof of Proposition \ref{PROP:COV1}.

\begin{proof}[\rm\textbf{Proof of Proposition \ref{PROP:COV1} }]  In what follows, we only present the proof for the particular case where $t_1=t_2$, and the general case  follows from the same arguments with solely notational modifications. Let us fix $t>0$ and break the proof into three steps.

\medskip

 \noindent\textbf{Step 1: Proof of  \eqref{eqn-2}.} Noting that
$
\Pi_1 A_{t} (R)=I^W_1\left(\int_{-R}^R f_{t,x,1}dx\right)
$
and recalling  \cite[page 32]{NZ19BM}, we have
 \begin{align*}
{\rm Var}\Big[ \Pi_1  \big( A_t(R)\big) \Big]&  = \int_{-R}^R\int_{-R}^R   dxdy   \langle f_{s,x,1}, f_{t,y,1} \rangle_{\H}\\
&= \int_{[0,t]^2}drdv \gamma_0(r-v) \int_\R d\xi   \int_{-R}^R\int_{-R}^R  dxdy e^{-\i (x-y)\xi} \varphi(\xi) e^{-\frac{1}{2} (r+v) \xi^2}.
 \end{align*} 
We deduce from \eqref{def:LR} that
\[
\frac{1}{2 R}{\rm Var}\Big[ \Pi_1  \big( A_t(R)\big) \Big]= 2\pi\int_{[0,t]^2}drdv \gamma_0(r-v) \int_\R d\xi  \ell_R(\xi) \varphi(\xi) e^{-\frac{1}{2} (r+v) \xi^2}.
\]
 From Lemma \ref{uni-bound} and using that $\gamma_0$ is locally integrable, we get for any $\e>0$,
 \[
 \frac{1}{2 R}{\rm Var}\Big[ \Pi_1  \big( A_t(R)\big) \Big]  \le  C\left( \e + \frac {C(\e)} R\right),
 \]
which  implies $\lim\limits_{R\to\infty}R^{-1} {\rm Var}\Big[ \Pi_1  \big( A_t(R)\big) \Big] = 0$ in both cases {\bf (H1)} and {\bf (H2)}.

 \bigskip
 
\noindent\textbf{Step 2: Proof of  \eqref{eqn-1} and  \eqref{eqn-3}.}  Recall from \cite[page 31]{NZ19BM} that, for any $p\ge 2$,
 \begin{align*}
 \frac{1}{2R}{\rm Var}\Big[\Pi_p\big(A_t(R)\big)\Big] &=\frac{1}{2R}\int_{[-R,R]^2} dx dy\E\Big[I^W_p\left( f_{t,x,p}\right)I^W_p\left( f_{t,y,p}\right)\Big]\\
& =  p! \int_{\R} dz   \langle f_{t,z,p}, f_{t,0,p} \rangle_{\H^{\otimes p}} \frac{ \vert [-R,R] \cap [-z-R, -z+R] \vert  }{2R}
 \end{align*}
 and 
 \begin{align}
  \big\langle f_{t,z,p} ,  f_{t,0,p}  \big\rangle_{\H^{\otimes p}} 
&=\frac{1}{(p!)^2}  \int_{[0,t]^p\times  [0,t]^p} d\pmb{s_p} d\pmb{r_p}  \prod_{j=1}   \gamma_0(s_j-r_j ) \int_{\R^{p}} \mu(d\pmb{\xi_p}) e^{-\i z  \tau(\pmb{\xi_p})} \notag  \\&\qquad \times \E\left[  \prod_{j=1}^p \exp\left(  -\i   B_{s_j}  \xi_j     \right)  \right]  \E\left[  \prod_{j=1}^p \exp\left(  -\i  B_{r_j}  \xi_j     \right)  \right]. \label{acexp}
 \end{align}
It is not difficult to deduce from Proposition \ref{prop1} and the expression in \eqref{acexp} that
 \[
 (p!)^2 \big\langle f_{t,z,p}, f_{t,0,p} \big\rangle_{\H^{\otimes p}} =  \E\left[ \left(\mathcal{I}^{1,2}_{t, t}(z)\right)^{p} \right].
  \]
  Then 
  \[
\frac{1}{2R}   {\rm Var}\big[ \Pi_p A_t(R) \big] =\frac 1{p!} \int_{\R} dz    \E\left[ \left(\mathcal{I}^{1,2}_{t, t}(z)\right)^{p} \right]  \frac{ \vert [-R,R] \cap [-z-R, -z+R] \vert  }{2R}.
  \]
  Because  $ \vert [-R,R] \cap [-z-R, -z+R] \vert /(2R)$ converges to $1$ as $R$ tends to infinity, the convergences  \eqref{eqn-1} and  \eqref{eqn-3} will be a consequence of
  \begin{equation} \label{above}
\sum_{p=2} ^\infty  \frac 1{p!} \int_{\R} dz    \E\left[ \left|\mathcal{I}^{1,2}_{t, t}(z)\right|^{p} \right]  <\infty.
\end{equation}
  In view of Fatou's lemma and taking into account that  for any $z\in \R$,  $\mathcal{I}^{1,2}_{t, t}(z)$ is  the $L^k(\Omega)$-limit of  $\mathcal{I}^{1,2}_{t, t,\e}(z)$ (for any $k\ge 2$) as $\e$ tends to zero,
    to show \eqref{above}, it suffices to prove that
  \begin{equation}\label{above1}
 \sup_{\e>0}  \sum_{p= 2}^\infty \frac{1}{p!} \int_{\R} dz   \E\big[ \vert \mathcal{I}^{1,2}_{t, t,\e}(z)\vert ^p \big] <\infty.
   \end{equation}
   From
   \[
   \sum_{p=2}^\infty  \frac { |x|^p}{p!} = e^{|x|} - |x| -1 \le (e^x + e^{-x})-2 =  2 \sum_{n=1}^\infty  \frac { x^{2n} }{ (2n)!},
   \]
   we deduce  that \eqref{above1} holds true  provided
  \begin{equation}\label{above2}
 \sup_{\e>0}  \sum_{n= 1}^\infty \frac{1}{(2n)!} \int_{\R} dz   \E \left[\left( \mathcal{I}^{1,2}_{t, t,\e}(z)\right) ^{2n} \right] <\infty.
   \end{equation}
  \medskip
  
Next, let us prove \eqref{above2}.
First consider the case {\bf (H1)}. Fix an even integer $p = 2n\geq 2$.  
 For any $\e,a > 0$, by Fubini's theorem,
\begin{align}
T_{\e,a}(p):=&\, \int_{\R}\E\left[ \big(\mathcal{I}^{1,2}_{t, t,\e}(z)\big)^{p} \right]  \exp\left( -\frac{a}{2} z^2 \right) dz  \notag \\
=&\,  2\pi \int_{[0,t]^{2p} }  \int_{\R^{p}}  d\pmb{\xi_p} d\pmb{s_p} d\pmb{r_p}   \left( \prod_{j=1}^p \gamma_0(s_j - r_j ) \right) \left(  \prod_{j=1}^p  \varphi(\xi_j)  e^{-\e \xi_j^2} \right) \notag  \\ 
  &\qquad\qquad\quad  \times   G\big(a,  \tau(\pmb{\xi_p})  \big) \E\left[  \prod_{j=1}^p \exp\left(  -\i   B_{s_j}   \xi_j     \right)  \right]   \E\left[  \prod_{j=1}^p \exp\left(  -\i   B_{r_j}   \xi_j     \right)  \right].  \label{ec1}
\end{align}
 Note that $T_{\e,a}(p)\geq 0$ since $p$ is even. Using   
\[
\E\left[  \prod_{j=1}^p \exp\left(  -\i   B_{r_j}   \xi_j     \right)  \right]  \in (0, 1],
\]
we can  bound  $T_{\e,a}(p)$ as follows:
\begin{align*}
T_{\e,a}(p)&\leq 2\pi \Gamma_t^p  \int_{\R^{p}} d\pmb{\xi_p} \int_{[0,t]^p } d\pmb{s_p}  \left( \prod_{j=1}^p    \varphi(\xi_j)  \right)   G\big(a,  \tau(\pmb{\xi_p})  \big) \E\left[  \prod_{j=1}^p \exp\left(  -\i   B_{s_j}  \xi_j     \right)  \right] ,
\end{align*}
where the constant $\Gamma_t := \int_{-t}^t \gamma_0(r)dr$ is finite for each $t > 0$, since $\gamma_0$ is locally integrable. 
  By the inequality \eqref{F4} in  Lemma \ref{LEM:H12},
\[
\sup_{a\in(0,1)} T_{\e,a}(p )   \le \sup_{a\in(0,1)}  2\pi \Gamma_t^p K_{1,p} (G(a,\bullet),t) 
\le    \mathcal{C}_0 t\pi \Gamma_t^p p!(8\kappa_0 C_N)^{p-1} e^{\frac{tD_N}{2C_N}},
\]
where $\mathcal{C}_0$ is the constant defined in  \eqref{C0}.
Then, recalling $p=2n\geq 2$ and letting $a\downarrow 0$, we obtain
\begin{equation} \label{F15}
 \frac{1}{(2n)!}   \int_{\R}  \E\left[ \big(\mathcal{I}^{1,2}_{t, t,\e}(z)\big)^{2n} \right]dz \leq       \mathcal{C}_0 t\pi \Gamma_t^{2n} (8\kappa_0\Gamma_t C_N)^{2n-1} e^{\frac{tD_N}{2C_N}},
\end{equation}
which implies \eqref{above2}, provided we choose $N$ is such a way that $8\kappa_0\Gamma_t C_N<1$.

Now we consider  the case {\bf (H2)} where $\gamma_0(t) = | t|^{2H_0-2}$  and $\varphi(z) = | z|^{1-2H_1}$ with $H_0>1/2>H_1$ and  $H_0 + H_1>3/4$.      We begin with \eqref{ec1} and apply the embedding result   \eqref{F12} and Cauchy-Schwarz inequality  to write  for any $\e,a > 0$ and any even $p=2n\geq 2$,
\begin{align*}
T_{\e,a}(p) &\leq   2\pi  C_{H_0}^p  \left[    \int_{[0,t]^p} d\pmb{s_p} \left(  \int_{\R^{p}}  \mu(d\pmb{\xi_p})  G\big( a, \tau(\pmb{\xi_p})  \big)    \exp\left(   - \Var \sum_{j=1}^p   B_{s_j}   \xi_j     \right)   \right)^{\frac{1}{2H_0}}    \right]^{2H_0}\\
   &=  2\pi C_{H_0}^p  [K_{2,p}(G(a, \bullet),t)] ^{2H_0},
   \end{align*}
   where  $K_{2,p}(G(a, \bullet),t)$ has been defined in \eqref{K2}. 
Then, applying inequality \eqref{F6} of  Lemma \ref{LEM:H12} leads to
\[
T_{\e,a}(p)\leq 2\pi  C_{H_0}^p  (p!  \mathcal{C}_0 )^{2H_0}  \frac{C_1^{2H_0p}  \left(t^{2H_0p\mathfrak{h} +1-H_1 }\vee t^{2H_0p\mathfrak{h} +  \frac{1}{2}}  \right)}{\Gamma( p\mathfrak{h} +  \frac{1-H_1}{2H_0}   )^{2H_0} \wedge \Gamma(p\mathfrak{h} + \frac{1}{4H_0} )^{2H_0}    },
\]
 for $\e>0$, $a\in(0,1)$ and any even integer $p=2n\geq 2$, where $\mathfrak{h} = \frac{2H_0 +H_1 -1}{2H_0}>\frac 14$ and
 with $\mathcal{C}_0$ being the constant defined in  \eqref{C0}.
  Thus,  letting $a\downarrow 0$, we have for $n\geq 1$,
  \begin{align}
\frac 1{ (2n)!}   \int_{\R} \E\Big[\big( \mathcal{I}^{1,2}_{t, t,\e}(z)\big)^{2n}\Big] dz \leq  \frac{  C_2^{2n}((2n)!)^{2H_0-1}\left(t^{4H_0n\mathfrak{h} +1-H_1 }\vee t^{4H_0n\mathfrak{h} +  \frac{1}{2}}  \right)}{\Gamma( 2n\mathfrak{h} +  \frac{1-H_1}{2H_0}   )^{2H_0} \wedge \Gamma(2n\mathfrak{h} + \frac{1}{4H_0} )^{2H_0}      }, \label{view331}
  \end{align}
  for some constant $C_2$, which depends only on $H_0$ and $H_1$.
  Notice that  the denominator in  the right-hand side of \eqref{view331} behaves as $[(2n)!]^{ 2H_0 \mathfrak{h}}$, as $n\to \infty$,  and 
 $
  2H_0 \mathfrak{h} =  2H_0+H_1-1.
  $
   Thus, the   estimate \eqref{view331}   implies \eqref{above2} in the case {\bf (H2)}. 
 Hence the proof of Proposition  \ref{PROP:COV1} is completed now.
 \end{proof}
 
 \begin{remark}
We observe
 \[
\frac{1}{2R}  {\rm Var}\big( A_{t} (R) \big)=\frac{1}{2R}{\rm Var} \left[\int_{-R}^R (u(t,x)-1 )  dx\right] = \frac{1}{2R} \int_{[-R,R]^2} \big(  \E\big[ u(t,x) u(t,y) ] - 1 \big) dxdy,
  \]
 which in view of Proposition \ref{prop1}, is equal to
\[
\frac{1}{2R} \int_{[-R,R]^2} \E\big[( e^{\mathcal{I}^{1,2}_{t, t}(x-y)} -1 ) \big]dxdy = \int_{\R}  \frac{\vert [z-R,z+R]\cap [-R,R] \vert}{2R} \E\big[  e^{\mathcal{I}^{1,2}_{t, t}(z)} -1\big]dz.
\]
As $R\to \infty$, this converges to
\[
 \int_{\R}\E\big[ e^{ \mathcal{I}^{1,2}_{t, t}(z)} -1 \big] dz= \int_{\R} \sum_{p= 2}^\infty \frac{1}{p!} \E\left[\left( \mathcal{I}^{1,2}_{t, t}(z)\right)^p\right] dz,
 \]
 since $\E\big[\mathcal{I}^{1,2}_{t, t}(z)\big]=\lim\limits_{\e\downarrow 0}\E\big[\mathcal{I}^{1,2}_{t, t,\e}(z)\big]=0$, based on  {\bf Step 1} in the proof of Proposition \ref{PROP:COV1}. The above limit is exactly the one in \eqref{eqn-1}.
  \end{remark}

\section{Convergence of finite-dimensional distributions}\label{SEC4}
 
 To apply the multivariate chaotic CLT (Theorem \ref{thm00}), we need to show the convergence of finite-dimensional distributions. There are four conditions in this theorem that we need to check. In fact, we only need to verify the condition (c), as the other conditions are satisfied as a consequence of our estimates in  the previous section. We refer readers to \cite{NZ19BM} for similar arguments. 
 
We can write 
  \[
 \frac{1}{\sqrt{R}} A_t(R)=\sum\limits_{p= 1}^\infty  I_p^W(g_{p,R}(t)),
 \]
where 
 \[
 g_{p,R}(t) := \frac{1}{\sqrt{R}} \int_{-R}^R f_{t,x,p}dx.
 \]
 \begin{proposition} \label{prop2}
For each integer $p\geq 2$ and each integer $1\leq r \leq p-1$, we have
 \[
 \big\| g_{p,R}(t) \otimes_r g_{p,R}(t) \big\| _{\H^{\otimes 2p-2r}} \xrightarrow{R\to\infty} 0.
 \]
 \end{proposition}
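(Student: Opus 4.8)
The plan is to reduce the squared contraction norm to a spatial integral over four copies of $[-R,R]$, to exploit translation invariance so as to gain a factor $R^{-1}$, and to control the remaining integral uniformly in $R$ by means of Lemma \ref{LEM:H12} and Proposition \ref{prop1}, exactly in the spirit of the proof of Proposition \ref{PROP:COV1}.

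First I would write $\|g_{p,R}(t)\otimes_r g_{p,R}(t)\|_{\H^{\otimes(2p-2r)}}^2$ explicitly. Since $g_{p,R}(t)=R^{-1/2}\int_{-R}^R f_{t,x,p}\,dx$ and the squared norm of the contraction is a fourfold pairing of copies of $g_{p,R}(t)$, this quantity equals, for a suitable nonnegative kernel $F_r$,
\[
\|g_{p,R}(t)\otimes_r g_{p,R}(t)\|_{\H^{\otimes(2p-2r)}}^2=\frac{1}{R^{2}}\int_{[-R,R]^4}F_r(x_1-x_2,x_1-x_3,x_1-x_4)\,dx_1dx_2dx_3dx_4 .
\]
Passing to the spatial Fourier transform, each kernel $f_{t,x_m,p}$ carries the overall phase $e^{-\i x_m\tau_m}$, where $\tau_m$ is the sum of the $p$ frequency variables of the $m$-th copy, and the $\H$-inner products glue the frequencies of the four copies in pairs $(\xi,-\xi)$ weighted by $\gamma_0(\cdot)\varphi(\cdot)$, the order $r$ (with $1\le r\le p-1$) prescribing which slots are glued. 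Because every pairing contributes opposite signs, one has $\tau_1+\tau_2+\tau_3+\tau_4=0$, so the integrand is invariant under a common shift of the $x_m$; this is precisely why $F_r$ depends only on the spatial differences. Moreover $F_r\ge0$, since $G\ge0$, $\gamma_0\ge0$ and $\varphi\ge0$.

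The gain of a power of $R$ is then immediate: since $F_r\ge0$, enlarging the domain of the last three integrations from $[-R,R]$ to $\R$ gives
\[
\|g_{p,R}(t)\otimes_r g_{p,R}(t)\|_{\H^{\otimes(2p-2r)}}^2\le \frac{1}{R^{2}}\int_{-R}^R dx_1\int_{\R^3}F_r(z_1,z_2,z_3)\,dz_1dz_2dz_3=\frac{2}{R}\,\|F_r\|_{L^1(\R^3)} .
\]
Thus it remains to prove that $\|F_r\|_{L^1(\R^3)}<\infty$, with a bound independent of $R$ (indeed $F_r$ does not depend on $R$). This is the heart of the matter. Using Proposition \ref{prop1}, I would represent $\int_{\R^3}F_r$ as the expectation of a product of the random variables $\mathcal I^{i,j}_{t,t}(\cdot)$ built from four independent Brownian motions according to the gluing pattern, and then bound it by the scheme used to establish \eqref{above}--\eqref{above2}: expand the exponential, estimate each term by the quantities $K_{1,p}$ or $K_{2,p}$ of \eqref{K1}--\eqref{K2}, and invoke the uniform estimates of Lemma \ref{LEM:H12} together with the exponential-moment bound $\sup_{\e,z}\E[\exp(\lambda|\mathcal I^{i,j}_{t,t,\e}(z)|)]<\infty$ from Proposition \ref{prop1}.

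The main obstacle is this last $L^1(\R^3)$ bound, and it is delicate for two reasons. First, one must track the contraction combinatorics, i.e. which of the $2p$ frequency pairs link which copies, in order to write $F_r$ correctly; the connectedness of this pairing graph (guaranteed by $1\le r\le p-1$) is what makes the three spatial differences genuinely present and the $R^{-1}$ gain effective. Second, under \textbf{(H2)} the spatial spectral density $\varphi(z)=|z|^{1-2H_1}$ is not integrable, so before applying Lemma \ref{LEM:H12} I would split the products $\prod_j\varphi(\eta_j-\eta_{j-1})$ by the subadditivity bound used in \eqref{ECU1}, reduce to products of single powers $|\eta_j|^{\beta_j(1-2H_1)}$, integrate out the temporal variables via the embedding \eqref{F12}, and finally control the frequency integrals by estimate \eqref{F6}; under \textbf{(H1)} the analogous bound follows directly from \eqref{F4} with $N$ chosen as in the proof of Proposition \ref{PROP:COV1}. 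Once $\|F_r\|_{L^1(\R^3)}<\infty$ is established, letting $R\to\infty$ yields the claim.
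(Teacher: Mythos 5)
Your reduction to $R^{-2}\int_{[-R,R]^4}F_r(x_1-x_2,x_1-x_3,x_1-x_4)\,d\pmb{x_4}$ via translation invariance is correct, but the two claims that make the argument work --- that $F_r\ge 0$ and that $F_r\in L^1(\R^3)$ --- are both unjustified, and the second is almost certainly unattainable under the paper's hypotheses. The positivity claim cannot be read off from ``$G\ge 0$, $\gamma_0\ge 0$, $\varphi\ge 0$'': in this rough setting the spatial covariance $\gamma$ is \emph{not} a nonnegative measure (that is the whole point of {\bf (H1)}/{\bf (H2)}), so the $\H$-pairings must be computed in Fourier variables, where $F_r$ is an oscillatory integral of the form $\int(\text{positive density})\,e^{-\i(z_1 A+z_2 B+z_3 C)}$; equivalently, $F_r$ is (up to combinatorial factors) an expectation of a product of powers of the real, sign-changing random variables $\mathcal I^{i,j}$ over four independent Brownian motions, and there is no reason for it to be pointwise nonnegative (recall that already $\langle f_{t,z,p},f_{t,0,p}\rangle_{\H^{\otimes p}}=(p!)^{-2}\E[(\mathcal I^{1,2}_{t,t}(z))^p]$ has no definite sign for odd $p$).

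The deeper problem is the $L^1$ bound itself. If $\|F_r\|_{L^1(\R^3)}<\infty$ held, your argument would give the quantitative rate $\|g_{p,R}(t)\otimes_r g_{p,R}(t)\|^2=O(R^{-1})$; equivalently, the Fourier transform of the underlying positive spectral density would be continuous, so that its ``restriction to the resonant hyperplanes'' is a well-defined finite limit. None of the tools you invoke can deliver this: Lemma \ref{LEM:H12} and the moment bounds behind Proposition \ref{prop1} control integrals of the positive frequency density against nonnegative weights $\Theta(\tau(\pmb{\xi_p}))$ --- i.e.\ quantities of the form $\int F_r\,\widehat\Theta$ --- never $\int|F_r|$; and under {\bf (H1)} the only information on $\varphi$ is Dalang-type integrability plus continuity at $0$, which gives no decay whatsoever for the relevant Fourier transforms in the $z$-variables. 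This is precisely why the paper does not integrate out the spatial variables: it keeps the kernels $\ell_R$ (approximations of the identity concentrating on the resonant set $\{a+b=0\}$, etc.), splits the frequency domain into $\{|a+b|\ge\delta\}$ and $\{|a+b|<\delta\}$, shows the first piece vanishes as $R\to\infty$ for fixed $\delta$ (using $\int_{\{|\eta|\ge\delta\}}\ell_R(\eta)(1+\varphi(\eta))\,d\eta\to 0$), and shows the second piece satisfies $\lim_{\delta\to 0}\limsup_{R\to\infty}(\cdot)=0$ via the indicator $\mathbf 1_{\{|\eta_r-\wt\eta_r|<2\delta\}}$ and dominated convergence. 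The resulting convergence carries no rate, which is consistent with $F_r\notin L^1$. To repair your proof you would have to abandon the $L^1$ domination and carry out this two-parameter $\delta$-splitting on the frequency side.
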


 \begin{proof}
 Along the proof $C$ will be a generic constant that may very form line to line.
 We put 
 \[
 \mathfrak{f}(\pmb{s_p}, \pmb{y_p}) = f_{t,0,p}(\pmb{s_p}, \pmb{y_p})
 \]
  and recall that,   with $B$ a real-valued standard Brownian motion on $\R$,
 \begin{equation}
 (\F \mathfrak{f})(\pmb{s_p}, \pmb{\xi_p}) = (\F f_{t,0,p})(\pmb{s_p}, \pmb{\xi_p})  \label{expform}
  = \frac{1}{p!}  \E\left[ \exp\Big( -\i \sum_{j=1}^p B_{s_j }   \xi_j\Big) \right], 
 \end{equation}
   where $\F \mathfrak{f}$ stands for the Fourier transform with respect to the spatial variables. As a consequence,  $ (\F \mathfrak{f})(\pmb{s_p}, \pmb{\xi_p})$ is a positive, bounded and uniformly continuous function in $\pmb{\xi_p}$.

 Now we write, 
 with the notation $d\ell^{4p} = d\pmb{s_r}  d\pmb{\wt{s}_r}d\pmb{v_r} d\pmb{\wt{v}_r} d\pmb{t_{p-r}} d\pmb{\wt{t}_{p-r}}d\pmb{w_{p-r}} d\pmb{\wt{w}_{p-r}}  $ for the Lebesgue measure on $[0,t]^{4p}$, 
 \begin{align*}
 \big\| g_{p,R}(t) \otimes_r g_{p,R}(t) \big\| _{\mathfrak{H}^{\otimes (2p-2r)}}^2  &= (2\pi)^2\int_{[0,t]^{4p}} d\ell^{4p}  \left( \prod_{i=1}^r \gamma_0(s_i-\wt{s}_i)\gamma_0(v_i-\wt{v}_i)\right) \\
& \qquad  \times \left( \prod_{j=1}^{p-r} \gamma_0(t_j-\wt{t}_j)\gamma_0(w_j-\wt{w}_j)\right)  \mathcal{J}_R,
 \end{align*}
 with $\mathcal{J}_R = \mathcal{J}_R\big( \pmb{s_r},  \pmb{\wt{s}_r}, \pmb{v_r} , \pmb{\wt{v}_r}, \pmb{t_{p-r}}, \pmb{\wt{t}_{p-r}},\pmb{w_{p-r}} ,\pmb{\wt{w}_{p-r}}  \big)$ given by
 \begin{align*}
 \mathcal{J}_R &=  \int_{\R^{2p}}\mu(d\pmb{\xi_r}) \, \mu(d\pmb{\wt{\xi}_r})  \mu(d\pmb{\eta_{p-r}})  \mu(d\pmb{\wt{\eta}_{p-r}})  \\
&\quad \times (\F \mathfrak{f})(\pmb{s_r}, \pmb{t_{p-r}},\pmb{\eta_{p-r}},\pmb{\xi_r})   (\F \mathfrak{f})(\pmb{\wt{s}_r}, \pmb{w_{p-r}}, \pmb{\wt{\eta}_{p-r}}, - \pmb{\xi_r})| a + b |^{-1/2}  |a-\wt{b}|^{-1/2}    \\
&\quad     \times (\F \mathfrak{f}) (\pmb{v_r}, \pmb{\wt{t}_{p-r}}, \pmb{-\eta_{p-r}},\pmb{\wt{\xi}_r})   (    \F \mathfrak{f})(\pmb{\wt{v}_r},\pmb{\wt{w}_{p-r}},    - \pmb{\wt{\eta}_{p-r}} , -\pmb{\wt{\xi}_r})  | \wt{a} - b |^{-1/2} |\wt{ a} + \wt{b} |^{-1/2}  \\
&\quad  \times J_{1/2} \big( R | a + b | \big) J_{1/2}\big( R|a-\wt{b}| \big)J_{1/2}\big( R| \wt{a} - b | \big) J_{1/2}\big( R |\wt{ a} + \wt{b} | \big), 
 \end{align*}
where we use the following short-hand notation
  \begin{center}
  $a = \tau(\pmb{\xi_r}),  b=\tau(\pmb{\eta_{p-r}}), \wt{a} = \tau(\pmb{\wt{\xi}_r})$, $ \wt{b}=\tau(\pmb{\wt{\eta}_{p-r}})$,
 \end{center}
 and 
 \[
 J_{1/2}(x)=\sqrt{\frac{2}{\pi}}\ \frac{\sin{(x)}}{\sqrt{x}}, \  x\in \mathbb{R}_+
 \]
 is the Bessel function of first kind with order $1/2$.
 This is obtained in the same way as in \cite{NZ19BM} and we refer readers to this reference for more details. 

  Now we decompose  the integral in the spatial variable into two parts, and we write  for any given $\delta>0$,
 \[
 \mathcal{J}_R = \mathcal{J}_{1,R,\delta} +  \mathcal{J}_{2,R,\delta} := \int_{\R^{2p}} {\bf 1}_{\{ | a+ b| \geq \delta \}} + \int_{\R^{2p}} {\bf 1}_{\{ | a+ b| < \delta \}}.
 \]
 This leads to the decomposition
 \[
  \big\| g_{p,R}(t) \otimes_r g_{p,R}(t) \big\| _{\mathfrak{H}^{\otimes (2p-2r)}}^2  = (2\pi)^2( \mathcal{Y}_{1,R,\delta} +  \mathcal{Y}_{2,R,\delta}),
  \]
  where, for $k=1,2$,
  \[
 \mathcal{Y}_{k,R,\delta}:=  \int_{[0,t]^{4p}}  d\ell^{4p}  \left( \prod_{i=1}^r \gamma_0(s_i-\wt{s}_i)\gamma_0(v_i-\wt{v}_i)\right) \times \left( \prod_{j=1}^{p-r} \gamma_0(t_j-\wt{t}_j)\gamma_0(w_j-\wt{w}_j)\right)  \mathcal{J}_{k,R,\delta}.
\]
 The proof will be done in two steps.
 
 \medskip
 \noindent
 {\bf Step 1}:  We will show that for any fixed  $\delta >0$,   $\mathcal{Y}_{1,R,\delta}$ tends to zero as  $R\to \infty$.
First we   apply Cauchy-Schwarz inequality several times to get
 \begin{align*}
 \mathcal{J}_{1,R,\delta} &\leq 4 \left( \int_{ \{ | \tau(\pmb{\xi_p}) | \geq \delta   \} } \ell_R( \tau(\pmb{\xi_p})  )  \big\vert\F \mathfrak{f} \big\vert^2(\pmb{s_r}, \pmb{t_{p-r}},\pmb{\xi_p})   \mu(d\pmb{\xi_p})  \right)^{1/2} \\
 &\quad \times \left( \int_{  \R^{p}} \ell_R( \tau(\pmb{\xi_p})  )  \big\vert\F \mathfrak{f} \big\vert^2(\pmb{\wt{v}_r}, \pmb{\wt{w}_{p-r}},\pmb{\xi_p})   \mu(d\pmb{\xi_p})  \right)^{1/2} \\
  &\quad \times \left( \int_{  \R^{p}} \ell_R( \tau(\pmb{\xi_p})  )  \big\vert\F \mathfrak{f} \big\vert^2(\pmb{\wt{s}_r}, \pmb{w_{p-r}},\pmb{\xi_p})   \mu(d\pmb{\xi_p})  \right)^{1/2} \\
   &\quad \times \left( \int_{  \R^{p}} \ell_R( \tau(\pmb{\xi_p})  )  \big\vert\F \mathfrak{f} \big\vert^2(\pmb{v_r}, \pmb{\wt{t}_{p-r}},\pmb{\xi_p})   \mu(d\pmb{\xi_p})  \right)^{1/2},
 \end{align*}
 where $ \ell_R(x)=\frac{1}{2} |x|^{-1} J_{1/2}^2(R|x|)$ is introduced in \eqref{def:LR}.
 We will prove separately for cases $\mathbf{(H1)}$ and $\mathbf{(H2)}$ that  $\mathcal{Y}_{1,R,\delta} \to 0$ as $R\to \infty$.  \\

 \noindent{\textbf{Proof of $\mathcal{Y}_{1,R,\delta}\xrightarrow{R\to\infty} 0$ under} $\mathbf{(H1)}$:} 
By Cauchy-Schwarz  inequality again applied to the integration in time, we get
  \begin{align}
\mathcal{Y}_{1,R,\delta}\leq&  4\Bigg\{ \int_{[0,t]^{4p}} d\ell^{4p}  \left( \prod_{i=1}^r \gamma_0(s_i-\wt{s}_i)\gamma_0(v_i-\wt{v}_i)\right) \notag  \left( \prod_{j=1}^{p-r} \gamma_0(t_j-\wt{t}_j)\gamma_0(w_j-\wt{w}_j)\right) \\
& \qquad \times  \left( \int_{  \R^{p}} \ell_R( \tau(\pmb{\xi_p})  )  \big\vert\F \mathfrak{f} \big\vert^2(\pmb{\wt{v}_r}, \pmb{\wt{w}_{p-r}},\pmb{\xi_p})   \mu(d\pmb{\xi_p})  \right)     \notag \\
&\qquad \qquad\qquad \times \int_{ \{ |\tau(\pmb{\xi_p})| \geq \delta   \} } \ell_R( \tau(\pmb{\xi_p})  )  \big\vert\F \mathfrak{f} \big\vert^2(\pmb{s_r}, \pmb{t_{p-r}},\pmb{\xi_p})   \mu(d\pmb{\xi_p})  \Bigg\}^{1/2}    \notag\\
&\times  \Bigg\{ \int_{[0,t]^{4p}} d\ell^{4p}    \left( \prod_{i=1}^r \gamma_0(s_i-\wt{s}_i)\gamma_0(v_i-\wt{v}_i)\right) \notag \left( \prod_{j=1}^{p-r} \gamma_0(t_j-\wt{t}_j)\gamma_0(w_j-\wt{w}_j)\right) \\
& \qquad \times\left( \int_{  \R^{p}} \ell_R( \tau(\pmb{\xi_p})  )  \big\vert\F \mathfrak{f} \big\vert^2(\pmb{\wt{s}_r}, \pmb{w_{p-r}},\pmb{\xi_p})   \mu(d\pmb{\xi_p})  \right)  \notag \\
&\qquad \qquad\qquad \times  \int_{ \R^{p}} \ell_R( \tau(\pmb{\xi_p})  )  \big\vert\F \mathfrak{f} \big\vert^2(\pmb{v_r}, \pmb{\wt{t}_{p-r}},\pmb{\xi_p})   \mu(d\pmb{\xi_p})  \Bigg\}^{1/2}  \notag \\
& =:  4 V_{1,R,\delta} ^{1/2}     V_{2,R}  ^{1/2}.  \notag
 \end{align}
 For the term $V_{1,R,\delta}$,
 we have the estimate
  \begin{align}  \notag
 V_{1,R,\delta} &\leq   \Gamma_t^{2p} \left[   \int_{[0,t]^p} d\pmb{s_p}   \int_{ \{ | \tau(\pmb{\xi_p}) | \geq \delta   \} } \ell_R( \tau(\pmb{\xi_p})  )  \big\vert\F \mathfrak{f} \big\vert^2(\pmb{s_p},\pmb{\xi_p})   \mu(d\pmb{\xi_p})  \right]\\   \label{e1}
 &\qquad  \times    \left(\int_{[0,t]^{p}} d\pmb{t_p} \int_{  \R^{p}} \ell_R( \tau(\pmb{\xi_p})  )  \big\vert\F \mathfrak{f} \big\vert^2(\pmb{t_p},\pmb{\xi_p})   \mu(d\pmb{\xi_p}) \right)  =:   \Gamma_t^{2p} V_{11,R,\delta}  V_{12,R},
 \end{align}
 where we recall that $\Gamma_t= \int_{-t} ^t \gamma_0(s) ds$.
   We will prove that $V_{12,R}$ is uniformly bounded and $ V_{11,R,\delta}$ vanishes asymptotically as $R\rightarrow \infty$. 
  In view of   \eqref{expform},  making the   change of variables $t_j = t - s_j$ and $\eta_j =  \xi_1 +\dots + \xi_j $ for each $j=1,\dots, p$, with $\eta_0=0$, we obtain, in view of \eqref{F2},
  \begin{align*}
 V_{12,R}& =\frac{1}{p!} \int_{\Delta_p(t)}d\pmb{s_p}  \int_{  \R^{p}} \mu(d\pmb{\xi_p})\ell_R( \tau(\pmb{\xi_p})  ) \exp\left( - \sum_{j=1}^p (s_j -s_{j+1}) ( \xi_1 + \dots+ \xi_j )^2  \right) \\
&= \frac{1}{p!} \int_{\R} d\eta_p \ell_R(\eta_p)  \int_{\R^{p-1}}d\pmb{\eta_{p-1}} \int_{\text{SIM}_p(t)}d\pmb{w_p}    \prod_{j=1}^p e^{-w_j \eta_j^2  } \varphi(\eta_j - \eta_{j-1}) \\
&  =K_{1,p} (\ell_R,t).
\end{align*}
By inequality \eqref{F4} in Lemma \ref{LEM:H12} and \eqref{C1},   this implies
\begin{equation} \label{e2}
\sup_{R>1} V_{12,R} < \infty.
\end{equation}
     In the same way, we have,  for any $\e>0$, using \eqref{F4} and \eqref{uni-bound},
\begin{align}  \notag
 V_{11,R,\delta}  &\le   K_{1,p} (\ell_R \mathbf{1}_{(-\delta, \delta)^c},t) \le  C
   \int_{\{ | \eta|\geq \delta \}  } d \eta \ell_R( \eta)  (1+ \varphi(\eta)) \\  \label{F13}
& \le  C  \int_{\{ |\eta| \ge R \delta\}} d\eta \frac {\sin^2(\eta)}{ \pi \eta^2} + C\left( \e + \frac {C(\e)} R \right).
 \end{align}
 for some constant $C>0$.  So, $\lim_{R\rightarrow \infty} V_{11,R,\delta} =0$ and,  therefore, from \eqref{F13},  \eqref{e1} and \eqref{e2} we have proved  that
\[
V_{1,R,\delta} \xrightarrow{R\to\infty} 0.
\]
For the term $V_{2,R}$, note that
 $
 V_{2,R} \leq  \Gamma_t^{2p} V_{12,R}^2,
 $
so  we have the uniform boundedness of $V_{2,R}$ over   $R>1$. Thus,  we completed the proof of $\mathcal{Y}_{1,R,\delta}\xrightarrow{R\to\infty} 0$ under $\mathbf{(H1)}$.

\medskip

 \noindent{\textbf{Proof of $\mathcal{Y}_{1,R,\delta}\xrightarrow{R\to\infty} 0$ under} $\mathbf{(H2)}$:}   Using the embedding result \eqref{F12} and Cauchy-Schwarz inequality, we can write
 \begin{align*}
 \mathcal{Y}_{1,R,\delta}& \leq   C_{H_0}^p\Bigg[ \int_{[0,t]^{2p}}d\pmb{s_p}d\pmb{t_p}\left( \int_{   \{ |\tau(\pmb{\xi_p})| \geq \delta   \}} \ell_R( \tau(\pmb{\xi_p})  )  \big\vert\F \mathfrak{f} \big\vert^2(\pmb{t_p},\pmb{\xi_p})   \mu(d\pmb{\xi_p}) \right)^{\frac{1}{2H_0}} \\
 &\qquad\qquad\times \left( \int_{  \R^{p}} \ell_R( \tau(\pmb{\xi_p})  )  \big\vert\F \mathfrak{f} \big\vert^2(\pmb{s_p},\pmb{\xi_p})   \mu(d\pmb{\xi_p})   \right)^{\frac{1}{2H_0}}\Bigg]^{H_0}\\
&\quad\times \Bigg[\int_{[0,t]^{2p}}d\pmb{s_p}d\pmb{t_p}\left( \int_{ \R^p} \ell_R( \tau(\pmb{\xi_p})  )  \big\vert\F \mathfrak{f} \big\vert^2(\pmb{t_p},\pmb{\xi_p})   \mu(d\pmb{\xi_p}) \right)^{\frac{1}{2H_0}} \\
 &\qquad\qquad\times \left( \int_{  \R^{p}} \ell_R( \tau(\pmb{\xi_p})  )  \big\vert\F \mathfrak{f} \big\vert^2(\pmb{s_p},\pmb{\xi_p})   \mu(d\pmb{\xi_p})   \right)^{\frac{1}{2H_0}}\Bigg]^{H_0}\\
 &=   C_{H_0}^pT_{1,R,\delta}^{H_0} T_{2,R}^{3H_0},
 \end{align*}
where
\begin{align*}
T_{1,R,\delta} :=&  \int_{[0,t]^{p}}d\pmb{t_p}\left( \int_{   \{ |\tau(\pmb{\xi_p})| \geq \delta   \}} \ell_R( \tau(\pmb{\xi_p})  )  \big\vert\F \mathfrak{f} \big\vert^2(\pmb{t_p},\pmb{\xi_p}),  \mu(d\pmb{\xi_p}) \right)^{\frac{1}{2H_0}}   ~\text{and} \\
 T_{2,R}: =& \int_{[0,t]^{p}}d\pmb{t_p}\left( \int_{  \R^p} \ell_R( \tau(\pmb{\xi_p})  )  \big\vert\F \mathfrak{f} \big\vert^2(\pmb{t_p},\pmb{\xi_p})   \mu(d\pmb{\xi_p}) \right)^{\frac{1}{2H_0}}. 
 \end{align*}
From \eqref{expform}, a change of variables and   inequalities \eqref{F6} and  \eqref{C1}, we can easily get
 \begin{align}  \notag 
T_{2,R}= & (p!)^{-\frac{1}{H_0}}  \int_{[0,t]^p}d\pmb{s_p}\left(\int_{\R^p}\mu(d\pmb{\xi_p})\ell_R(\tau(\pmb{\xi_p}))\exp\left[-\text{Var}\sum_{j=1}^pB_{s_j}\xi_j\right]\right)^{\frac{1}{2H_0}}\\  \label{eqn-V2}
&= (p!)^{-\frac{1}{H_0}}  K_{2,p} (\ell_R, t) \le C
\end{align}
for all $R>1$. The term  $ T_{1,R,\delta}$ can be estimated as follows
\begin{align*}
 T_{1,R,\delta} &= (p!)^{-\frac{1}{H_0}}  \int_{[0,t]^p}d\pmb{s_p}\left(\int_{ \{ |\eta_p| \ge \delta\} }\mu(d\pmb{\xi_p})\ell_R(\tau(\pmb{\xi_p}))\exp\left[-\text{Var}\sum_{j=1}^pB_{s_j}\xi_j\right]\right)^{\frac{1}{2H_0}} \\
 &=  (p!)^{-\frac{1}{H_0}}  K_{2,p} (\ell_R \mathbf{1}_{(-\delta, \delta)^c}, t)\\
&\le 
  C \int_{\{|\eta|\geq \delta\}}d\eta \ell_R(\eta) (1+       |\eta|^{1-2H_1}  ),
\end{align*}
which converges to zero as $R\to \infty$ as we have already noted.
  Therefore, combining the calculations on $T_{1,R,\delta}$ and $T_{2,R}$, we  show that $\mathcal{Y}_{1,R,\delta}\xrightarrow{R\to\infty} 0$ under $\mathbf{(H2)}$.

 \medskip
 \noindent
 {\bf Step 2}:  We will show that  
 \begin{equation} \label{ECU8}
 \lim_ {\delta \rightarrow 0} \limsup_{R\rightarrow  \infty} \mathcal{Y}_{2,R,\delta} =0.
 \end{equation}
 Using Cauchy-Schwarz multiple times and changing of variables, we  obtain
 \begin{align}
 &\mathcal{J}_{2,R,\delta} \leq  4 \int_{\{  | a+b| < \delta  \}}  \mu(d\pmb{\xi_r})  \mu(d\pmb{\eta_{p-r}}) \sqrt{ \ell_R(a+b) } ~ \F\mathfrak{f} \big(  \pmb{s_r}, \pmb{t_{p-r}},   \pmb{ \eta_{p-r}}, \pmb{\xi_r} \big)\notag \\
& \quad  \times \left(   \int_{\R^{p}} \mu(d\pmb{\wt{\xi}_r})  \mu(d\pmb{\wt{\eta}_{p-r}})  \ell_R(\wt{a}+\wt{b}) \big\vert \F\mathfrak{f} \big\vert^2\big(  \pmb{\wt{v}_r}, \pmb{\wt{w}_{p-r}},   \pmb{ \wt{\eta}_{p-r}}, \pmb{\wt{\xi}_r} \big)   \right)^{1/2}  \notag   \\
& \quad \times  \left[   \int_{\R^{p}} \mu(d\pmb{\wt{\xi}_r})\mu(d\pmb{\wt{\eta}_{p-r}}) \ell_R(\wt{a}+ b) \ell_R(a+\wt{b}) \big\vert\F \mathfrak{f} \big\vert^2\big(  \pmb{v_r}, \pmb{\wt{t}_{p-r}},   \pmb{ \eta_{p-r}}, \pmb{\wt{\xi}_r} \big)  \big\vert\F \mathfrak{f}\big\vert^2\big(  \pmb{\wt{s}_r}, \pmb{w_{p-r}}, \pmb{\wt{\eta}_{p-r}}, \pmb{\xi_r}\big)   \right]^{1/2}\notag \\
&   \qquad   \leq 4\Bigg[  \left(   \int_{\R^{p}} \mu(d\pmb{\wt{\xi}_p})  \ell_R(\tau(\pmb{\wt{\xi}_p})) \big\vert \F\mathfrak{f} \big\vert^2\big(  \pmb{\wt{v}_r}, \pmb{\wt{w}_{p-r}},   \pmb{\wt{\xi}_p} \big)   \right)  \notag \\
&  \quad \times  \left( \int_{\{  | \tau(\pmb{\xi_p}) | < \delta  \}}  \mu(d\pmb{\xi_p})   \ell_R( \tau(\pmb{\xi_p}))  \big\vert \F\mathfrak{f}\big\vert^2 \big(  \pmb{s_r}, \pmb{t_{p-r}},   \pmb{\xi_p} \big) \right)\Bigg]^{1/2} \notag\\
& \quad \times \Bigg[  \int_{ \{ | a+b| < \delta\} \times\R^{p}}  \mu(d\pmb{\xi_r})  \mu(d\pmb{\eta_{p-r}})   \mu(d\pmb{\wt{\xi}_r})  \mu(d\pmb{\wt{\eta}_{p-r}}) \notag  \\
&\quad \times \big\vert\F \mathfrak{f}\big\vert^2 \big(  \pmb{v_r}, \pmb{\wt{t}_{p-r}},   \pmb{\eta_{p-r}}, \pmb{\wt{\xi}_r} \big)   \big\vert \F\mathfrak{f}\big\vert^2 \big(  \pmb{\wt{s}_r}, \pmb{w_{p-r}},   \pmb{\wt{ \eta}_{p-r}}, \pmb{\xi_r} \big)       \ell_R(\wt{a}+ b) \ell_R(a+\wt{b})        \Bigg]^{1/2}\notag\\
& \qquad =: 4U_{1,R,\delta} ^{1/2} U_{2,R,\delta} ^{1/2}.    \label{F13-1}
  \end{align}
  
   In what follows, we are going to  prove separately in case $(\mathbf{H1})$ and in case $(\mathbf{H2})$ that \eqref{ECU8} holds.

\medskip
 \noindent{\bf Proof of  \eqref{ECU8} under  $\mathbf{(H1)}$}:  From   \eqref{F13-1}, using Cauchy-Schwarz inequality again for the integration in time, we have
 \[ 
     \mathcal{Y}_{2,R,\delta} \leq 4  \sqrt{  \mathfrak{X}_{1,R,\delta}  \mathfrak{X}_{2,R,\delta}  }\, ,
 \]
 where, for $k=1,2$,
\[
  \mathfrak{X}_{k,R,\delta} := \int_{[0,t]^{4p}}  d\ell^{4p}  \left( \prod_{i=1}^r \gamma_0(s_i-\wt{s}_i)\gamma_0(v_i-\wt{v}_i)\right) \left( \prod_{j=1}^{p-r} \gamma_0(t_j-\wt{t}_j)\gamma_0(w_j-\wt{w}_j)\right)  U_{k,R,\delta}.
  \]
 One can show by the same arguments as before that, for all $R>1$ and $\delta>0$,
 \begin{align*}
 \mathfrak{X}_{1,R,\delta}\leq \Gamma_t^{2p}V_{12,R}^2\leq C.
 \end{align*}
  Now we write 
  \begin{align*}
  \mathfrak{X}_{2,R,\delta} &\leq \Gamma_t^{2p}\int_{[0,t]^{2p}} d\pmb{\wt{s_r} }d\pmb{\wt{t}_{p-r} } d\pmb{v_r}d\pmb{w_{p-r}}  \int_{ \{ | a+b| < \delta\} \times\R^{p}}  \mu(d\pmb{\xi_r})  \mu(d\pmb{\eta_{p-r}})   \mu(d\pmb{\wt{\xi}_r}) \mu(d\pmb{\wt{\eta}_{p-r}})     \\
&\qquad \times   \big\vert\F \mathfrak{f}\big\vert^2 \big(  \pmb{v_r}, \pmb{\wt{t}_{p-r}},   \pmb{\eta_{p-r}}, \pmb{\wt{\xi}_r} \big)   \big\vert \F\mathfrak{f}\big\vert^2 \big(  \pmb{\wt{s}_r}, \pmb{w_{p-r}},   \pmb{\wt{ \eta}_{p-r}}, \pmb{\xi_r} \big)     \ell_R(\wt{a}+ b) \ell_R(a+\wt{b}) \\
&=\Gamma_t^{2p}   \int_{ \R^{2p}}  \mu(d\pmb{\xi_p})  \mu(d\pmb{\wt{\xi}_{p}})  {\bf 1}_{ \{  | \xi_1 + \dots+ \xi_r  + \wt{\xi}_{r+1} + \dots+ \wt{\xi}_{p}   | < \delta\} }  \ell_R\big( \tau(\pmb{\xi_p}) \big)    \ell_R\big( \tau(\pmb{\wt{\xi}_p}) \big)   \\  
&\qquad  \times  \left( \int_{[0,t]^p} d\pmb{s_p}    \big\vert\F \mathfrak{f}\big\vert^2 \big(  \pmb{s_p},    \pmb{\wt{\xi}_p} \big) \right) \left(  \int_{[0,t]^p}  d\pmb{t_{p}}     \big\vert\F \mathfrak{f}\big\vert^2 \big(  \pmb{t_p},   \pmb{\xi_p} \big) \right)    . 
          \end{align*}
Using  \eqref{expform} and a change of variables in time,  we can rewrite the last  expression  as follows, with $\eta_0=\wt{\eta}_0=0$, 
\begin{align} % \label{lasteq}
\mathfrak{X}_{2,R,\delta} & \le  \frac{\Gamma_t^{2p}}{(p!)^4}   \int_{ \R^{2p}}  \mu(d\pmb{\xi_p})  \mu(d\pmb{\wt{\xi}_{p}})  {\bf 1}_{ \{  | \xi_1 + \dots+ \xi_r  + \wt{\xi}_{r+1} + \dots+ \wt{\xi}_{p}   | < \delta\} } \ell_R\big( \tau(\pmb{\xi_p}) \big)    \ell_R\big( \tau(\pmb{\wt{\xi}_p}) \big) \notag  \\  
&  \qquad\qquad \times    \int_{[0,t]^{2p}} d\pmb{s_p}   d\pmb{t_p}   \exp\Big( -{\rm Var} \sum_{j=1}^p   B_{s_j }   \wt{\xi}_j\Big)   \exp\Big( -{\rm Var} \sum_{j=1}^p   B_{t_j }   \xi_j\Big) \notag\\
&=  \frac{\Gamma_t^{2p}}{(p!)^2}  \int_{ \R^{2p}}d\pmb{\eta_p}d\pmb{\wt{\eta}_p}\left(  \prod_{j=1}^p\varphi(\eta_j-\eta_{j-1})\varphi(\wt{\eta}_j-\wt{\eta}_{j-1})  \right)   {\bf 1}_{ \{  | \eta_r + \wt{\eta}_p-\wt{\eta}_r  | < \delta\} } \ell_R\big( \eta_p\big)    \ell_R\big( \wt{\eta}_p \big) \notag  \\  
&\qquad \qquad \times    \int_{\text{SIM}_p(t)^{2}} d\pmb{w_p}   d\pmb{\wt{w}_p} \prod_{j=1}^pe^{-w_j\eta_j^2}e^{-\wt{w}_j\wt{\eta}^2_j}\notag\\
&\leq   \int_{G(\delta)}  d\pmb{\eta_p}   d\pmb{\wt{\eta}_p}   \Phi(\pmb{\eta_p}, \pmb{\wt{\eta}_p}), \notag
\end{align}
where  $G(\delta)=\{ (\pmb{\eta_p}, \pmb{\wt{\eta}_p}) \in \R^{2p}: | \eta_r + \wt{\eta}_p-\wt{\eta}_r  | < \delta\}$ and
\begin{align*}
\Phi(\pmb{\eta_p}, \pmb{\wt{\eta}_p}) &:
= \frac{\Gamma_t^{2p}\kappa_0^{2(p-1)}}{(p!)^2}  \sum_{\pmb{\beta},\, \pmb{\wt{\beta}}\in\mathcal{A}_p}  \left(  \prod_{j=1}^p\varphi(\eta_j)^{\beta_j}\varphi(\wt{\eta}_j)^{\wt{\beta}_j}  \right)   \ell_R\big( \eta_p\big)    \ell_R\big( \wt{\eta}_p \big) \notag  \\
& \qquad \times     \int_{\text{SIM}_p(t)^{2}} d\pmb{w_p}   d\pmb{\wt{w}_p} \prod_{j=1}^pe^{-w_j\eta_j^2}e^{-\wt{w}_j\wt{\eta}^2_j}
 \end{align*}
 Then, we decompose the set $G(\delta)$ as follows
 \[
 G(\delta) \subset \{  | \eta_r  -\wt{\eta}_r  |   <2\delta\} \cup \{ |\wt{\eta}_p | \ge \delta\}.
 \]
  From the estimation \eqref{F13}  for the term $V_{11,R,\delta}$, it follows that, for any $\delta>0$, 
  \begin{equation} \label{eq1}
  \int_{\{ |\wt{\eta}_p | \ge \delta\}}  d\pmb{\eta_p}   d\pmb{\wt{\eta}_p}   \Phi(\pmb{\eta_p}, \pmb{\wt{\eta}_p})
    \xrightarrow{R\to\infty}  0.
    \end{equation}
 On the other hand,  from the  proofs of the estimates \eqref{F1} and \eqref{F6} in Lemma  \ref{LEM:H12}, we obtain  
 \begin{align*}
  \int_{\{ | \eta_r  -\wt{\eta}_r  |   <2\delta\}}  d\pmb{\eta_p}   d\pmb{\wt{\eta}_p}   \Phi(\pmb{\eta_p}, \pmb{\wt{\eta}_p})
  & \leq   
 C \sup_{\beta, \wt{\beta} \in \{ 0,1,2\}}   \int_{\R^2}\int_{[0,t]^2}d\eta_rd\wt{\eta}_rdw_rd\wt{w}_r\varphi(\eta_r)^{\beta}e^{-w_r\eta_r^2}\varphi(\wt{\eta})^{\wt{\beta}}e^{-\wt{w}_r\wt{\eta}_r^2} \\
 & \qquad \times  \1_{ \{  | \eta_r -\wt{\eta}_r  | < 2\delta\} }.
 \end{align*}
 Note also that for any $ \beta, \wt{\beta} \in \{ 0,1,2\}$,
 \[
 \int_{\R^2}\int_{[0,t]^2}d\eta_rd\wt{\eta}_rdw_rd\wt{w}_r\varphi(\eta_r)^{\beta}e^{-w_r\eta_r^2}\varphi(\wt{\eta}_r)^{\wt{\beta}}e^{-\wt{w}_r\wt{\eta}_r^2} \leq C.
 \]
Since $ {\bf 1}_{ \{  | \eta_r -\wt{\eta}_r  | < 2\delta\} }\to 0$ as $\delta\to 0$, we deduce
\begin{equation} \label{eq2}
 \lim_ {\delta \rightarrow 0} \limsup_{R\rightarrow  \infty} 
 \int_{\{ | \eta_r  -\wt{\eta}_r  |   <2\delta\}}  d\pmb{\eta_p}   d\pmb{\wt{\eta}_p}   \Phi(\pmb{\eta_p}, \pmb{\wt{\eta}_p})
 =0.
 \end{equation}
 Thus, \eqref{eq1} and \eqref{eq2} allow us to
 complete the proof of \eqref{ECU8}  in case {\bf (H1)}.

 \medskip

  \noindent{\textbf{Proof of \eqref{ECU8} under} $\mathbf{(H2)}$:} From  \eqref{F13-1}, the embedding result \eqref{F12} and Cauchy-Schwarz inequality, it follows that
  \begin{align*}
 \mathcal{Y}_{2,R,\delta}&\leq C\left\{\int_{[0,t]^{2p}} d\ell ^{2p} U_{1,R,\delta}^{\frac{1}{2H_0}}\right\}^{H_0}\left\{\int_{[0,t]^{2p}} d\ell^{2p}U_{2,R,\delta}^{\frac{1}{2H_0}}\right\}^{H_0}=:C\mathfrak{Z}_{1,R,\delta}^{H_0}\mathfrak{Z}_{2,R,\delta}^{H_0},
  \end{align*}
  where
  \begin{align*}
  \mathfrak{Z}_{1,R,\delta}&=\int_{[0,t]^{2p}}d\pmb{s_p}d\pmb{t_p}\Big(\int_{\R^p}\mu(d\pmb{\xi_p})\ell_R(\tau(\pmb{\xi_p}))|\F \mathfrak{f}|^2(\pmb{s_p},\pmb{\xi_p})\\
  & \qquad \times\int_{\{|\tau(\pmb{\xi_p})|<\delta\}}\mu(d\pmb{\xi_p})\ell_R(\tau(\pmb{\xi_p}))|\F \mathfrak{f}|^2(\pmb{t_p},\pmb{\xi_p})\Bigg)^{\frac{1}{2H_0}} \\
  \mathfrak{Z}_{2,R,\delta}&=\int_{[0,t]^{2p}}d\pmb{s_p}d\pmb{t_p}\Bigg(\int_{ \{ | a+b| < \delta\} \times\R^{p}}  \mu(d\pmb{\xi_r})  \mu(d\pmb{\eta_{p-r}})   \mu(d\pmb{\wt{\xi}_r})  \mu(d\pmb{\wt{\eta}_{p-r}})    \\
&   \qquad \times
  \big\vert\F \mathfrak{f}\big\vert^2 \big(  \pmb{s_p},\pmb{\eta_{p-r}}, \pmb{\wt{\xi}_r} \big)   \big\vert \F\mathfrak{f}\big\vert^2 \big(  \pmb{\wt{t}_p},   \pmb{\wt{ \eta}_{p-r}}, \pmb{\xi_r} \big)     \ell_R(\wt{a}+ b) \ell_R(a+\wt{b})\Bigg)^{\frac{1}{2H_0}}.
  \end{align*}
  For the term $\mathfrak{Z}_{1,R,\delta} $,  we deduce from \eqref{eqn-V2} that
  \begin{align*}
\mathfrak{Z}_{1,R,\delta}&\leq \left( \int_{[0,t]^{p}}d\pmb{s_p}\left(\int_{\R^p}\mu(d\pmb{\xi_p})\ell_R(\tau(\pmb{\xi_p}))|\F \mathfrak{f}|^2(\pmb{s_p},\pmb{\xi_p})\right)^{\frac{1}{2H_0}}\right)^2=T_{2,R}^2\leq C 
  \end{align*}
for all  $R>1$ and 
$ \delta>0$,    which shows the  uniform boundedness of $\mathfrak{Z}_{1,R,\delta}$ over $R>1$ and $\delta>0$.

Now let us consider the term $\mathfrak{Z}_{2,R,\delta}$. Similar to the analysis on the term $\mathcal{Y}_{1,R,\delta}$, one can get, with $\eta_0=\wt{\eta}_0=0$,
   \begin{align*}
\mathfrak{Z}_{2,R,\delta}  &=(p!)^{-\frac{2(1-H_0)}{H_0}}\int_{\text{SIM}_p(t)^2}d\pmb{w_p}d\pmb{\wt{w}_p}\Bigg(\int_{\R^{2p}}d\pmb{\eta_p}d\pmb{\wt{\eta}_p}\1_{\{|\eta_r+\wt{\eta}_p-\wt{\eta}_r|<\delta\}}\ell_R(\eta_p)\ell_R(\wt{\eta}_p)\\
&\qquad \times \prod_{j=1}^p|\eta_j-\eta_{j-1}|^{1-2H_1}e^{-w_j\eta_j^2} \prod_{j=1}^p|\wt{\eta}_j-\wt{\eta}_{j-1}|^{1-2H_1}e^{-\wt{w}_j\wt{\eta}_j^2}\Bigg)^{\frac{1}{2H_0}}\\
  &=(p!)^{-\frac{2(1-H_0)}{H_0}}\int_{\text{SIM}_p(t)^2}d\pmb{w_p}d\pmb{\wt{w}_p}\Bigg(\int_{\R^{2p}}d\pmb{\eta_p}d\pmb{\wt{\eta}_p}\1_{\{|\eta_r+\wt{\eta}_p-\wt{\eta}_r|<\delta\}}\ell_R(\eta_p)\ell_R(\wt{\eta}_p)\\
  & \qquad \times \prod_{j=1}^p|\eta_j-\eta_{j-1}|^{1-2H_1}e^{-w_j\eta_j^2} \left(  \1_{\{| \wt{\eta}_p|<\delta\}}+ \1_{\{| \wt{\eta}_p|\geq\delta\}}\right)\prod_{j=1}^p|\wt{\eta}_j-\wt{\eta}_{j-1}|^{1-2H_1}e^{-\wt{w}_j\wt{\eta}_j^2}\Bigg)^{\frac{1}{2H_0}}\\
  &=:\mathfrak{Z}_{21,R,\delta}+\mathfrak{Z}_{22,R,\delta},
   \end{align*}
   where 
 \begin{align*}
\mathfrak{Z}_{21,R,\delta}&= (p!)^{-\frac{2(1-H_0)}{H_0}} \int_{\text{SIM}_p(t)^2}d\pmb{w_p}d\pmb{\wt{w}_p}\Bigg(\int_{\R^{2p}}d\pmb{\eta_p}d\pmb{\wt{\eta}_p}\1_{\{|\eta_r+\wt{\eta}_p-\wt{\eta}_r|<\delta\}}\1_{\{| \wt{\eta}_p|<\delta\}}\ell_R(\eta_p)\ell_R(\wt{\eta}_p)\\
  &\qquad\qquad\qquad\qquad\qquad \times\prod_{j=1}^p|\eta_j-\eta_{j-1}|^{1-2H_1}e^{-w_j\eta_j^2}\prod_{j=1}^p|\wt{\eta}_j-\wt{\eta}_{j-1}|^{1-2H_1}e^{-\wt{w}_j\wt{\eta}_j^2}\Bigg)^{\frac{1}{2H_0}}
 \end{align*}
 and 
 \begin{align*}
\mathfrak{Z}_{22,R,\delta}&= (p!)^{-\frac{2(1-H_0)}{H_0}}  \int_{\text{SIM}_p(t)^2}d\pmb{w_p}d\pmb{\wt{w}_p}\Bigg(\int_{\R^{2p}}d\pmb{\eta_p}d\pmb{\wt{\eta}_p}\1_{\{|\eta_r+\wt{\eta}_p-\wt{\eta}_r|<\delta\}}\1_{\{| \wt{\eta}_p|\geq\delta\}}\ell_R(\eta_p)\ell_R(\wt{\eta}_p)\\
  &\qquad\qquad\qquad\qquad\qquad \times\prod_{j=1}^p|\eta_j-\eta_{j-1}|^{1-2H_1}e^{-w_j\eta_j^2}\prod_{j=1}^p|\wt{\eta}_j-\wt{\eta}_{j-1}|^{1-2H_1}e^{-\wt{w}_j\wt{\eta}_j^2}\Bigg)^{\frac{1}{2H_0}}.
 \end{align*}
For $\mathfrak{Z}_{21,R,\delta}$, we have
 \begin{align*}
\mathfrak{Z}_{21,R,\delta}&\leq C  \sum_{\pmb{\beta},\, \pmb{\wt{\beta}}\in\mathcal{A}_p}\int_{\text{SIM}_p(t)^2}d\pmb{w_p}d\pmb{\wt{w}_p}\Bigg(\int_{\R^{2p}}d\pmb{\eta_p}d\pmb{\wt{\eta}_p}\1_{\{|\eta_r-\wt{\eta}_r|<2\delta\}}  \ell_R(\eta_p)\ell_R(\wt{\eta}_p)\\
  &\qquad  \times\prod_{j=1}^p|\eta_j|^{\beta_j(1-2H_1)}e^{-w_j\eta_j^2}\prod_{j=1}^p|\wt{\eta}_j|^{\wt{\beta}_j(1-2H_1)}e^{-\wt{w}_j\wt{\eta}_j^2}\Bigg)^{\frac{1}{2H_0}}\\
  &\leq C  \sum_{\pmb{\beta},\, \pmb{\wt{\beta}}\in\mathcal{A}_p}\Bigg[
  \left(
  \int_{[0,t]}dw_p\left(\int_{\R}d\eta_p\ell_R(\eta_p)|\eta_p|^{\beta_p(1-2H_1)}e^{-w_p\eta_p^2}\right)^{\frac{1}{2H_0}}\right)^2\\
  &\qquad \times\left(\int_{\text{SIM}_{p-2}(t)}d\pmb{w_{p-2}}\left(\int_{\R^{(p-2)}}d\pmb{\eta_{p-2}}\prod_{j\in \{1, \dots, p-1\}, j\neq r}|\eta_j|^{\beta_j(1-2H_1)}e^{-w_j\eta_j^2}\right)^{\frac{1}{2H_0}}\right)^2\\
  &\qquad\times \int_{[0,t]^2}dw_rd\wt{w}_r\left(\int_{\R^2}d\eta_rd\wt{\eta}_r\1_{\{|\eta_r- \wt{\eta}_r|<2\delta\}}|\eta_r|^{\beta_r(1-2H_1)}|\wt{\eta}_r|^{\wt{\beta}_r(1-2H_1)}e^{-w_r\eta_r^2}e^{-\wt{w}_r\wt{\eta}_r^2}\right)^{\frac{1}{2H_0}}\Bigg].
 \end{align*}
 Using some previous calculations, we  obtain the following bound
\[
 \sup_{R>1} \mathfrak{Z}_{21,R,\delta} \le
 C
 \int_{[0,t]^2}dvds  \left(\int_{\R^2}dx dy \1_{\{|x-y|<2\delta\}}|x|^{\beta_r(1-2H_1)}|y|^{\wt{\beta}_r(1-2H_1)}e^{-vx ^2}e^{- sy ^2}\right)^{\frac{1}{2H_0}}
 \]
 where $\beta_r, \wt{\beta}_r \in \{ 0,1,2\}$ and this proves, by the dominated convergence theorem, that
 \[
 \lim_{\delta \rightarrow 0} \limsup_{R>1} \mathfrak{Z}_{21,R,\delta}=0.
 \]
 On the other hand, the term  $\mathfrak{Z}_{22,R,\delta}$ can be treated as $T_{1,R,\delta}$ and we obtain  
 \[
 \lim_{R\rightarrow \infty} \mathfrak{Z}_{22,R,\delta}=0,~\text{for any fixed $\delta>0$}.
 \]
Hence, the  proof of \eqref{ECU8} under {\bf (H2)} is completed and this ends the proof of Proposition  \ref{prop2}.
\end{proof}

 \section{Proof of tightness} \label{SEC5}
 
 In this section, we will give the proof of the tightness by following the  strategy proposed in \cite[Section 3.3]{NZ19BM}. 
 \begin{proposition}
For any fixed $T>0$, any $0< s< t \leq T \leq R$ and any  integer $k\geq 2$ 
  \begin{align}\label{G00}
  \frac{1}{\sqrt{R}}\big\|  A_t(R) - A_s(R)\big\|_{k}\leq C  \vert t-s\vert^{1/2},
  \end{align}
  where $C=C_{T,k}$ is a constant that depends  on $T$ and  $k$. 
 \end{proposition}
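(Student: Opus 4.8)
The plan is to combine the Wiener chaos decomposition of $A_t(R)-A_s(R)$ with the hypercontractivity bound \eqref{HYPER}, and then to recycle the frequency/time estimates of Lemma \ref{LEM:H12}; the only genuinely new point is the extraction of one power of $|t-s|$. Writing $A_t(R)-A_s(R)=\sum_{p\ge 1}I_p^W(g_p)$ with $g_p=\int_{-R}^R\big(f_{t,x,p}-f_{s,x,p}\big)dx\in\H^{\odot p}$, I would first apply \eqref{HYPER} together with Minkowski's inequality to reduce matters to a per-chaos estimate:
\[
\frac{1}{\sqrt{R}}\big\|A_t(R)-A_s(R)\big\|_k\le \sum_{p=1}^\infty (k-1)^{p/2}\,\frac{1}{\sqrt{R}}\big\|\Pi_pA_t(R)-\Pi_pA_s(R)\big\|_2 .
\]
It then suffices to produce numbers $b_p$ with $\tfrac1R\big\|\Pi_pA_t(R)-\Pi_pA_s(R)\big\|_2^2\le b_p^2\,|t-s|$ and $\sum_p(k-1)^{p/2}b_p<\infty$; the constant in \eqref{G00} is then $C_{T,k}=\sum_p(k-1)^{p/2}b_p$.

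The key structural observation, which makes the increment tractable, is that by \eqref{expform} the Fourier transform $(\F\mathfrak{f})(\pmb{s_p},\pmb{\xi_p})$ does \emph{not} depend on $t$ except through the support $\pmb{s_p}\in[0,t]^p$ of the time variables. Consequently the increment is a pure time-domain restriction,
\[
\F\big(f_{t,x,p}-f_{s,x,p}\big)(\pmb{s_p},\pmb{\xi_p})=\tfrac1{p!}\,e^{-\i x\tau(\pmb{\xi_p})}\,\E\Big[\exp\big(-\i\textstyle\sum_{j=1}^p B_{s_j}\xi_j\big)\Big]\big(\mathbf{1}_{[0,t]^p}-\mathbf{1}_{[0,s]^p}\big)(\pmb{s_p}),
\]
supported on $\{\pmb{s_p}\in[0,t]^p:\max_j s_j>s\}$. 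By the isometry, $\big\|\Pi_pA_t(R)-\Pi_pA_s(R)\big\|_2^2=p!\,\|g_p\|_{\H^{\otimes p}}^2$; expanding this $\H^{\otimes p}$-norm exactly as in \eqref{acexp} and integrating $x,y$ over $[-R,R]$ to produce the factor $R\,\ell_R(\tau(\pmb{\xi_p}))$ (cf. \eqref{def:LR}), I arrive at a bound of the form
\[
\frac{1}{2R}\big\|\Pi_pA_t(R)-\Pi_pA_s(R)\big\|_2^2\le \frac{C}{p!}\int_{[0,t]^{2p}}\!\!\!\Big(\prod_{j=1}^p\gamma_0(s_j-r_j)\Big)\mathbf{1}_{\{\max s_j>s\}}\mathbf{1}_{\{\max r_j>s\}}\,\Xi_R(\pmb{s_p},\pmb{r_p})\,d\pmb{s_p}\,d\pmb{r_p},
\]
where $\Xi_R$ denotes precisely the frequency integrand (with weight $\ell_R(\tau(\pmb{\xi_p}))$) already estimated in Proposition \ref{PROP:COV1}.

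To extract $|t-s|$, I would bound $\mathbf{1}_{\{\max s_j>s\}}\mathbf{1}_{\{\max r_j>s\}}\le\sum_{m,m'=1}^p\mathbf{1}_{\{s_m>s\}}\mathbf{1}_{\{r_{m'}>s\}}$ and, by symmetry, reduce to peeling off a single time variable constrained to $(s,t)$, exactly as in the peeling step \eqref{F5}. In case \textbf{(H1)}, the diagonal terms $m=m'$ contribute $\int_s^t\!\int_s^t\gamma_0(s_m-r_m)\,ds_m\,dr_m\le |t-s|\,\Gamma_t$ (local integrability of $\gamma_0$), the off-diagonal ones the even smaller factor $|t-s|^2\le T|t-s|$, and the remaining $p-1$ integrations are controlled by $K_{1,p-1}(\ell_R,t)$; Lemma \ref{LEM:H12} via \eqref{F4} and the uniform bound \eqref{C1} then give $b_p^2\le C^p(8\kappa_0\Gamma_t C_N)^{p-1}$. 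In case \textbf{(H2)} I would first invoke the embedding \eqref{F12} in the time variables, as in the \textbf{(H2)} part of Proposition \ref{PROP:COV1}, reducing the time integral to a $K_{2,p}$-type quantity carrying the domain restriction; localizing one time variable in $(s,t)$ costs $|t-s|^{2H_0}\le T^{2H_0-1}|t-s|$ (this is where $H_0>1/2$ enters), and \eqref{F6} with \eqref{C1} controls the rest, yielding a $b_p^2$ with the same $\Gamma$-function decay as in \eqref{view331}.

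Finally, summability holds by the same considerations as in Proposition \ref{PROP:COV1}: in \textbf{(H1)} one chooses $N$ so large that $8\kappa_0\Gamma_t C_N<(k-1)^{-1}$ (cf. the choice below \eqref{F15}), making $\sum_p(k-1)^{p/2}b_p$ a convergent geometric series; in \textbf{(H2)} the $\Gamma$-factors force $b_p$ to decay faster than geometrically, so the series converges for every $k$. This establishes \eqref{G00}. The main obstacle is the \emph{simultaneous} control of two competing demands — squeezing out exactly one factor $|t-s|$ while preserving the summable $p$-dependence — and in particular checking that the off-diagonal terms $m\neq m'$ and, in case \textbf{(H2)}, the embedding \eqref{F12} do not spoil the factorial/geometric gain. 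Because the increment reduces to a pure domain restriction, this is handled by the single-variable peeling already carried out in \eqref{F5}, so no estimate beyond Lemma \ref{LEM:H12} is needed.
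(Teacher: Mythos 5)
Your overall scheme (chaos expansion, hypercontractivity via \eqref{HYPER}, per-chaos $L^2$ bounds summable in $p$) is indeed the paper's strategy, but the argument breaks at the step you call the key structural observation. It is \emph{not} true that $\F\big(f_{t,x,p}-f_{s,x,p}\big)$ is supported on $\{\pmb{s_p}:\max_j s_j>s\}$. Formula \eqref{expform} is written in reversed time coordinates (note that the paper immediately performs the change of variables $t_j=t-s_j$ when computing $V_{12,R}$); in the original parametrization, for ordered times $t>s_1>\cdots>s_p>0$ one has
\begin{equation*}
p!\,\F f_{t,0,p}(\pmb{s_p},\pmb{\xi_p})
=e^{-\frac12(t-s_1)\tau(\pmb{\xi_p})^2}\prod_{i=1}^{p-1}e^{-\frac12(s_i-s_{i+1})(\xi_{i+1}+\cdots+\xi_p)^2}
=\E\Big[e^{-\i\sum_{j}(B_t-B_{s_j})\xi_j}\Big],
\end{equation*}
which depends on $t$ through the propagator $e^{-\frac12(t-s_1)\tau(\pmb{\xi_p})^2}$ from the largest time point to the terminal time. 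Consequently, on $[0,s]^p$ the increment kernel does not vanish: it equals
\begin{equation*}
\frac{e^{-\i x\tau(\pmb{\xi_p})}}{p!}\,\E\Big[e^{-\i\sum_{j}(B_s-B_{s_j})\xi_j}\Big]\Big(e^{-\frac12(t-s)\tau(\pmb{\xi_p})^2}-1\Big).
\end{equation*}
Your proof therefore only controls the part of the increment in which some time variable exceeds $s$ (the paper's term $J_{2,p,R}$) and omits entirely the component supported on $[0,s]^p$ (the paper's term $J_{1,p,R}$), which is not small and is not a domain restriction.

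Closing that gap requires a different mechanism for extracting $|t-s|$, and it is the heart of the paper's proof: one bounds $1-e^{-\frac12(t-s)\tau(\pmb{\xi_p})^2}\le\frac{t-s}{2}\tau(\pmb{\xi_p})^2$ (this is \eqref{F9}) and then cancels the dangerous factor $\tau(\pmb{\xi_p})^2$ against the weight, since $\ell_R(\tau)\,\tau^2=\frac{\sin^2(R\tau)}{\pi R}\le\frac{1}{\pi T}$ for $R\ge T$; the remaining integral is exactly $K_{1,p}(1,s)$, respectively $[K_{2,p}(1,s)]^{2H_0}$ after the embedding \eqref{F12}, so \eqref{F1} and \eqref{F8} give the summable $p$-dependence. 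This trade of a time increment for two powers of the total frequency, absorbed by $\ell_R$, has no counterpart in your proposal and cannot be produced by peeling off a time variable constrained to $(s,t)$. The remainder of your plan (the piece with a new time variable in $(s,t)$, the choice of $N$ so that $8\kappa_0\Gamma_TC_N(k-1)<1$ in case {\bf (H1)}, the $\Gamma$-function decay in case {\bf (H2)}) is consistent with how the paper treats $J_{2,p,R}$, although the paper obtains that piece more directly by recognizing its kernel as a time translate of $f_{t-s,x,p}$ and reusing \eqref{F15} and \eqref{view331} with $t$ replaced by $t-s$.
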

\begin{proof} 
From the definition of mild solution (see Definition \ref{sko-sol}), we can write
\[
u(t,x) = 1 + \int_{\R_+\times\R} G(t-s_1, x- y_1){\bf 1}_{[0,t)}(s_1) u(s_1, y_1) W(ds_1, dy_1).
\]
Therefore,    for $s<t$  we have the decomposition of $u(t,x) - u(s,x)$ into
\begin{align*}
   \int_{\R_+\times\R}  d_1(s,t,x;s_1, y_1)  u(s_1, y_1) W(ds_1, dy_1)  +  \int_{\R_+\times\R}  d_2(s,t,x;s_1, y_1)  u(s_1, y_1) W(ds_1, dy_1), 
\end{align*}
with
$d_1(s,t,x;s_1, y_1)  =  {\bf 1}_{[0,s)}(s_1) \big[ G(t-s_1, x- y_1)  -  G(s-s_1, x- y_1) \big] 
$ and
\begin{equation*}
 d_2(s,t,x;s_1, y_1) =  {\bf 1}_{[s,t)}(s_1) G(t-s_1, x- y_1). %\label{DD2}
\end{equation*}
 Now we   express $A_t(R)-A_s(R)$ as a sum of two chaos expansions that correspond to $d_1$ and $d_2$:
\[
A_t(R)-A_s(R)  =  \sum_{p= 1}^\infty J_{1,p,R}   +   \sum_{q= 1}^\infty J_{2,q,R},
\]
 where $J_{i,p,R} =\int_{B_R}  I_p^W\big( \mathfrak{g}_{i,p,x} \big) dx  $ for $i\in\{1,2\}$ and 
\begin{align*}
\mathfrak{g}_{1,p,x}(\pmb{s_p},\pmb{y_p}) &= \frac{1 }{p!} \sum_{\sigma\in\mathfrak{S}_p} {\bf 1}_{\Delta_p(s)}(\pmb{s^\sigma_p})  d_1(s,t,x;s_{\sigma(1)}, y_{\sigma(1)})  \prod_{j=1}^{p-1}  G(s_{\sigma(j)} -s_{\sigma(j+1)}, y_{\sigma(j)}- y_{\sigma(j+1)}),\\
\mathfrak{g}_{2,p,x}(\pmb{s_p},\pmb{y_p})  &= \frac{1}{p!} \sum_{\sigma\in\mathfrak{S}_p} {\bf 1}_{\Delta_p(s,t)}(\pmb{s^\sigma_p})  G(t-s_{\sigma(1)}, x-y_{\sigma(1)}) \prod_{j=1}^{p-1}  G(s_{\sigma(j)} -s_{\sigma(j+1)}, y_{\sigma(j)}- y_{\sigma(j+1)}),
\end{align*}
 with $\Delta_p(s,t) = \{ t > s_1 > \dots > s_p > s\}$.  
 Finally, we  apply \eqref{HYPER} to get
\begin{align*}
\frac{1}{\sqrt{R}}\big\| A_t(R)-A_s(R)   \big\|_{k} &\leq  \frac{1}{\sqrt{R}}\sum_{p= 1}^\infty \Big(  \big\| J_{1,p,R}  \big\|_{k} +  \big\| J_{2,p,R}  \big\|_{k} \Big)\\
& \leq  \frac{1}{\sqrt{R}}\sum_{p= 1}^\infty  (k-1)^{p/2} \Big(  \big\| J_{1,p,R}  \big\|_{2} +  \big\| J_{2,p,R}  \big\|_{2} \Big).
\end{align*}
Now, let us estimate $ \big\| J_{i,p,R}  \big\|_{2}$ for $i\in\{1,2\}$ and $p\in\{1,2,\dots\}$.

\medskip
\noindent
{\bf Case  $i=2$,  $p\ge 2$}:  Following the arguments in  \cite[Section 3.3]{NZ19BM}, we write
\begin{align*}
 \big\| J_{2,p,R}  \big\|_2^2 &=p! \int_{-R}^R\int_{-R}^R  dxdy \big\langle \mathfrak{g}_{2,p,x},  \mathfrak{g}_{2,p,y} \big\rangle_{\H^{\otimes p}} = \frac{1}{p!} \int_{-R}^R\int_{-R}^R dxdy \E\big[\big(\mathcal{I}^{1,2}_{t-s,t-s}(x-y)\big)^p \big] \\
 &\leq \frac{2 R}{p!} \int_\R dz \E\big[  \big|\mathcal{I}^{1,2}_{t-s,t-s}(z)\big|^p \big].
 \end{align*} 
From \eqref{F15} and \eqref{view331},  in view of the factor $t$ in the right-hand side of \eqref{F15}  and the factor
$t^{ 4H_0 \mathfrak{h} +\frac 12}$ in the right-hand side of \eqref{view331}, with  $4H_0 \mathfrak{h} +\frac 12  >1$,  it follows easily that
\[
 \sum_{p= 2}^\infty \left(\frac{1}{p!} \int_\R dz   \E\big[ |  \mathcal{I}^{1,2}_{t-s,t-s}(z)|^p \big] \right)^{1/2} (k-1)^{p/2} \leq C (t-s)^{1/2}.
\]
Hence, ${\displaystyle
\frac{1}{\sqrt{R}} \sum_{p=2}^\infty   \big\| J_{2,p,R}  \big\|_{k}  \leq C (t-s)^{1/2}.
 }$

\medskip
\noindent
{\bf Case  $i=2$,  $p=1$}: We have $\mathfrak{g}_{2,1,x}(s_1, y_1) =\1_{[s,t]}(s_1)  G(t-s_1, x-y_1)$, so it follows from \eqref{def:LR} that
\begin{align*}
\frac{1}{4\pi R} \big\| J_{2,1,R} \big\|_2^2 & =\frac{1}{4\pi R}  \int_s^t\int_s^t drdv \gamma_0(r-v) \int_\R d\xi \int_{-R}^R\int_{-R}^R dxdy e^{-\i (x-y)\xi} \varphi(\xi) e^{-\frac{t-r+t-v}{2}\xi^2} \\
&= \int_s^t\int_s^t drdv \gamma_0(r-v) \int_\R d\xi \ell_R(\xi) \varphi(\xi) e^{-\frac{t-r+t-v}{2}\xi^2}.
\end{align*}
Then, from \eqref{uni-bound} we obtain
\[
\frac{1}{4\pi R} \big\| J_{2,1,R} \big\|_2^2 \leq (t-s) \Gamma_T   \int_\R d\xi \ell_R(\xi) \varphi(\xi)\leq C(t-s).
\]
 As a consequence, 
\[
\frac{1}{\sqrt{R}} \big\| J_{2,1,R} \big\|_2 \leq C \sqrt{t-s} ~\text{so that}~ \frac{1}{\sqrt{R}} \big\| J_{2,1,R} \big\|_k \leq C \sqrt{t-s}.
\]
Therefore, we have proved 
\begin{equation}\label{eqn-J2}
\frac{1}{\sqrt{R}} \sum_{p=1}^\infty   \big\| J_{2,p,R}  \big\|_{k}  \leq C (t-s)^{1/2}.
 \end{equation}

Next, we shall prove
\begin{equation}\label{eqn-J1}
\frac{1}{\sqrt{R}} \sum_{p=1}^\infty   \big\| J_{1,p,R}  \big\|_{k}  \leq C (t-s)^{1/2}.
 \end{equation}
 
 \medskip
\noindent
{\bf Case  $i=1$,  $p=1$}:
We have $\mathfrak{g}_{1,1,x}(s_1, y_1) =\1_{\{ s_1 < s\} }  \big[ G(t-s_1, x-y_1) - G(s-s_1, x-y_1)\big]$ and then,
\begin{align*}
\frac{1}{4\pi R} \big\| J_{1,1,R} \big\|_2^2   & = \int_0^s\int_0^s drdv\gamma_0(r-v) \int_\R d\xi \varphi(\xi) \ell_R(\xi) \left(e^{-\frac{t-r}{2} \xi^2 }  -e^{-\frac{s-r}{2} \xi^2 } \right)\left(e^{-\frac{t-v}{2} \xi^2 }  -e^{-\frac{s-v}{2} \xi^2 } \right) \\
&\leq  \int_0^s\int_0^s drdv\gamma_0(r-v) \int_\R d\xi \varphi(\xi) \ell_R(\xi)  e^{-\frac{s-r}{2} \xi^2 }  \left(1 - e^{-\frac{t-s}{2} \xi^2 }  \right)  \\
&\leq \frac{t-s}{2}  \int_0^s\int_0^s drdv\gamma_0(r-v) \int_\R d\xi \varphi(\xi) \ell_R(\xi)  e^{-\frac{s-r}{2} \xi^2 } \xi^2 \quad\text{as $1-e^{-x}\leq x, \forall x\geq 0$} \\
&\leq \frac{t-s}{2} \Gamma_s \int_\R d\xi \varphi(\xi) \ell_R(\xi)\xi^2 \int_0^s dr  e^{-\frac{s-r}{2} \xi^2 }\\
& =(t-s) \Gamma_s \int_\R d\xi \varphi(\xi) \ell_R(\xi)\xi^2 \frac{1 - e^{-\frac{s\xi^2}{2}  }}{\xi^2} \\
&\leq (t-s) \Gamma_T \int_\R d\xi \varphi(\xi) \ell_R(\xi)  \leq C (t-s),
\end{align*}
for $R\geq T$, in view of \eqref{uni-bound}. This implies 
\[
\frac{1}{\sqrt{R}} \big\| J_{1,1,R} \big\|_2 \leq C \sqrt{t-s} ~\text{so that}~ \frac{1}{\sqrt{R}} \big\| J_{1,1,R} \big\|_k \leq C \sqrt{t-s}.
\]

 \medskip
\noindent
{\bf Case  $i=1$,  $p\ge 2$}: Let us first compute the Fourier transform of $\mathfrak{g}_{1,p,x}$
\begin{align*}
\big(\F \mathfrak{g}_{1,p,x}\big)(\pmb{s_p}, \pmb{\xi_p}  ) &= \frac{1}{p!} \sum_{\sigma\in\mathfrak{S}_p} \1_{ \Delta_p(s)} (\pmb{s^\sigma_p}) \int_{\R^p} d\pmb{y_p} e^{-\i \pmb{\xi_p} \cdot \pmb{y_p}} \\
& \qquad \times \big[ G(t-s_{\sigma(1)}, x-y_{\sigma(1)}) - G(s-s_{\sigma(1)}, x-y_{\sigma(1)})\big] \\
& \qquad     \times \prod_{j=1}^{p-1}  G(s_{\sigma(j)} -s_{\sigma(j+1)}, y_{\sigma(j)}- y_{\sigma(j+1)}) \\
& =e^{-\i \tau(\pmb{\xi_p} ) x}  \frac{1}{p!}  \1_{[0,s]}(\pmb{s_p}) \left(  \E\left[  e^{-\i \sum_{j=1}^p (B_t - B_{s_j} ) \xi_j  } \right] -\E\left[  e^{-\i \sum_{j=1}^p (B_s - B_{s_j} ) \xi_j  } \right]  \right).
\end{align*}
Therefore, 
\begin{align}
&\quad \frac{1}{4\pi R} \big\| J_{1,p,R} \big\|_2^2 \notag \\
&= \frac{p!}{4\pi R} \int_{-R}^R\int_{-R}^R  dxdy \big\langle  \mathfrak{g}_{1,p,x},  \mathfrak{g}_{1,p,y} \big\rangle_{\H^{\otimes p}}   \notag  \\
&= \frac{p!}{4\pi R} \int_{-R}^R\int_{-R}^R  dxdy  \int_{[0,s]^{2p}} d\pmb{s_p}  d\pmb{r_p} \prod_{i=1}^p \gamma_0(s_i-r_i)   \int_{\R^p} \mu(d\pmb{\xi_p}) \big(\F \mathfrak{g}_{1,p,x}\big)(\pmb{s_p}, \pmb{\xi_p}  )\big(\F \mathfrak{g}_{1,p,y}\big)(\pmb{r_p}, -\pmb{\xi_p}  )   \notag   \\
&=  \frac{1}{p!}   \int_{[0,s]^{2p}}    d\pmb{s_p}  d\pmb{r_p}  \left(  \prod_{i=1}^p \gamma_0(s_i-r_i)  \right) \int_{\R^p} \mu(d\pmb{\xi_p}) \ell_R\big(     \tau(\pmb{\xi_p} ) \big)  \Bigg(  \E\left[  e^{-\i \sum_{j=1}^p (B_t - B_{s_j} ) \xi_j  } \right]  \notag \\
 &\qquad\quad -\E\left[  e^{-\i \sum_{j=1}^p (B_s - B_{s_j} ) \xi_j  } \right]  \Bigg)  \Bigg(  \E\left[  e^{-\i \sum_{j=1}^p (B_t - B_{r_j} ) \xi_j  } \right] -\E\left[  e^{-\i \sum_{j=1}^p (B_s - B_{r_j} ) \xi_j  } \right]  \Bigg). \label{eq:continue}
\end{align} 
Let us consider separately case $(\mathbf{H1})$ and  case $(\mathbf{H2})$.

\medskip

In case $(\mathbf{H1})$, using the local integrability of $\gamma_0$ and the fact
\[
\E\left[  e^{-\i \sum_{j=1}^p (B_s - B_{r_j} ) \xi_j  } \right]  -  \E\left[  e^{-\i \sum_{j=1}^p (B_t - B_{r_j} ) \xi_j  } \right]  \in [0,1],
\]
we can write 
\begin{align*}
\frac{1}{4\pi R} \big\| J_{1,p,R} \big\|_2^2  & \leq \frac{\Gamma_s^p}{ p! } \int_{[0,s]^p} d\pmb{s_p}\int_{\R^p} \mu(d\pmb{\xi_p}) \ell_R\big(     \tau(\pmb{\xi_p} ) \big) \\
& \qquad \times   \left(  \E\left[  e^{-\i \sum_{j=1}^p (B_s - B_{s_j} ) \xi_j  } \right]  -\E\left[  e^{-\i \sum_{j=1}^p (B_t - B_{s_j} ) \xi_j  } \right]  \right). \\
\end{align*}
We have
 \begin{align}   \notag
0&  \leq  \E\left[  e^{-\i \sum_{j=1}^p (B_s - B_{s_j} ) \xi_j  } \right]  -\E\left[  e^{-\i \sum_{j=1}^p (B_t - B_{s_j} ) \xi_j  } \right]  \\ \notag
& =
\E \left[  e^{-\i \sum_{j=1}^p (B_s - B_{s_j} ) \xi_j  }   \left( 1- e^{-\i  (B_t-B_s) \tau(\pmb{ \xi_p})} \right) \right]   \\
&=  \E\left[  e^{-\i \sum_{j=1}^p (B_s - B_{s_j} ) \xi_j  } \right] \left(1- e^{ -\frac 12 (t-s) \tau(\pmb{\xi_p}) ^2} \right) \notag \\ 
& \leq  \frac{t-s}{2}  \tau(\pmb{\xi_p} ) ^2  \E\left[  e^{-\i \sum_{j=1}^p (B_s - B_{s_j} ) \xi_j  } \right].  \label{F9}
  \end{align}
As a consequence, for all $R>T$, we can write
 \begin{align*}
 \frac{1}{4\pi R} \big\| J_{1,p,R} \big\|_2^2  &\leq \frac{ (t-s)\Gamma_s^p}{2p!} \int_{[0,s]^p} d\pmb{s_p}\int_{\R^p} \mu(d\pmb{\xi_p}) \ell_R\big(     \tau(\pmb{\xi_p} ) \big)   \tau(\pmb{\xi_p} ) ^2   \E\left[  e^{-\i \sum_{j=1}^p (B_s - B_{s_j} ) \xi_j  } \right]    \\
 & \leq  \frac{(t-s)\Gamma_T^p }{2 p!} \int_{[0,s]^p} d\pmb{s_p}\int_{\R^p} \mu(d\pmb{\xi_p}) \frac{ \sin^2(R   \tau(\pmb{\xi_p} )  ) }{\pi R}  \E\left[  e^{-\i \sum_{j=1}^p (B_s - B_{s_j} ) \xi_j  } \right] \\
 &\leq  \frac{ (t-s)\Gamma_T^p}{2\pi T  p!} \int_{[0,s]^p} d\pmb{s_p}\int_{\R^p} \mu(d\pmb{\xi_p})    \E\left[  e^{-\i \sum_{j=1}^p B_{s_j}  \xi_j  } \right]\\
 &=  \frac{ (t-s)\Gamma_T^p}{2\pi T  p!}  K_{1,p} (1,s).
 \end{align*}
  Then, applying  inequality  \eqref{F1}, yields, for $R>T$,
   \begin{align*}
 \frac{1}{\sqrt{R}} \sum_{p= 2}^\infty \big\| J_{1,p,R} \big\|_k &\leq  \frac{1}{\sqrt{R}} \sum_{p= 2}^\infty  \big\| J_{1,p,R} \big\|_2 (k-1)^{p/2} \\
 & \le \sqrt{\frac 1{T \kappa_0}}  (t-s)^{1/2} \sum_{p= 2}^\infty \sqrt{     (8\kappa_0C_N (k-1)\Gamma_T)^p    \exp\Big( \frac{s D_N}{2C_N} \Big)}
  \leq C(t-s)^{1/2},
 \end{align*}
 for some large $N$ such that $8\kappa_0C_N' (k-1)\Gamma_T<1$.  This, together with the previous estimate for $p=1$, leads to  
 \eqref{eqn-J1} under the hypothesis ($\mathbf{H1}$).
 
 \medskip
 
Consider now the  case $(\mathbf{H2})$.   Let us continue with \eqref{eq:continue} and apply the embedding result \eqref{F12}  and Cauchy-Schwarz inequality to write
 \begin{align*}
 \frac{1}{4\pi R} \big\| J_{1,p,R} \big\|_2^2 
&   \leq \frac{ C_{H_0}^p}{p!} \Bigg[ \int_{[0,s]^p} d\pmb{s_p}   \Big[  \int_{\R^p} \mu(d\pmb{\xi_p}) \ell_R\big(     \tau(\pmb{\xi_p} ) \big)   \\
& \qquad   \times  \left\{  \E\left[  e^{-\i \sum_{j=1}^p (B_s- B_{r_j} ) \xi_j  } \right] -\E\left[  e^{-\i \sum_{j=1}^p (B_t - B_{r_j} ) \xi_j  } \right]  \right\}   \Big]^{\frac{1}{2H_0}}  \Bigg]^{2H_0}.
\end{align*} 
We deduce from \eqref{F9} and $\ell_R(x) = \frac{\sin^2(Rx)}{\pi Rx^2}$ that
\begin{align*}
   \frac{1}{4\pi R} \big\| J_{1,p,R} \big\|_2^2 
 & \leq  \frac{C_{H_0}^p}{p!}    \frac {t-s} { 2\pi T}  \left[  \int_{[0,s]^p} d\pmb{s_p} \left( \int_{\R^p} \mu(d\pmb{\xi_p})  \E\left[  e^{-\i \sum_{j=1}^p (B_s- B_{s_j} ) \xi_j  } \right]  \right)^{\frac{1}{2H_0}} \right]^{2H_0}\\
 &= \frac{C_{H_0}^p}{p!}    \frac {t-s} { 2\pi T}   [K_{2,p} (1,s) ]^{2H_0}.
\end{align*}
 Then, applying the estimate \eqref{F8} yields
 \begin{align*}
 \frac{1}{R} \big\| J_{1,p,R} \big\|_2^2 &  \leq \frac{2(t-s)}{T} (p!)^{2H_0-1}  \big(C_{H_0}C_2^{2H_0} \big)^p \left(  \frac{T^{ p\mathfrak{h}} }{\Gamma(p \mathfrak{h}+1)}  \right)^{2H_0}, 
 \end{align*}
 for $s<t\leq T \leq R$. Hence,  for $s<t\leq T \leq R$,
  \begin{align}  \notag
 \frac{1}{\sqrt{R}} \sum_{p=2}^\infty \big\| J_{1,p,R} \big\|_k &\leq  \frac{1}{\sqrt{R}} \sum_{p= 2}^\infty \big\| J_{1,p,R} \big\|_2 (k-1)^{p/2} \\ \notag
&\leq \sqrt{\frac 2T}  (t-s)^{1/2} \sum_{p=2}^\infty \big(C_{H_0}C_2^{2H_0}\big)^{p/2} (p!)^{\frac{2H_0-1}{2}}  \left(  \frac{T^{  p \mathfrak{h} }}{\Gamma(p \mathfrak{h}+1)}     \right)^{H_0} \\  \label{F19}
& \leq C (t-s)^{1/2},
 \end{align}
 where the last inequality follows from  the fact that $\frac {2H_0-1}2 - \mathfrak{h} H_0 = -\frac {H_1} 2$.  From inequality \eqref{F19}  together with the one for $p=1$,  we can obtain \eqref{eqn-J1} in case {\bf (H2)}.

 \medskip
 
 Therefore, in both cases {\bf (H1)} and {\bf (H2)}, the estimates \eqref{eqn-J2} and \eqref{eqn-J1} yield  \eqref{G00}, and hence   the proof of tightness is concluded. 
 \end{proof}

\section{Proof of Proposition \ref{prop1}}\label{FKproof}

 For $a>0$, we define 
 $
 \varphi_a(t) = \frac{1}{a}  \1_{[0,a]}(t)$,  $t\in\R$,
 so $\big\{\varphi_a(\ast) G(\e,\cdot): \e,a>0 \big\}$  is an approximation to the delta function.   Consider the approximating equation 
 \begin{align}\label{appeq}
 \frac{ \partial u^{\e, a}} {\partial t}(t,x) = \frac{1}{2}\Delta u^{\e, a}(t,x)  +  u^{\e, a}(t,x) \diamond  W^{\e,a}_{t,x},
 \end{align}
 with initial condition $u^{\e, a}(t,x) =1$ and 
 \[
 W^{\e,a}_{t,x} = \int_0^t \int_\R \varphi_a(t-s) G(\e,x-y) W(ds,dy). 
 \]
   By following exactly the same lines as in the proof of \cite[Proposition 5.2]{HN09}, we can  
  show that 
  \begin{equation} \label{ECU9}
  u^{\e, a}(t,x) = \E_B\Big[  \exp\Big(  W(A^{\e, a, B}_{t,x}) - \dfrac{1}{2} \| A^{\e, a,B}_{t,x}\|^2_{\mathfrak{H}}   \Big) \Big]
  \end{equation}
   solves  equation \eqref{appeq}, where $ \E_B$ denotes the expectation with respect to the randomness $B$ and 
 \begin{align*}
 A^{\e, a, B}_{t,x}(r,y)  &= \int_0^t \varphi_a(t-s-r) G(\e, B_{s} +x-y)ds\, \1_{[0,t]}(r)  \\
 &= \frac{1}{a} \int_0^{a\wedge (t-r)}  G(\e, B_{t-r-s} +x-y)ds\, \1_{[0,t]}(r),
 \end{align*}
 with $\{B_s, s\in\R_+\}$ a standard   Brownian motion on $\R$ independent of $W$. We omit the details of the proof of \eqref{ECU9}

  The proof of the Feynman-Kac formula begins with the following identity:    For any $t_1, \dots, t_k$ and any $x_1, \dots x_k$ with $k\geq 2$, 
  \begin{align} \label{quan1}
 \E\left(\prod_{j=1}^k u^{\e,a}(t_j, x_j) \right)= \E\left[ \exp\left( \sum_{1\leq i < j \leq k} \big\langle A^{\e, a, B^i}_{t_i,x_i}, A^{\e, a, B^j}_{t_j, x_j}   \rangle_\H \right) \right],
 \end{align}
  where $B^1, \dots, B^k$ are $k$ i.i.d copies of $B$; see \emph{e.g.} equation (3.22) in \cite[page 15]{HHNT15}.
  
  The remaining proof consists of three steps. In Step 1, we will prove that  the expectations in \eqref{quan1} are uniformly bounded over $\e>0, a\in (0, c_0)$ for some $c_0>0$; the second step is devoted to proving  that
  \[
   \big\langle A^{\e, a, B^i}_{t_i,x_i}, A^{\e, a, B^j}_{t_j, x_j}   \rangle_\H \xrightarrow[a. s.]{a\downarrow 0} \mathcal{I}^{i,j}_{t_i, t_j,\e}(x_i-x_j) \xrightarrow[L^p(\Omega)]{\e\downarrow 0} \mathcal{I}_{t_i, t_j}^{i,j}(x_i- x_j), 
  \]
  where  $ \mathcal{I}^{i,j}_{t_i, t_j,\e}(x_i- x_j)$ and $ \mathcal{I}^{i,j}_{t_i, t_j}(x_i-x_j)$ are respective limits whose expressions will be clear later; in Step 3, we will show the $L^p(\Omega)$-convergence of $u^{\e, a}(t,x)$ to $u(t,x)$ as $\e, a\downarrow 0$.   Combining these steps yields
  \[
   \E\left(\prod_{j=1}^k u(t_j, x_j) \right) = \E\left[ \exp\left( \sum_{1\leq i < j \leq k}    \mathcal{I}^{i,j}_{t_i, t_j}(x_i-x_j) \right) \right],
   \]
  which is   formula \eqref{prop:FK}.  \\

 \noindent{\bf Step 1}:  It suffices to show  that for $i<j$ and any $\lambda>0$,
    \begin{align}\label{expm}
\sup\left\{  \E\left[ \exp\left( \lambda  \Big\vert  \big\langle A^{\e, a, B^i}_{t_i,x_i}, A^{\e, a, B^j}_{t_j, x_j}   \big\rangle_\H\Big\vert  \right) \right] : a\in(0,t_i\wedge t_j], \e >0 \right\} < +\infty.
    \end{align} 
 Similar to (3.23) and (3.24) in  \cite[page 15]{HHNT15}, we have 
\begin{align*}
  \big\langle A^{\e, a, B^i}_{t_i,x_i}, A^{\e, a, B^j}_{t_j, x_j}   \big\rangle_\H & =\int_\R  \mu(d \xi) e^{-\e \xi^2} e^{-\i \xi(x_i-x_j)} \int_0^{t_i}  \int_0^{t_j} drd\wt{r} \gamma_0(t_i-r-t_j+\wt{r} ) \\
  &\qquad \times \frac{1}{a^2} \int_0^{a\wedge r} \int_0^{a\wedge \wt{r}} ds d\tilde{s}   e^{-\i \xi (B^i_{r-s} - B_{\wt{r}-\tilde{s}}^j)}.
  \end{align*}
Thus, with $m=2n\geq 2$, we can write
\begin{align}
\E\Big[\big\langle A^{\e, a, B^i}_{t_i,x_i}, A^{\e, a, B^j}_{t_j, x_j}   \big\rangle_\H^m \Big]  &= \int_{\R^m} \mu(d\pmb{\xi_m}) \left(\prod_{k=1}^m e^{-\e\xi^2_k} e^{-\i \xi_k(x_i-x_j)}  \right) \int_{[0,t_i]^m\times [0, t_j]^m} d\pmb{r_m} d\pmb{\wt{r}_m} \notag \\
& \quad \times \left( \prod_{k=1}^m \gamma_0(t_i-r_k-t_j+\wt{r}_k)\right) \frac{1}{a^{2m}}\int_{[0,a]^{2m}} d\pmb{s_m} d\pmb{\wt{s}_m} \1_{\{ s_k \leq r_k , \wt{s}_k\leq \wt{r}_k ; \forall k\}}\notag \\
&\quad \times \E\left[  \exp\left(-\i \sum_{k=1}^m\xi_k (B^i_{ r_k -s_k} - B_{ \wt{r_k}-\wt{s}_k}^j) \right) \right]  \notag\\
&\leq    \frac{1}{a^{2m}}\int_{[0,a]^{2m}} d\pmb{s_m} d\pmb{\wt{s}_m}\int_{\R^m} \mu(d\pmb{\xi_m})  \int_{[0,t_i]^m\times [0, t_j]^m} d\pmb{r_m} d\pmb{\wt{r}_m} \1_{\{ s_k \leq r_k , \wt{s}_k\leq \wt{r}_k ; \forall k\}}  \notag \\
& \quad    \times \left( \prod_{k=1}^m \gamma_0(t_i-r_k-t_j+\wt{r}_k)\right)  \E\left[  \exp\left(-\i \sum_{k=1}^m\xi_k (B^i_{ r_k -s_k} - B_{ \wt{r_k}-\wt{s}_k}^j) \right) \right] \label{begin}
\end{align}
and for $s_k \leq r_k , \wt{s}_k\leq \wt{r}_k$,
\begin{align*}
& \E\left[  \exp\left(-\i \sum_{k=1}^m\xi_k (B^i_{ r_k -s_k} - B_{\wt{r_k}-\wt{s}_k}^j) \right) \right] \\
 = &\ \exp\left( -\frac{1}{2} {\rm Var} \sum_{k=1}^m \xi_k (B^i_{ r_k -s_k} - B_{ \wt{r_k}-\wt{s}_k}^j) \right) \leq  \exp\left( -\frac{1}{2} {\rm Var} \sum_{k=1}^m \xi_k  B^i_{ r_k -s_k}\right) .
 \end{align*}

 Now let us consider  case {\bf (H1)} where $\gamma_0$ is locally integrable and the spectral density $\varphi$ satisfies the modified Dalang's condition \eqref{mDc}. 
 By changing $(r_k-s_k, \wt{r}_k - \wt{s}_k)$ to $(r_k,\wt{r}_k)$ for each $k$, we get the following estimate   $(a\leq t_i\wedge t_j)$
\begin{align*}
\E\Big[\big\langle A^{\e, a, B^i}_{t_i,x_i}, A^{\e, a, B^j}_{t_j, x_j}   \big\rangle_\H^m \Big] & \leq   \frac{1}{a^{2m}}\int_{[0,a]^{2m}} d\pmb{s_m} d\pmb{\wt{s}_m}\int_{\R^m} \mu(d\pmb{\xi_m})  \int_{[0,t_i]^m\times [0, t_j]^m} d\pmb{r_m} d\pmb{\wt{r}_m}    \\
& \qquad\qquad   \times \left( \prod_{k=1}^m \gamma_0(t_i-r_k - s_k-t_j+\wt{r}_k + \wt{s}_k)\right)  \exp\left( -\frac{1}{2} {\rm Var} \sum_{k=1}^m \xi_k B^i_{ r_k }   \right) \\
 &\le \Gamma^m_{t_i+t_j} \int_{\R^m} \mu(d\pmb{\xi_m})  \int_{[0,t_i]^m} d\pmb{r_m}       \exp\left( -\frac{1}{2} {\rm Var} \sum_{k=1}^m \xi_k B_{ r_k }  \right) \\
 &= \Gamma^m_{t_i+t_j}  K_{1,m} ( 1,t_i),
\end{align*}
with $\Gamma_t := \int_{-t}^t \gamma_0(s)ds$   and
  $K_{1,m} ( 1,t_i)$  defined as  in \eqref{K1}.  The estimate \eqref{F1} yields
\[
 \E\Big[\big\langle A^{\e, a, B^i}_{t_i,x_i}, A^{\e, a, B^j}_{t_j, x_j}   \big\rangle_\H^m \Big] 
  \leq  \frac 1 {2\kappa_0} m!(  8\kappa_0C_N\Gamma_{t_i+t_j})^{m} e^{\frac{t_i D_N}{2C_N} }.
\]
  In particular, for any $n\in\mathbb{N}$, we have 
\begin{align*}
&\quad  \E\Big[\big\vert \langle A^{\e, a, B^i}_{t_i,x_i}, A^{\e, a, B^j}_{t_j, x_j}   \rangle_\H\big\vert^n \Big] \leq 
(2\kappa_0)^{-1/2} \sqrt{  (2n)!  e^{\frac{t_i D_N}{2C_N }}}   (  8\kappa_0C_N\Gamma_{t_i+t_j})^{n}.
\end{align*}
Note that for any given $\lambda\in(0,\infty)$ we can choose    $N>0$ such that $0< 16\kappa_0C_N\Gamma_{t_i+t_j}\lambda < 1$.
Thus, we deduce that
\begin{align*}
 \E\left[ \exp\left( \lambda  \Big\vert  \big\langle A^{\e, a, B^i}_{t_i,x_i}, A^{\e, a, B^j}_{t_j, x_j}   \big\rangle_\H\Big\vert  \right) \right] &\leq 1+ \sum_{n=1}^\infty \frac{\lambda^n}{n!}   \E\Big[\big\vert \langle A^{\e, a, B^i}_{t_i,x_i}, A^{\e, a, B^j}_{t_j, x_j}   \rangle_\H\big\vert^n \Big] \\
 &\leq 1+ (2\kappa_0)^{-1/2}    \sqrt{  e^{\frac{t_i D_N}{2C_N } }} \sum_{n= 1}^\infty \frac{ \sqrt{(2n)! }}{n!} (  8\lambda\kappa_0C_N'\Gamma_{t_i+t_j})^{n} \\
 &\leq 1+(2\kappa_0)^{-1/2}    \sqrt{  e^{\frac{t_i D_N}{2C_N } }} \sum_{n=1}^\infty  (  16\lambda\kappa_0C_N'\Gamma_{t_i+t_j})^{n}, 
 \end{align*}
where we used  $(2n)! \leq 4^n (n!)^2$. Hence, we have proved \eqref{expm} in case {\bf (H1)}.
 
 \bigskip
 Now let us consider case {\bf (H2)} where $\gamma_0(t) = | t|^{2H_0-2}$ for some $H_0\in(1/2,1)$ and  $\varphi(x)= |x|^{1-2H_1}$ for some $H_1\in (0, 1/2)$ such that $H_0 + H_1 > 3/4$. 
We begin with \eqref{begin} and make the change of variables   $t_i-r_k \to r_k$ and $t_j - \wt{r}_k \to \wt{r}_k$, to write, with   $m=2n\geq 2$,  
\begin{align*}
 \E\Big[\big\langle A^{\e, a, B^i}_{t_i,x_i}, A^{\e, a, B^j}_{t_j, x_j}   \big\rangle_\H^m \Big]    
& \leq    \frac{1}{a^{2m}}\int_{[0,a]^{2m}} d\pmb{s_m} d\pmb{\wt{s}_m}\int_{\R^m} \mu(d\pmb{\xi_m})  \int_{[0,t_i]^m\times [0, t_j]^m} d\pmb{r_m} d\pmb{\wt{r}_m}   \\
&  \quad \times
\1_{\{ s_k \leq t_i-r_k , \wt{s}_k\leq t_j-\wt{r}_k ; \forall k\}}         \left( \prod_{k=1}^m | r_k-\wt{r}_k| ^{2H_0-2}\right) \\
& \quad \times   \exp\left(- \Var  \sum_{k=1}^m\xi_k B_{t_i- r_k -s_k} \right)  \exp \left(- \Var \sum_{k=1}^mB_{t_j- \wt{r_k}-\wt{s}_k}) \right). 
\end{align*}
Then, the embedding property \eqref{F15}, together with Cauchy-Schwarz inequality and the change of variables
$  v_k = t_i - r_k - s_k$, $  \wt{v}_k = t_j - \wt{r}_k - \wt{s}_k$, leads to
\begin{align*}
\E\Big[\big\langle A^{\e, a, B^i}_{t_i,x_i}, A^{\e, a, B^j}_{t_j, x_j}   \big\rangle_\H^m \Big]    
& \leq   C_{H_0}^m
 \left(\int_{[0,t_i]^m} d\pmb{v_m}  \left(   \int_{\R^m} \mu(d\pmb{\xi_m})  \exp\left( -  {\rm Var}  \sum_{k=1}^m\xi_k  B^i_{v_k}   \right)   \right)^{\frac{1}{2H_0}} \right)^{H_0} \\
& \quad \times   \left(\int_{[0,t_j]^m} d\pmb{v_m}  \left(   \int_{\R^m} \mu(d\pmb{\xi_m})  \exp\left( -  {\rm Var}  \sum_{k=1}^m\xi_k  B^i_{v_k}   \right)   \right)^{\frac{1}{2H_0}} \right)^{H_0}\\
&= C_{H_0}^m\left[ K_{2,m} (1,t_i) K_{2,m} (1,t_j) \right]^{H_0}.
\end{align*}
Finally,  using the estimate \eqref{F8}, we can write
\[
\E\Big[\big\langle A^{\e, a, B^i}_{t_i,x_i}, A^{\e, a, B^j}_{t_j, x_j}   \big\rangle_\H^m \Big]   
\le  C_{H_0}^m  C_2^{2H_0m}  \big(t_it_j \big)^{m\mathfrak{h}}     \frac { (m!)^{2H_0}  }{ (\Gamma (m \mathfrak{h} +1))^{2H_0}}
\]
 Thus,    for any $\lambda > 0$,
  \begin{align*}
  \E\Big[ e^{ \lambda  \vert \langle A^{\e, a, B^i}_{t_i,x_i}, A^{\e, a, B^j}_{t_j, x_j}   \rangle_\H \vert }\Big]  
& \le 2+ 2   \sum_{n=1}^\infty \frac{\lambda^{2n}}     {(2n)!} 
 \E\Big[ \vert \langle A^{\e, a, B^i}_{t_i,x_i}, A^{\e, a, B^j}_{t_j, x_j}   \rangle_\H \vert^{2n}\Big] \\
&\le 2+2  \sum_{n=1}^\infty  \left[ \lambda C_{H_0}   C_2^{2H_0} \big( t_it_j \big)^{\mathfrak{h}} \right]^{2n}     \frac { ((2n)!)^{2H_0-1}  }{ (\Gamma (2n \mathfrak{h} +1))^{2H_0}}  <\infty,
\end{align*}
because $2H_0-1 -2H_0 \mathfrak {h} = -H_1 <0$.
This implies   \eqref{expm} in case {\bf (H2)}.
 
  \medskip
  
  \noindent{\bf Step 2}. For  fixed $\e>0$ and $i<j$, as $a\downarrow 0$, 
\[
\frac{1}{a^2} \int_0^{a\wedge r} \int_0^{a\wedge \wt{r}} ds_1 ds_2   e^{-\i \xi (B^i_{r-s_1} - B_{\wt{r}-s_2}^j)}
\]
is uniformly bounded and converges to $e^{-\i \xi (B^i_{r} - B_{\wt{r}}^j)}$, due to the path continuity of Brownian motion.  As $\mu_\e(d \xi) = e^{-\e \xi^2}\mu(d \xi) $ is a finite measure and $\gamma_0$ is locally integrable,  we have 
\begin{align*}
\big\langle A^{\e, a, B^i}_{t_i,x_i}, A^{\e, a, B^j}_{t_j, x_j}   \big\rangle_\H   &   \xrightarrow[a\downarrow 0]{a.s.}  \int_\R  \mu(d \xi) e^{-\e \xi^2-\i \xi(x_i-x_j)}  \int_0^{t_i}   \int_0^{t_j} drd\wt{r} \gamma_0(t_i-r-t_j+\wt{r} ) e^{-\i \xi (B^i_{r} - B_{\wt{r}}^j)} \\
& =: \mathcal{I}^{i,j}_{t_i,t_j,\e}(x_i-x_j).
 \end{align*}
 The above almost sure limit $\mathcal{I}^{i,j}_{t_i,t_j,\e}(x_i- x_j)$ is real for any $x_i,x_j\in\R$ and any $\e>0$, since $\mu$ is symmetric (\emph{i.e.} the spectral density is an even function on $\R$).  In the sequel, we just write $\mathcal{I}^{i,j}_\e(x_i-x_j)$,  $\mathcal{I}^{i,j}(x_i-x_j)$ to mean $\mathcal{I}^{i,j}_{t_i, t_j,\e}(x_i- x_j)$ and $\mathcal{I}^{i,j}_{t_i, t_j}(x_i- x_j)$ respectively.

We will prove $\mathcal{I}^{i,j}_\e(x_i-x_j)$ converges, as $\e\downarrow 0$, in $L^2(\Omega)$ (hence in probability) to some limit, denoted by $\mathcal{I}^{i,j}(x_i-x_j)$. Note   that, by Fatou's lemma and  the estimates in {\bf Step 1}, we can establish  that
 for any $\lambda\in\R$,
 \begin{align*}
  \E\Big[ \exp \left(\lambda  \mathcal{I}^{i,j}_\e(x_i-x_j) \right)\Big]\leq   C_\lambda, %\label{ebdd}
 \end{align*}
 for all $\e>0$, 
where $C_\lambda$ is a constant that does not depend on $\e$. Now we   rewrite 
 \begin{align*}
 \mathcal{I}^{i,j}_\e(x_i-x_j) = \int_{\R} d\xi \varphi(\xi) e^{-\e \xi^2}\int_0^{t_i}   \int_0^{t_j} drd\wt{r} \gamma_0(r-\wt{r} )  e^{-\i \xi (   B^i_{t_i-r} - B_{t_j-\wt{r}}^j + x_i - x_j )} ,
 \end{align*}
 and  we compute for $\e_1, \e_2>0$,
\begin{align*}
\E \big[ \mathcal{I}_{\e_1}^{i,j}(x_i-x_j)\mathcal{I}_{\e_2}^{i,j}(x_i-x_j)\big] &=  \int_{\R^2}   d\pmb{\xi_2} \varphi(\xi_1)\varphi(\xi_2) e^{-\sum_{k=1}^2\e_k \xi^2_k  }   e^{-\i (x_i-x_j)(\xi_1+\xi_2) }\\
&\qquad \times\int_{[0, t_i]^2 \times [0, t_j]^2  }   d\pmb{r_2} d\pmb{\wt{r}_2}   \gamma_0(r_1-\wt{r}_1 ) \gamma_0(r_2-\wt{r}_2 )\\
& \qquad \times  \E\left[ e^{-\i \sum_{k=1}^2\xi_k (   B^i_{t_i-r_k} - B_{t_j-\wt{r}_k}^j  )} \right].
\end{align*}
  Note that $\E\big[ e^{-\i \sum_{k=1}^2\xi_k (   B^i_{t_i-r_k} - B_{t_j-\wt{r}_k}^j  )} \big] = \exp\big[ -\frac{1}{2} \text{Var}  \sum_{k=1}^2\xi_k (   B^i_{t_i-r_k} - B_{t_j-\wt{r}_k}^j  ) \big] $ is positive, and note also from previous calculations  in both cases \textbf{(H1)} and \textbf{(H2)} that
\begin{align*}
&  \int_{\R^2} d\xi_1 d\xi_2 \varphi(\xi_1)\varphi(\xi_2)  \int_{[0, t_i]^2 \times [0, t_j]^2  }   d\pmb{r_2} d\pmb{\wt{r}_2}  \gamma_0(r_1-\wt{r}_1 ) \gamma_0(r_2-\wt{r}_2 ) \\
& \qquad \times  \exp\left( -\frac{1}{2} \text{Var}  \sum_{k=1}^2\xi_k (   B^i_{t_i-r_k} - B_{t_j-\wt{r}_k}^j  ) \right)    <\infty.
\end{align*}
By the dominated convergence theorem, the limit 
$$
\lim_{\e_1,\e_2\downarrow 0}\E \big[ \mathcal{I}_{\e_1}^{i,j}(x_i-x_j)\mathcal{I}_{\e_2}^{i,j}(x_i-x_j)\big]
$$
exists.  Therefore,  as $\e\downarrow 0$, $\mathcal{I}_{\e}^{i,j}(x_i-x_j)$ converges in $L^2(\Omega)$ to some limit $\mathcal{I}^{i,j}(x_i-x_j)$, which is  \emph{formally} given by 
\[
 \mathcal{I}^{i,j}(x_i- x_j) = \int_{\R} d\xi \varphi(\xi)  \int_0^{t_i}   \int_0^{t_j} drd\wt{r} \gamma_0(r-\wt{r} )  e^{-\i \xi (   B^i_{t_i-r} - B_{t_j-\wt{r}}^j + x_i - x_j )}.
\]
 In addition, it is easy to show that this convergence also takes place in $L^p(\Omega)$ for any $p\geq 1$.

Thus, together with  \eqref{expm}, we deduce  by first passing  $a$ to zero, then $\e$ to zero that
\begin{align*}
 \E\left[ \exp\left( \sum_{1\leq i < j \leq k} \big\langle A^{\e, a, B^i}_{t_i,x_i}, A^{\e, a, B^j}_{t_j, x_j}   \rangle_\H \right) \right]  \to  \E\left[ \exp\left( \sum_{1\leq i < j \leq k} \mathcal{I}^{i,j}(x_i-x_j) \right) \right].
\end{align*}

  \noindent{\bf Step 3}. The last step is to establish the  $L^p(\Omega)$-convergence of $u^{\e, a}(t, x)$ to $u(t, x)$, as $\e, a\downarrow 0$. It is enough to show the $L^2(\Omega)$-convergence in view of the moment bounds from {\bf Step 1}.
  First for $\e_1, \e_2, a_1, a_2>0$, we can write by similar arguments as before, 
  \[
  \E\big[  u^{\e_1, a_1}(t, x)u^{\e_2, a_2}(t, x) \big] = \E\Big[ \exp\big( \langle A^{\e_1,a_1, B^1}_{t,x} , A^{\e_2,a_2, B^2}_{t,x}  \rangle_\H \big) \Big]
  \]
   and 
 \begin{align*}
  \langle A^{\e_1,a_1, B^1}_{t,x} , A^{\e_2,a_2, B^2}_{t,x}  \rangle_\H  &=  \int_{\R} \mu(d\xi) e^{-\frac{\e_1+\e_2}{2}\xi^2} \int_{[0,t]^2} dr d\wt{r} \gamma_0(r-\wt{r}) \frac{1}{a_1a_2}\\
  & \quad \times  \int_0^{a_1\wedge r}\int_0^{a_2\wedge \wt{r}}    ds_1ds_2 e^{-\i  \xi ( B^1_{r-s_1} - B^2_{\wt{r} -s_2}   ) }    \\
  &  \xrightarrow[a.s.]{a_1,a_2\downarrow 0} \int_{\R} \mu(d\xi) e^{-\frac{\e_1+\e_2}{2}\xi^2} \int_{[0,t]^2} dr d\wt{r} \gamma_0(r-\wt{r})  e^{-\i  \xi ( B^1_{r} - B^2_{\wt{r}}   ) } \\
  &  \xrightarrow[in~ L^2(\Omega)]{\e_1,\e_2\downarrow 0}  \mathcal{I}^{1,2}_{t,t}(0).  
 \end{align*}
  By \eqref{expm} again,  we have
  \[
   \E\big[  u^{\e_1, a_1}(t, x)u^{\e_2, a_2}(t, x) \big]   \xrightarrow[then~ \e_1,\e_2\downarrow 0]{a_1,a_2\downarrow 0} \E\Big[ e^{   \mathcal{I}^{1,2}_{t,t}(0)  }\Big],
   \]
   which implies that the limit  
   \[
v(t,x):=   \lim_{\e\downarrow 0}   \lim_{a\downarrow 0}  u^{\e, a}(t, x) 
   \]
  exists in $L^p(\Omega)$ for any $p\geq 1$.  
  
  Now consider a test random variable $F = \exp\big(W(g) - \frac{1}{2}\| g \|^2_\H \big)$ for $g\in C^\infty_c(\R_+\times\R)$ and recall that   random variables of this from are dense in $\D^{1,2}$. For such a $F$, we have 
  \begin{align*}
  \E\big[ F  u^{\e, a}(t, x)  \big] &= \E\Big[  \exp\Big( W( A^{\e, a,B}_{t,x}  + g ) - \frac{1}{2} \| A^{\e, a,B}_{t,x} \|^2_\H - \frac{1}{2} \| g \|^2_\H \Big)     \Big] =  \E\Big[  \exp\big( \langle  A^{\e, a,B}_{t,x}  ,  g  \rangle_\H \big)     \Big] \\
  &= \E\left[  \exp\left( \int_0^t  \langle  \varphi_a(s-\bullet)G(\e, B_{t-s}+x - \ast)   ,  g(\bullet,\ast)  \rangle_\H ds \right) \right],
  \end{align*}
 then putting $S_{t,x} =  \E\big[ F u^{\e, a}(t, x)  \big] $, we deduce from the classical Feynman-Kac formula that $S_{t,x}$ solves
 the partial differential equation
 \[
 \partial_t S_{t,x} = \frac{1}{2} \Delta S_{t,x} + S_{t,x}   \langle\varphi_a(t-\bullet)G_{\e}( x - \ast)   ,  g(\bullet,\ast)  \rangle_\H
 \]
 with initial condition $S_{0,x} = \E[F]=1$;   see for instance \cite[page 315]{HN09}. It follows that
 \begin{align}
   \E\big[ F  u^{\e, a}(t, x)  \big]  \notag  &=  1 +  \int_0^t \int_\R \E [ F  u^{\e, a}(s, y)   ] G_{t-s}( x-y)  \langle\varphi_a(s-\bullet)G_{\e}( y - \ast)   ,  g(\bullet,\ast)  \rangle_\H dsdy \notag  \\
\qquad \qquad   \xrightarrow[then~ \e\downarrow 0]{a\downarrow 0}&\ 1 + \E \big\langle DF,     v  G_{t-\bullet}( x-\ast)   \big\rangle_\H,  \label{ast}
   \end{align}
where the convergence in \eqref{ast} is verified at the end of this proof. Assuming \eqref{ast}, we have 
\[
 \E\big[ F  v(t, x)  \big]  = 1 + \E \big\langle DF,     v  G_{t-\bullet}( x-\ast)   \big\rangle_\H,
 \]
    which is equivalent to say that $v(t,x)$ solves the same equation for $u(t,x)$ so that $v(t,x) = u(t,x)$ by the uniqueness of the mild solution; see also  \cite{HN09} for similar arguments for the case where the Gaussian noise is fractional in time and white in space.

 \medskip
 
 Now, let us now justify the convergence in \eqref{ast}. First, by $L^2$-convergence of $u^{\e,a}(s,y)$, we have 
 \[
 \E [ F  u^{\e, a}(s, y)   ]   \xrightarrow[then~ \e\downarrow 0]{a\downarrow 0}  \E[ F v(s,y)].
 \]
   And recall that  $g\in C^\infty_c(\R_+\times\R)$ (suppose $g(r, y) =0$ for $(r,y)\in [0, K]^c\times [-K, K]^c$), so that 
\[
\vert \F g(r, \xi)\vert \leq C \wedge \frac{C}{\xi^2},~ \forall (r,\xi)\in\R_+\times\R.
\]
Combining  the above bound and Dalang's condition, we have
\begin{align}
 \int_{\R} d\xi \varphi(\xi)    \vert \F g\vert(\wt{r}, \xi)  \leq C \int_{\{ \vert \xi \vert \leq 1\}} \varphi(\xi) d\xi + C    \int_{\{ \vert \xi \vert > 1\}} \frac{\varphi(\xi)}{\xi^2} d\xi <+\infty. \label{fact7}
 \end{align}
 Noting also that $\gamma_0$ is locally integrable, we observe that 
 \[
  \langle\varphi_a(s-\bullet)G_{\e}( y - \ast)   ,  g \rangle_\H =\int_{\R_+^2\times\R} drd\wt{r} d\xi \gamma_0(r-\wt{r} ) \varphi_a(s-r)   \varphi(\xi)  e^{-\i y\xi -\frac{\e}{2} \xi^2} \F g(\wt{r}, -\xi) 
 \]
  is uniformly bounded over $(s,y,\e, a)\in[0,t] \times\R\times (0,\infty)^2$. Hence,
 \begin{align}
  \langle\varphi_a(s-\bullet)G_{\e}( y - \ast)   ,  g \rangle_\H &=\ \int_{0}^Kd\wt{r}  (\varphi_a\ast\gamma_0)(s-\wt{r} )  \int_{\R} d\xi \varphi(\xi)  e^{-\i y\xi-\frac{\e}{2} \xi^2}  \F g(\wt{r}, -\xi)     \notag  \notag \\
  &  \xrightarrow{a\to 0} \int_{0}^Kd\wt{r} \gamma_0(s-\wt{r} )  \int_{\R} d\xi \varphi(\xi)  e^{-\i y\xi-\frac{\e}{2} \xi^2}  \F g(\wt{r}, -\xi) \label{DCThere} \\
   & \xrightarrow{\e\to 0}  \int_{0}^Kd\wt{r} \gamma_0(s-\wt{r} )  \int_{\R} d\xi \varphi(\xi)  e^{-\i y\xi}  \F g(\wt{r}, -\xi)  \label{convhere}
  \end{align}
    where we used \eqref{fact7} and the fact that $\varphi_a\ast\gamma_0$ converges in $L^1_{loc}(\R)$ to $\gamma_0$   to obtain \eqref{DCThere} and we applied the dominated convergence theorem in \eqref{convhere}. 
    
 Another application of dominated convergence together with Fubini's theorem leads to 
   \begin{align*}
  \E\big[ F  v(t, x)  \big]  
  =&\ 1 + \int_0^t \int_\R \E [ F  v(s, y)   ] G_{t-s}( x-y)  \int_{\R} d\wt{r} \gamma_0(s-\wt{r} )  \int_{\R} d\xi \varphi(\xi)  e^{-\i y\xi}  \F g(\wt{r}, -\xi)    dsdy \\
  =&\ 1  + \E F \int_0^t ds \int_{\R_+} d\wt{r} \gamma_0(s-\wt{r})   \int_{\R}d\xi  \varphi(\xi)  \F \big(v G_{t-\bullet}( x-\ast)\big)(s, \xi)  \F g(\wt{r}, -\xi)    \\
  =&\  1 + \E F \langle g, vG_{t-\bullet}( x-\ast) \rangle_\H = 1 +  \E  \langle DF, vG_{t-\bullet}( x-\ast) \rangle_\H, ~ (\text{as}~DF = F g),
     \end{align*}
 which confirms the convergence in \eqref{ast}.

 \medskip
 
Therefore,  we can conclude our proof by combining the above steps.   \hfill $\square$

\end{document}